\documentclass[12pt]{article}
\usepackage{amsmath, amssymb, amsthm, fullpage}
\usepackage{color}

\newtheorem{theorem}{Theorem}[section]
\newtheorem{lemma}[theorem]{Lemma}

\newtheorem{corollary}[theorem]{Corollary}

\newtheorem{proposition}[theorem]{Proposition}

\theoremstyle{definition}

\newtheorem{definition}[theorem]{Definition}

\theoremstyle{remark}

\newtheorem{problem}[theorem]{Problem}

\newcommand{\N}{{\mathbf{N}}}
\newcommand{\IPR}{\mathrm{IPR}}
\newcommand{\CZF}{\mathbf{CZF}}
\newcommand{\SIA}{\mathrm{SIA}}
\newcommand{\FIS}{\mathrm{FIS}}

\newcommand{\zero}{\mathbf{0}}
\newcommand{\one}{\mathbf{1}}
\renewcommand{\i}{\mathbf{i}}
\renewcommand{\t}{\mathbf{t}}
\newcommand{\bo}[1]{\boldsymbol{#1}}
\newcommand{\Type}{\mathbb{T}}
\newcommand{\arrowtype}[2]{{#2}^{#1}}
\newcommand{\FT}{\mathrm{FT}}
\newcommand{\Func}{\mathrm{Func}}
\newcommand{\REA}{\mathbf{REA}}
\newcommand{\T}{\mathbf{T}}
\newcommand{\IZF}{\mathbf{IZF}}
\newcommand{\MP}{\mathrm{MP}}

\newcommand{\appa}{\mathbf{APP}}
\newcommand{\appo}{\mathrm{App}}
\newcommand{\bpn}{\mathbf{p_0}}
\newcommand{\bpo}{\mathbf{p_1}}
\newcommand{\dfi}{:=}
\newcommand{\bk}{\mathbf{k}}
\newcommand{\bs}{\mathbf{s}}
\newcommand{\bd}{\mathbf{d}}
\newcommand{\bp}{\mathbf{p}}

\newcommand{\lando}{\land}
\newcommand{\then}{\to}
\newcommand{\proves}{\vdash}
\newcommand{\paar}[1]{\langle #1 \rangle}
\newcommand{\first}{1^{\it st}}
\newcommand{\second}{2^{\it nd}}
\newcommand{\po}{\mathcal{P}}
\newcommand{\vns}{\mathtt{V}^*}
\newcommand{\vvv}{\mathtt{V}}
\newcommand{\goa}{\mathfrak{a}}
\newcommand{\gob}{\mathfrak{b}}
\newcommand{\goc}{\mathfrak{c}}
\newcommand{\god}{\mathfrak{d}}
\newcommand{\goe}{\mathfrak{e}}
\newcommand{\gof}{\mathfrak{f}}
\newcommand{\gog}{\mathfrak{g}}
\newcommand{\goh}{\mathfrak{h}}
\newcommand{\goi}{\mathfrak{i}}
\renewcommand{\goh}{\mathfrak{h}}
\newcommand{\goj}{\mathfrak{j}}
\newcommand{\goao}{\goa^{\circ}}
\newcommand{\goas}{\goa^{*}}
\newcommand{\gobo}{\gob^{\circ}}
\newcommand{\gobs}{\gob^{*}}
\newcommand{\goco}{\goc^{\circ}}
\newcommand{\trel}{\Vdash_{rt}}

\newcommand{\pow}{\mathcal{P}}

\newcommand{\RE}{\mathcal{RE}}

\newcommand{\gok}{\mathfrak{k}}
\newcommand{\IP}{\mathrm{IP}}
\newcommand{\AC}{\mathrm{AC}_{\mathrm{FT}}}
\newcommand{\ACR}{\mathrm{AC}_{\mathrm{FT}}\mbox{-rule}}
\begin{document}
\title{The Independence of Premise Rule in Intuitionistic Set Theories}
\author{Takako Nemoto and Michael Rathjen\\
{\small {\it School of Information Science, Japan Advanced Institute of Science and Technology}}
   \\
{\small {\it Nomi, Ishikawa 923-1292, Japan}, t-nemoto@jaist.ac.jp}\\[1ex]
 {\small {\it School of Mathematics,
  University of Leeds}} \\
{\small {\it Leeds, LS2 9JT, UK}, M.Rathjen@leeds.ac.uk}}
\date{}
\maketitle

\begin{abstract} Independence of premise principles play an important role in characterizing the modified realizability and the Dialectica interpretations. In this paper we show that a great many intuitionistic set theories are closed under the corresponding independence of premise rule
for finite types over $\mathbb N$. It is also shown that the existence property (or existential definability property) holds for statements of the form 
$\neg A\to \exists x^{\sigma}F(x^{\sigma})$, where the variable $x^{\sigma}$ ranges over a finite type $\sigma$. This applies in particular to
  Constructive Zermelo-Fraenkel Set Theory ($\CZF$) and Intuitionistic Zermelo-Fraenkel Set Theory ($\IZF$),
 two systems known not to have the general existence property. 

On the technical side, the paper uses the method of realizability with truth from \cite{R05} and \cite{DR2} with the underlying partial combinatory algebra 
($pca$) chosen among the total ones. A particular instance of the latter is provided by the substructure of the graph model formed by the semicomputable subsets of $\mathbb N$, which has the advantage that it 
forms  a set $pca$ even in proof-theoretically weak set theories such as $\CZF$.
\\[1ex]
{\it Key words:} Intuitionistic set theory, Constructive set theory, independence of premise rule, independence of premise schemata\\ 
MSC 03B30 03F05 03F15 03F35 03F35
\end{abstract}

\section{Introduction}
There are (at least) three types of classically valid principles that figure prominently in constructive mathematics: {\em Choice in Finite Types} ($\AC$), {\em Markov's} ($\MP$), and the {\em Independence 
of Premise} ($\IP$) principle. All three are required for a characterization of G\"odel's Dialectica interpretation (see \cite[III.5]{troelstra73}, \cite[11.6]{troelstra.hb})
whereas modified realizability for intuitionistic finite-type arithmetic, $\mathbf{HA}^{\omega}$, is axiomatized by $\AC$ and $\IP$ alone (see 
\cite[III.4]{troelstra73}, \cite[3.7]{troelstra.hb}). To be more precise, we introduce the following schemata.
\begin{eqnarray*} \mathrm{IP}_{\mathrm{ef}} && (A\to \exists x^{\sigma}\,B(x))\to \exists x^{\sigma}(A\to B(x))\\[1ex]
\mathrm{AC}_{\mathrm{FT}} && \forall x^{\sigma}\exists y^{\tau}\,C(x,y)\to \exists z^{\sigma\tau}\forall x^{\sigma}\,C(x,zx)\end{eqnarray*}
where $\sigma$ signifies a finite type, $x^{\sigma}$ varies over $\sigma$, and $A$ is assumed to be $\exists$-free, i.e., $A$ neither contains existential quantifiers nor disjunctions.\footnote{Of course, it is also assumed that $x$ is not a free variable of $A$.}
$\mathbf{HA}^{\omega}$ satisfies the following.
\begin{theorem} With  $\Vdash_{mr}$ signifying modified realizability, we have: \begin{itemize} \item[(i)] $\mathbf{HA}^{\omega}+\AC+\IP_{ef}\vdash A\leftrightarrow \exists x\,(x\Vdash_{mr} A)$. \\[1ex]
\item[(ii)] $\mathbf{HA}^{\omega}+\AC+\IP_{ef}\vdash A \,\Leftrightarrow \,\mathbf{HA}^{\omega}\vdash t\Vdash_{mr} A$  for some term $t$.\end{itemize}
 \end{theorem}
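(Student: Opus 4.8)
The plan is to derive both parts from the \emph{Soundness Theorem} for modified realizability together with a single inductive \emph{characterization lemma}, the two being linked by the fact that realizability formulas are $\exists$-free. First I would record the clauses of $x \Vdash_{mr} A$ (prime formulas realize themselves with the empty tuple; pairing for $\wedge$ and $\exists$; a numerical tag for $\vee$; $z \Vdash_{mr} (A \to B) \equiv \forall x\,(x \Vdash_{mr} A \to zx \Vdash_{mr} B)$; and $z \Vdash_{mr} \forall y\,A \equiv \forall y\,(zy \Vdash_{mr} A)$). Two preliminary lemmas are then needed. \textbf{Lemma 1:} for \emph{every} formula $A$ the formula $x \Vdash_{mr} A$ is $\exists$-free (immediate induction on $A$, since no clause introduces $\exists$ or $\vee$; the $\exists$-clause reads off the witness as a projection). \textbf{Lemma 2:} whenever $A$ is itself $\exists$-free, $\mathbf{HA}^{\omega}$ proves $(x \Vdash_{mr} A) \leftrightarrow A$ for every $x$ of the appropriate type, and since each finite type is inhabited by a closed term there is a canonical closed realizer $t_A$ with $\mathbf{HA}^{\omega} \vdash A \leftrightarrow (t_A \Vdash_{mr} A)$.

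Next I would prove part (i), that $T \vdash A \leftrightarrow \exists x\,(x \Vdash_{mr} A)$ for $T = \mathbf{HA}^{\omega} + \AC + \IP_{\mathrm{ef}}$, by induction on $A$. The atomic, $\wedge$, $\vee$ and $\exists$ cases go through in $\mathbf{HA}^{\omega}$ alone by unwinding the clauses and using pairing. The two interesting cases are $\forall$ and $\to$. For $\forall y\,A(y)$ the induction hypothesis gives $\forall y\,A(y) \leftrightarrow \forall y\,\exists x\,(x \Vdash_{mr} A(y))$, and one use of $\AC$ pulls the existential out to yield $\exists z\,\forall y\,(zy \Vdash_{mr} A(y))$. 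For $A \to B$ the forward direction is where both extra axioms enter: from $A \to B$ the induction hypothesis gives $\forall x\,\big((x \Vdash_{mr} A) \to \exists y\,(y \Vdash_{mr} B)\big)$; since $x \Vdash_{mr} A$ is $\exists$-free by Lemma 1, $\IP_{\mathrm{ef}}$ turns this into $\forall x\,\exists y\,\big((x \Vdash_{mr} A) \to (y \Vdash_{mr} B)\big)$, and $\AC$ then supplies the required realizing function $z$. The converse direction in each case needs only the induction hypothesis.

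For part (ii), the $\Leftarrow$ direction is immediate: if $\mathbf{HA}^{\omega} \vdash t \Vdash_{mr} A$ then a fortiori $T \vdash \exists x\,(x \Vdash_{mr} A)$, whence $T \vdash A$ by part (i). The substantive direction is the Soundness Theorem, proved by induction on the $T$-derivation of $A$: I would exhibit closed realizing terms for the logical and arithmetical axioms of $\mathbf{HA}^{\omega}$ (induction being realized by the corresponding recursor) and check that the inference rules transform realizers into realizers, thereby extracting the final closed term $t$. The crux is realizing the two non-logical schemata. For $\AC$ the realizer merely splits a given $f$ into its projections, $\lambda f.\langle \lambda x.\,p_0(fx),\ \lambda x.\,p_1(fx)\rangle$. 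For $\IP_{\mathrm{ef}}$, where the premise $A$ is $\exists$-free, one feeds the canonical realizer $t_A$ of Lemma 2 into a realizer $f$ of $A \to \exists x\,B$, reads off the witness $p_0(f\,t_A)$, and returns $\lambda f.\langle p_0(f\,t_A),\ \lambda u.\,p_1(f\,t_A)\rangle$; correctness follows because Lemma 2 gives $u \Vdash_{mr} A \leftrightarrow A \leftrightarrow t_A \Vdash_{mr} A$.

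I expect the main obstacle to be the $\IP_{\mathrm{ef}}$ step of the Soundness Theorem: it is the only point where the $\exists$-free restriction is essential and where inhabitation of the finite types (to obtain $t_A$) must be invoked, and where one must correctly decouple the existential witness from its dependence on the realizer of the premise. The $\to$-case of part (i) is the mirror-image difficulty, being the unique place at which $\AC$ and $\IP_{\mathrm{ef}}$ are needed simultaneously and where Lemma 1 is indispensable. Everything else reduces to routine verification of the realizability clauses.
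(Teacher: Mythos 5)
Your proposal is correct, and it follows exactly the proof that the paper points to: the paper gives no proof of this theorem at all, stating it as known background with a citation to Troelstra (III.4 of the 1973 Lecture Notes, 3.7 of the Handbook chapter), where precisely your argument appears. Both halves of your proof --- the characterization (i) by induction on $A$, with $\AC$ absorbing the $\forall$-case and $\IP_{\mathrm{ef}}$ together with the $\exists$-freeness of $x\Vdash_{mr}A$ (your Lemma 1) handling $\to$, and the soundness direction of (ii) with realizers for $\AC$ and $\IP_{\mathrm{ef}}$ built from the self-realization of $\exists$-free formulas (your Lemma 2) --- are the standard components of that proof, and your realizing terms for the two schemata are the correct ones.
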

 An important application of modified realizability is that $\mathbf{HA}^{\omega}$ is closed under the independence of premise rule
 for $\exists$-free formula, $\mathrm{IPR}_{\mathrm{ef}}$, and also satisfies explicit definability, $\mathrm{ED}^{\sigma}$.
 
 \begin{theorem}\label{haupt}\begin{itemize}
 \item[(i)] If $\mathbf{HA}^{\omega}\vdash A\to \exists x^{\sigma}B(x)$, then $\mathbf{HA}^{\omega}\vdash \exists x^{\sigma}(A\to B(x))$,
 when $A$ is $\exists$-free.
 \item[(ii)] If $\mathbf{HA}^{\omega}\vdash \exists x^{\sigma}C(x)$, then $\mathbf{HA}^{\omega}\vdash C(t)$ for a suitable term $t$.
 \end{itemize}
 \end{theorem}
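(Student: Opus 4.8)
The plan is to obtain both items from the soundness of modified realizability in its \emph{truth-preserving} form, applied over $\mathbf{HA}^{\omega}$ itself. The characterization recorded in Theorem~1 is stated over the extension $\mathbf{HA}^{\omega}+\AC+\IP_{ef}$, so a plain appeal to $\Vdash_{mr}$ would deposit the conclusions only in that extension; to reach $\mathbf{HA}^{\omega}$ I would instead work with the refinement $x\Vdash A$ in which truth is threaded through every clause, so that (a) $\mathbf{HA}^{\omega}\vdash (x\Vdash A)\to A$ for all $A$, and whose soundness reads: if $\mathbf{HA}^{\omega}\vdash A$ then $\mathbf{HA}^{\omega}\vdash t\Vdash A$ for some term $t$ whose free variables lie among the parameters of $A$. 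The second ingredient is (b): for $\exists$-free $A$ there is a canonical term $\mathbf{n}$ of the type associated with $A$ satisfying $\mathbf{HA}^{\omega}\vdash A\leftrightarrow\mathbf{n}\Vdash A$, expressing that $\exists$-free formulas carry no computational content.

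For part (ii), assume $\mathbf{HA}^{\omega}\vdash\exists x^{\sigma}C(x)$ with parameters $\vec z$. Soundness gives a term $r$ with $\mathbf{HA}^{\omega}\vdash r\Vdash\forall\vec z\,\exists x^{\sigma}C(x)$. Unwinding the $\forall$- and $\exists$-clauses, the term $t:=\bpn(r\vec z)$ has type $\sigma$ and $\mathbf{HA}^{\omega}\vdash\bpo(r\vec z)\Vdash C(t)$; the truth property (a) then yields $\mathbf{HA}^{\omega}\vdash C(t)$, which is the required explicit witness.

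For part (i), assume $\mathbf{HA}^{\omega}\vdash A\to\exists x^{\sigma}B(x)$ with $A$ $\exists$-free, and let $r$ be a term with $\mathbf{HA}^{\omega}\vdash r\Vdash(A\to\exists x^{\sigma}B(x))$. Set $t:=\bpn(r\mathbf{n})$, with $\mathbf{n}$ the canonical realizer from (b). Reasoning in $\mathbf{HA}^{\omega}$: assuming $A$, (b) gives $\mathbf{n}\Vdash A$, so by the $\to$-clause $r\mathbf{n}\Vdash\exists x^{\sigma}B(x)$, hence $\bpo(r\mathbf{n})\Vdash B(t)$, and (a) delivers $B(t)$. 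Thus $\mathbf{HA}^{\omega}\vdash A\to B(t)$, and therefore $\mathbf{HA}^{\omega}\vdash\exists x^{\sigma}(A\to B(x))$.

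The two extractions above are routine once the interpretation is in hand, so the real work—and the main obstacle—is the soundness theorem for $\Vdash$ together with the truth property (a): one must verify clause by clause that each axiom and rule of $\mathbf{HA}^{\omega}$, in particular the induction schema and the defining equations of the combinators entering $t$, is realized \emph{with truth}, and that the truth property propagates through $\to$ and $\forall$ while leaving intact the computational content consumed at the existential step. The $\exists$-free lemma (b) is the other delicate point, since $\mathbf{n}$ must be produced uniformly in the type of $A$ and shown to track truth in both directions.
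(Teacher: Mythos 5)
Your proof is correct and follows essentially the approach the paper intends: the paper states this theorem without proof, citing Troelstra, and the standard argument there is exactly your modified-realizability-with-truth proof (soundness with the truth property threaded through the clauses, self-realization of $\exists$-free formulas, and extraction of the witnessing term using totality of the terms of $\mathbf{HA}^{\omega}$). It is also the same template the paper itself deploys for its set-theoretic analogue in Theorem \ref{theorem}, where Lemma \ref{Q1.3} plays the role of your truth property (a), Lemma \ref{negself} plays the role of your self-realization property (b), and totality of the chosen pca replaces totality of the finite-type terms.
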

 
 This paper shows that results similar to Theorem \ref{haupt} hold for a great  many set theories $T$, including 
   Constructive Zermelo-Fraenkel Set Theory ($\CZF$) and Intuitionistic Zermelo-Fraenkel Set Theory ($\IZF$). 

\begin{theorem}\label{Main} Let $\sigma$ be a finite type on $\mathbb N$. Below $\sigma$ denotes the corresponding set. We also assume that $T$ 
proves the existence of each finite type (as a set).
\begin{itemize}
\item[(i)] If $T\vdash \forall x\,[\neg A(x) \to \exists y\in \sigma\, B(x,y)]$, then 
$$T\vdash \exists y\,\forall x\,[\neg A(x) \to  y\in \sigma \,\wedge\, B(x,y)].$$
\item[(ii)] If $T\vdash \forall x\,[\forall u\,D(u) \to \exists y\in \sigma\, B(y)]$ and $T\vdash \forall u\,[D(u)\,\vee\,\neg D(u)]$, then 
$$T\vdash \exists y\,[\forall u\,D(u) \to y\in \sigma\,\wedge\, B(y)].$$
\item[(iii)] If $T\vdash \exists x\in \sigma\,C(x)$, then $$T\vdash \exists! x\in \sigma\,[C(x)\,\wedge\,E(x)]$$ for some formula $E(x)$.
\end{itemize}
\end{theorem}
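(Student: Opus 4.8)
The plan is to run the whole argument through the realizability-with-truth interpretation $\trel$ over the total $\pca$ the paper has fixed (the semicomputable-subsets model), exploiting the fact that this interpretation can be formalized inside $T$ itself. The first thing I would invoke, since it is the technical heart of the paper, is an internal soundness theorem of the shape: whenever $T\proves A$, one can read off a closed $\pca$-term $t$ with $T\proves t\trel A$. Because the interpretation carries truth, $e\trel B$ always entails $B$, so the conclusions extracted below land back in $T$ without any auxiliary axioms; this is precisely what turns the schemata into rules.

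For part (i), apply soundness to the hypothesis to obtain a term $t$ with $T\proves t\trel\forall x\,[\neg A(x)\then\exists y\in\sigma\,B(x,y)]$. I would then use two structural features of the clauses. First, the universal set quantifier is realized uniformly: a single realizer works for every $x$, so $t$ itself does not depend on $x$. Second, and crucially, a true negation has a canonical realizer: since $e\trel A(x)$ forces $A(x)$ to hold, and $\neg A(x)$ makes $A(x)$ fail, the implication clause for $\neg A(x)=A(x)\then\bot$ is satisfied vacuously by any element, in particular by a fixed default $d_0$. Hence for every $x$ with $\neg A(x)$ true, the fixed element $t\cdot d_0$ realizes $\exists y\in\sigma\,B(x,y)$. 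The final ingredient is that the realizability clause for a bounded existential over a finite type $\sigma$ encodes its witness numerically: $\bpn(t\cdot d_0)$ decodes to a specific, $x$-independent element $y^*\in\sigma$ with $\bpo(t\cdot d_0)\trel B(x,y^*)$, and truth then yields $B(x,y^*)$. Since $y^*$ is definable and independent of $x$, this gives $T\proves\exists y\,\forall x\,[\neg A(x)\then y\in\sigma\wedge B(x,y)]$.

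Part (ii) is the same argument with the negative premise replaced by a decidable universal statement. The only new point is supplying a canonical realizer for a true premise $\forall u\,D(u)$: using $T\proves\forall u\,[D(u)\vee\neg D(u)]$, decidability furnishes, for each $u$ with $D(u)$, a canonical realizer of $D(u)$, and uniformity of the universal clause packages these into one fixed element $d_1$ realizing $\forall u\,D(u)$ whenever it holds. Feeding $d_1$ into the (again $x$-uniform) realizer of the implication and extracting the witness exactly as before delivers the conclusion. For part (iii), soundness gives a term $r$ with $T\proves r\trel\exists x\in\sigma\,C(x)$; the bounded-existential clause extracts a definable witness $y^*:=\bpn r\in\sigma$ with $C(y^*)$ holding by truth. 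Setting $E(x):=(x=y^*)$ makes the conjunction $C(x)\wedge E(x)$ hold uniquely at $x=y^*$, yielding $T\proves\exists!\,x\in\sigma\,[C(x)\wedge E(x)]$.

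The step I expect to be the main obstacle is not any of these extractions but the groundwork they rest on: setting up $\trel$ so that (a) the $\pca$ is genuinely a set and the whole interpretation is formalizable even in a weak theory like $\CZF$, and (b) the bounded-existential-over-$\sigma$ clause really does expose the witness as a $T$-definable element of the finite type, so that ``decode $\bpn(t\cdot d_0)$'' is a legitimate definition inside $T$. Pinning down the coding between $\pca$-elements and elements of the finite type $\sigma$, and checking that soundness respects it, is where the real work lies; the independence-of-premise phenomenon itself is then essentially the one-line observation that realizers of true negative (or decidable) premises are canonical and therefore carry no information on which the witness could depend.
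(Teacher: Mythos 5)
Your overall strategy coincides with the paper's: formalize realizability-with-truth over a total set $pca$ inside $T$ (Proposition \ref{soundness}), use the fact that true negations are self-realizing with the canonical realizer $\mathbf{0}$ (Lemma \ref{negself}), exploit the uniformity of the unbounded universal clause, extract the witness from the bounded-existential clause over a name of the finite type, and for (iii) take $E(x):\equiv(x=y^*)$. However, there are two genuine gaps. The first is the step you yourself flag as ``where the real work lies'' and then leave undone. For an arbitrary name $\goa\in\vns_{\mathcal{A}}$ of the set $\bo{\sigma}$, the clause $e\trel\exists y\in\goa\,B$ asserts only $\exists\gob\,[\langle(e)_0,\gob\rangle\in\goa^*\,\wedge\,(e)_1\trel B[y/\gob]]$; nothing forces $\gob$ to be determined by the index $(e)_0$, so ``decode $\bpn(t\cdot d_0)$'' is not a definition at all unless the name of $\bo{\sigma}$ has a single-valued, injectively indexed graph together with canonical realizers for internal equality --- the paper's property $\Psi(\underline{\bo{\sigma}})$. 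Establishing $\Psi$ is the technical heart of the paper, proved by induction on types, and the function-type case (Lemma \ref{mainlemma}, feeding into Lemma \ref{type1}) occupies most of Section 7: one must build a name of $\Func(\goa,\gob)$ whose internal elements are indexed by $pca$-elements acting extensionally, and verify internally that it is extensionally equal to the full function space. Identifying this obstacle is not the same as overcoming it, and your proposal contains no idea for how to carry it out.

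The second gap is an actual error rather than an omission: your witness extraction breaks down when the premise is vacuous. If $\neg A(x)$ is never true, totality still makes $t\cdot d_0$ an element of $|\mathcal{A}|$, but it then realizes nothing, $(t\cdot d_0)_0$ need not index anything in $(\underline{\bo{\sigma}})^*$, and your ``specific element $y^*\in\sigma$'' does not exist; note that the theorem places $y\in\sigma$ \emph{inside} the implication precisely because membership in $\sigma$ cannot be guaranteed outright. Nor can you repair this by casing on whether decoding succeeds, since that case distinction is an instance of excluded middle unavailable in $\CZF$. The paper's device is to define the candidate witness unconditionally as a union, $\goc:=\langle\,\bigcup\{\gob^{\circ}:\langle(t\mathbf{0})_0,\gob\rangle\in(\underline{\bo{\sigma}})^*\},\ \bigcup\{\gob^*:\langle(t\mathbf{0})_0,\gob\rangle\in(\underline{\bo{\sigma}})^*\}\,\rangle$, a set that always exists and which, by the single-valuedness in $\Psi$, coincides with the unique decoded witness whenever the premise is true (and is harmless junk otherwise). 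The same uniqueness is what makes $E(x)$ in (iii) expressible as a formula of $T$. Your treatment of (ii) is acceptable in spirit (the paper instead case-splits on the fixed first projection $(s)_0\in\{\mathbf{0},\mathbf{1}\}$ of the realizer of the decidability hypothesis, using Lemma \ref{0} for a dummy witness in the second case), but it inherits both defects above: without $\Psi$ and without the unconditional union construction, the ``fixed element'' you feed forward does not yield a $T$-definable witness.
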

It is known that $\IZF$ and $\CZF$ have the numerical existence property (see \cite{R05}), so Theorem \ref{Main}(iii) extends this property to a larger collection of existential formulas. On the other hand it is known by
 work of H.~Friedman and S.~\v{S}\v{c}edrov \cite{frsc} that $\IZF$ does not have  the  general existence property ($\mathrm{EP}$)   while A. Swan \cite{swan} proved that $\mathrm{EP}$ also fails for $\CZF$.
 
 On the technical side, the paper uses the method of realizability with truth from \cite{R05} and \cite{DR2} with the underlying partial combinatory algebra 
($pca$) chosen among the total ones. A particular instance of the latter is provided by the substructure of the graph model formed by the semicomputable subsets of $\mathbb N$, which has the advantage that it 
forms  a set $pca$ even in proof-theoretically weak set theories such as $\CZF$.
 
\section{The system $\CZF$}
\subsection{Axioms of constructive set theories}
The language of constructive Zermelo-Fraenkel Set Theory $\CZF$ is same first order
language as that of classical Zermelo-Fraenkel Set Theory $\mathbf{ZF}$ whose only
non-logical symbol is the binary predicate $\in$.
We use $u$, $v$, $w$, $x$, $y$ and $z$, possiblly with superscripts, for variables in the
language of $\CZF$. 
The logic of $\CZF$ is intuitionistic first order logic with equality.

The axioms of $\CZF$ are as follows:
\begin{description}
 \item[Extensionality:] $\forall x\forall y(\forall z(z\in x\leftrightarrow z\in y)\to x=y)$.
 \item[Pairing:] $\forall x\forall y\exists z(x\in z\land y\in z)$.
 \item[Union:] $\forall x\exists y\forall z( (\exists w\in x)(z\in w)\to z\in y)$. 
 \item[Infinity:] $\exists x\forall y[y\in x\leftrightarrow \forall z(z\in
	    y\leftrightarrow \bot)\lor (\exists w\in x)\forall v(v\in y\leftrightarrow
	    v\in w\lor v=w)]$. 
 \item[Set Induction:] For any formula $\varphi$,
	    $\forall x[(\forall y\in x)\varphi(y)\to \varphi(x)]\to \forall
	    x\varphi(x)$.
 \item[Bounded Separation:] $\exists x\exists y\forall z[z\in y\leftrightarrow z\in x\land
	    \varphi(z)]$, for any {\it bounded} formule $\varphi$. A formula is {\it bounded} or
	    {\it restricted} if it is constructed from prime formulae using $\to$, $\neg$,
	    $\land$, $\lor$, $\forall x\in y$ and $\exists x\in y$ only. 
 \item[Strong Collection:] For any formula $\varphi$,
	    \begin{align*}
	    \forall x[(\forall y\in x)\exists z\varphi(y,z)\to 
	    \exists w[(\forall y\in x)(\exists z\in w)\varphi(y,z)\land (\forall z\in
	    w)(\exists y\in x)\varphi(y,z)]].
	    \end{align*}
 \item[Subset Collection:] For any formula $\varphi$,
	    \begin{multline*}
	    \forall x\forall y\exists z\forall u[(\forall v\in x)(\exists w\in
	     y)\varphi(v,w,u)\to \\
	     (\exists y'\in z)[(\forall v\in x)(\exists w\in y')\varphi(v,w,u)\land(\forall
	     w\in y')(\exists v\in x)\varphi(v,w,u)]].
	    \end{multline*}
\end{description}
In what follows, we shall assume that the language $\CZF$ has constants $\emptyset$
denoting the {\it empty set}, 
$\omega$ denoting the set of von Neumann natural numbers.
One can take the axioms $\forall x(x\in\emptyset\leftrightarrow \bot)$ for $\emptyset$
and $\forall u[u\in\omega\leftrightarrow (u=\emptyset\lor(\exists v\in\omega)\forall
w(w\in u\leftrightarrow w\in v\land w=v))]$ for $\omega$.
We write $x+1$ for $x\cup\{ x\}$ and use $l$, $m$, and $n$ for elements of $\omega$.

\begin{lemma}\label{mathematicalinduction}
 $\CZF$ proves the following Full Mathematical Induction Schema for $\omega$:
 \begin{align*}
  \varphi(\emptyset)\land \forall x\in\omega(\varphi(x)\to \varphi(x+1))\to \forall
  x\in\omega\varphi(x). 
 \end{align*}
\end{lemma}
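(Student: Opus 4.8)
The plan is to derive the schema from the Set Induction schema of $\CZF$, which holds for arbitrary formulas, by exploiting the single feature that distinguishes von Neumann naturals: the immediate predecessor of a successor $v+1=v\cup\{v\}$ is itself an $\in$-element of $v+1$. First I would fix $\varphi$, assume the base case $\varphi(\emptyset)$ and the step $\forall x\in\omega\,(\varphi(x)\to\varphi(x+1))$, and introduce the auxiliary formula $\psi(x):\equiv(x\in\omega\to\varphi(x))$. Since $\forall x\,\psi(x)$ immediately yields the desired $\forall x\in\omega\,\varphi(x)$, it suffices to verify the premise of Set Induction for $\psi$, namely $\forall x\,[(\forall y\in x)\psi(y)\to\psi(x)]$, and then invoke Set Induction.

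To check this premise I would fix $x$, assume $(\forall y\in x)\psi(y)$, further assume $x\in\omega$ (the antecedent of $\psi(x)$), and show $\varphi(x)$. The defining axiom for $\omega$ characterizes its members as being either $\emptyset$ or a successor $v+1$ of some $v\in\omega$, so I split into these two cases. If $x=\emptyset$, then $\varphi(x)$ is the base hypothesis. If $x=v+1$ for some $v\in\omega$, I would note that $v\in\{v\}\subseteq v\cup\{v\}=x$, hence $v\in x$; the hypothesis $(\forall y\in x)\psi(y)$ then gives $\psi(v)$, i.e. $v\in\omega\to\varphi(v)$, and since $v\in\omega$ we obtain $\varphi(v)$. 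Applying the step hypothesis at $v$ yields $\varphi(v+1)$, which is $\varphi(x)$. This establishes the Set Induction premise, and Set Induction delivers $\forall x\,\psi(x)$, completing the argument.

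There is little genuine difficulty here; the single point requiring care -- and the conceptual heart of the reduction -- is the passage from $\in$-induction to successor-induction, which rests entirely on the observation that $v\in v+1$. One should also be mildly careful with the book-keeping around the defining axiom for $\omega$: it must genuinely provide the disjunction ``$x=\emptyset$ or $x$ is the successor of some member of $\omega$'', and one uses Extensionality to identify the set $u$ satisfying $\forall w(w\in u\leftrightarrow w\in v\lor w=v)$ with $v+1$, so that $\varphi$ transfers along the equality $x=v+1$. No appeal to Collection, Separation, or the strength of the ambient theory beyond Set Induction and the axiom for $\omega$ is needed, which is why the schema is available already in $\CZF$.
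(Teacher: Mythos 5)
Your proof is correct and takes exactly the route the paper intends: the paper's own proof consists of the single remark ``Immediate from Set Induction,'' and your argument---applying Set Induction to $\psi(x):\equiv(x\in\omega\to\varphi(x))$, splitting via the defining axiom for $\omega$ into the cases $x=\emptyset$ and $x=v+1$, and using $v\in v+1$ to extract the induction hypothesis---is precisely the spelled-out version of that reduction.
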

\begin{proof}
Immediate from Set Induction. 
\end{proof}

\begin{lemma}
 $\CZF$ proves the following Full Iteration Scheme $\FIS$:
 For any class $A$, any class function $F:A\to A$ and a set $x\in A$, there uniquely
 exists a set function $v:\omega\to A$ such that $v(\emptyset)=x$ and $v(u+1)=F(v(u))$.
\end{lemma}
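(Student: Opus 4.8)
The plan is to build $v$ as the union of a coherent family of finite approximations, following the usual proof of the recursion theorem but taking care that the ambient theory is only $\CZF$ and that $A$ and $F$ are classes. Call a set $g$ an \emph{$n$-approximation} (for $n\in\omega$) if $g$ is a function with $\mathrm{dom}(g)=n+1$, $\mathrm{ran}(g)\subseteq A$, $g(\emptyset)=x$, and $g(m+1)=F(g(m))$ whenever $m+1\in\mathrm{dom}(g)$. Here $A$ and the graph of $F$ enter only through the defining formula of this predicate, so ``$g$ is an $n$-approximation'' is a bona fide formula of the language with the same parameters as $A$ and $F$.

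First I would establish, by the Full Mathematical Induction Schema of Lemma \ref{mathematicalinduction}, that for every $n\in\omega$ there exists an $n$-approximation: the base case is $g=\{\langle\emptyset,x\rangle\}$, and from an $n$-approximation $g$ one obtains an $(n+1)$-approximation by adjoining the pair $\langle n+1, F(g(n))\rangle$, which is legitimate because $g(n)\in A$ and $F\colon A\to A$ is total on $A$. A second induction gives \emph{coherence}: if $g$ is an $n$-approximation and $g'$ an $n'$-approximation with $n\le n'$, then $g\subseteq g'$; equivalently, any two approximations agree wherever both are defined, and for each $n$ the $n$-approximation is unique. Having both existence and uniqueness of the $n$-approximation for each $n$, I would invoke Strong Collection (with this functional predicate, instantiated at $\omega$) to gather the approximations into a single set $w$ whose members are exactly the $n$-approximations for $n\in\omega$. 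Then $v\dfi\bigcup w$ is a set by Union. Coherence guarantees that $v$ is single-valued, hence a function; since $n\in\mathrm{dom}(g_n)$ for the $n$-approximation $g_n$, the domain of $v$ is all of $\omega$ and its range lies in $A$, while the equations $v(\emptyset)=x$ and $v(u+1)=F(v(u))$ are read off from $g_{u+1}$. Finally, uniqueness of $v$ follows from one more application of Full Induction: any two solutions agree at $\emptyset$, and the recursion equation propagates agreement to each successor, so they coincide as sets of ordered pairs by Extensionality.

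I expect the only genuinely delicate point to be the passage from pointwise existence to a \emph{set} of approximations. Because $A$ and $F$ are proper classes, one cannot simply separate the family of approximations out of some ambient set; it is essential that each individual $n$-approximation is itself a set (a function with finite domain) and that the predicate is functional in $n$, so that Strong Collection applies and yields precisely the instance of Replacement that is needed. Everything else is routine induction together with the verification that the union behaves as required.
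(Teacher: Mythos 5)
Your proposal is correct and takes essentially the same route as the paper: both build finite approximations to $v$, prove their existence and uniqueness for each $n\in\omega$ by the full induction schema, and then invoke Strong Collection to gather them into a set from which $v$ is extracted. The paper's proof is simply a terser rendering of this argument (packaging the approximation predicate into a single formula $\varphi(y,z)$ and leaving the union, coherence, and final verification steps implicit), whereas you spell them out.
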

\begin{proof}
 Assume that $A$ is a class, $F$ is a class function from $A$ to $A$, and $x\in A$.
 Define the following $\varphi(y,z)$:
 \begin{multline*}
  \varphi(y,z)\equiv y\in\omega\land z\in \mathrm{func}(y,A)\land (\emptyset\in y\to \langle
  \emptyset,x\rangle\in z)\land \\
  \forall u\in \omega\forall w(\langle u,w\rangle\in z\land u+1\in y\to 
  \langle u+1,F(w)\rangle\in z),
 \end{multline*}
 where $z\in\mathrm{func}(y,A)$ is
 \begin{align*}
  \forall u\in y\exists! w\in A(\langle u,w\rangle\in z)\land z\subseteq y\times A. 
 \end{align*}
 By Lemma \ref{mathematicalinduction}, we have that $(\forall y\in\omega)\exists!
 z\varphi(y,z)$.
 By Strong Collection, we have the desired function $h$.
\end{proof}

We consider also several extensions of $\CZF$.
\begin{description}
 \item[Full Separation] $\exists x\exists y\forall z[z\in y\leftrightarrow z\in x\land
	    \varphi(z)]$, for any formule $\varphi$.
 \item[Powerset] $\forall x\exists y\forall z(z\subseteq x\to z\in y)$.
\end{description}
The system $\CZF+(\mathbf{Full\;separation})+(\mathbf{Powerset})$ is called $\IZF$
(cf. \cite{myhille} or \cite[VIII.1]{beeson}). 
\begin{description}
 \item[MP {(Markov's principle)}] $\forall
	    n\in\omega(\varphi(n)\lor\neg\varphi(n))\land
	    \neg\neg\exists n\in\omega\varphi(n)\to\exists n\in\omega\varphi(n)$.
 \item[$\mathbf{AC}_{\omega}$ {(Axiom of Countable Choice)}] \quad \\
	    $u\in \mathrm{func}(\omega, V)\land \forall n\in\omega \exists x(x\in u(n))\to
	    \exists v(v\in\mathrm{func}(\omega, V)\land \forall n\in\omega(v(n)\in u(n)))$,
	    where $V$ is the class $\{ x: x=x\}$ and $v(n)$ is the unique $y$ such that
	    $\langle n,y\rangle\in v$.
 \item[$\mathbf{DC}$  (Dependent Choices Axiom)] \quad \\
	    $v\subseteq u\times u\land(\forall x\in
	    u)(\exists y\in u) (\langle x,y\rangle\in v)\to$ \\
	    \hfill $(\forall x\in u)\exists
	    w(w\in\mathrm{func}(\omega, u)\land w(0)=x\land (\forall n\in\omega)(\langle
	    w(n), w(n+1)\rangle\in v))$.
 \item[$\mathbf{RDC}$ (Relativised Dependent Choice Axiom)]\quad \\
	    $\forall x(\varphi(x)\to\exists y(\varphi(y)\land \psi(x,y)))\to $ \\
	    \hfill $\forall x[\varphi(x)\to \exists v[v\in\mathrm{func}(\omega,
	    \{y:\varphi(y)\})\land v(0)= x\land 
	    (\forall n\in\omega)\psi(v(n), v(n+1))]]$.
\end{description}

\begin{definition}
 A set $x$ is inhabited if $\exists y(y\in x)$.
 An inhabited set $x$ is {\it regular} if $x$ is transitive, and for every $y\in x$ and a
 set $z\subseteq y\times x$ if $(\forall u\in y)\exists v(\langle u,v\rangle\in z)$, then
 there is a set $w\in x$ such that
 \begin{align*}
  \forall u\in y\exists v\in w(\langle u,v\rangle\in z)\land \forall v\in w\forall u\in
  y(\langle u,v\rangle\in z).
 \end{align*}
 The {\it regular extension axiom} $\REA$ is as follows: {\it Every set is a subset of a
 regular set.}
\end{definition}

\begin{definition}
 A set $x$ is {\it projective} if for any $x$-indexed family $(y_u)_{u\in x}$ of inhabited
 sets $y_u$, there exists a function $v$ with domain $x$ such that $v(u)\in y_u$ for all $u\in x$.
 The {\bf Presentation Axiom}, $\mathbf{PAx}$, is the statement that every
 set is the surjective image of a projective set.
\end{definition}

\subsection{Finite types in $\CZF$}
The type structure $\Type$ is defined inductively, outside of the language of $\CZF$, by
the following two clauses: 
\begin{align*}
 & 0\in \Type  &
 & \text{If $\sigma,\tau\in \Type$,
 then $\arrowtype{\sigma}{\tau}\in\Type$}.
\end{align*}
We use lower Greek symbols $\rho$, $\sigma$ and $\tau$ for variables varies type structure.

In $\CZF$, we can simulate the type structure
by fixing a primitive recursive bijection
$\wp:\omega\to \{\emptyset\}\cup\omega\times\omega$ 
such that $\wp(\emptyset)=\emptyset$ and
$\wp(n)=\langle m, l\rangle\to \max\{ m, l\}<n$.
We use $\wp(\emptyset)=\emptyset$ as the code for $0$ and $\wp(n)$ for 
$\arrowtype{\sigma}{\tau}$ 
when $\wp(n)=\langle m, l\rangle$ and $\wp(m)$ and $\wp(l)$ are codes for 
$\sigma$ and $\tau$, respectively.
We do not distinguish $\sigma$ and $n$ such that $\wp(n)$ is a code for $\sigma$,
if it does not cause any confusion.

For sets $x$, $y$ and $z$, let $x\in\Func(y,z)$ be an abbreviation for
``$x$ is a function from $y$ to $z$'', i.e., 
$x\subseteq y\times z\land \forall u\in y\exists v\in z(\langle y,z\rangle\in x)$.

By $\FIS$, we have the set $\FT$ such that
\begin{align*}
 & \{ \langle 0,x\rangle:x\in\omega\}\subseteq\FT, \quad
 \forall x(x\in\FT\to\exists \sigma\exists y(x=\langle \sigma,y\rangle)),& \\
 & \forall x[\langle\arrowtype{\sigma}{\tau}, x\rangle\in\FT\leftrightarrow
 x\in
 \{ y:\langle \sigma,y\rangle\in\FT\},
 \{ z:\langle \tau, z\rangle\in \FT\})] 
\end{align*}

For each finite type $\rho$, $\sigma$ and $\tau$,
let $\bo{\rho}$, $\bo{\sigma}$ and $\bo{\tau}$ be sets of the elements with the type,
i.e., $\bo{\rho}=\{ x:\langle \rho,x\rangle\in\FT\}$, etc..
Define $\phi(\sigma,x)$ as follows:
  \begin{multline*}
   \phi(\sigma,x)\equiv
   \exists y[y=\mathrm{func}(\sigma+1,V)\land
   \langle 0,\omega\rangle\in y\land \\
   \forall \tau,\rho(\tau^{\rho}\leq \sigma\land
   \langle \tau,u \rangle\in y\land\langle \rho,v\rangle\in y\to
   \langle \rho^{\tau},v^u\rangle\in y)\land \langle \sigma,x\rangle\in y].
  \end{multline*}
  \begin{lemma}[$\CZF$]\label{type}
 For each finite type $\sigma$, we have the following:
 that
\begin{align*}
 & \exists x(x=\bo{\sigma}\leftrightarrow \phi({\sigma},x)) &
 & \text{and} &
 & \forall x\forall y(\phi({\sigma},x)\land\phi({\sigma},y)\to x=y).
\end{align*}
  \end{lemma}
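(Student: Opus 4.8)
The plan is to prove existence and uniqueness by induction on the type code $\sigma$, reading $\phi(\sigma,x)$ as the assertion that $x$ is the value at $\sigma$ of a function $y$ whose domain is $\sigma+1=\{0,1,\dots,\sigma\}$ (the set of all type codes $\le\sigma$), which sends the base type $0$ to $\omega$ and each function type $\arrowtype{\tau}{\rho}\le\sigma$ to the set of all functions from $y(\tau)$ to $y(\rho)$. The feature of the coding $\wp$ that drives the induction is that $\wp(n)=\langle m,l\rangle$ forces $m,l<n$, so the two component types of a function type always carry strictly smaller codes and hence already lie in the domain of whatever approximation has been built so far.

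For the existence half I would realize $y$ as a finite approximation obtained from the Full Iteration Scheme $\FIS$. Let $A$ be the class of functions $g$ whose domain is an initial segment $\{0,\dots,n\}$ of $\omega$ with $g(0)=\omega$ and with $g$ respecting the function-space clause on its domain, and define a class function $F\colon A\to A$ by $F(g)=g\cup\{\langle n+1,\,(g(l))^{g(m)}\rangle\}$, where $n=\max(\mathrm{dom}\,g)$, $\wp(n+1)=\langle m,l\rangle$, and $w^{w'}$ abbreviates the set of functions from $w'$ to $w$. Since $m,l\le n$ belong to $\mathrm{dom}\,g$, the value $F(g)$ is well defined, and $\FIS$ applied to $F$ with starting value $\{\langle 0,\omega\rangle\}$ yields a set function $v$ on $\omega$ with $v(n+1)=F(v(n))$. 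Taking $y=v(\sigma)$ then witnesses $\phi(\sigma,x)$ for $x=v(\sigma)(\sigma)$, and a routine comparison of this recursion with the defining clauses of $\FT$ identifies $x$ with $\bo{\sigma}=\{w:\langle\sigma,w\rangle\in\FT\}$.

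Uniqueness I would obtain from a second induction on $\sigma$: functionality of $y$ forces $y(0)=\omega$, and for $\sigma=\arrowtype{\tau}{\rho}$ the induction hypothesis fixes the values at the smaller codes $\tau$ and $\rho$, after which the function-space clause determines $y(\sigma)$ uniquely. Consequently any two witnesses agree at $\sigma$, giving $\phi(\sigma,x)\land\phi(\sigma,x')\to x=x'$.

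The one genuinely non-trivial step is forming the function space $(g(l))^{g(m)}$ as a \emph{set}, since $\CZF$ has no Powerset axiom and function spaces are not available for free. This is precisely where I would appeal to Subset Collection, which in $\CZF$ yields Exponentiation and thereby guarantees that the collection of all functions between two sets is again a set; this is what makes $F$ a total class function on $A$ and so legitimises the appeal to $\FIS$. Everything else reduces to bookkeeping of the coding $\wp$ and to functionality, so I expect the function-space formation to be the heart of the argument.
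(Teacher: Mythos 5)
Your proposal is correct and takes essentially the same route as the paper: the paper also argues by induction on finite types, with the whole weight carried by the fact that $\CZF$ proves the function space $y^{x}=\{z : z\in\Func(x,y)\}$ exists as a set (Exponentiation, a consequence of Subset Collection), exactly the point you isolate. The paper's proof is only a two-line sketch; your $\FIS$-based construction of the approximating function on $\sigma+1$ supplies the details it leaves implicit, and in particular correctly handles the fact that a witness for $\phi(\sigma,x)$ must be defined at \emph{every} code $\leq\sigma$, not merely at the subtypes of $\sigma$.
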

\begin{proof}
 This is proved by induction on finite types.
 For $\sigma=0$, then it is clear from
 $\omega=\{ y:\langle \sigma, y\rangle\in\FT\}=\bo{\sigma}$ and
 $\forall x\forall y(\psi(\sigma, x)\land\phi(\sigma, y)\to x=y)$.
The induction step is easy, since, for any $x$ and $y$, the function space 
 $y^{x}=\{ z:z\in\Func(x,y)\}$ exists uniquely in $\CZF$.
\end{proof}
\section{Applicative structure}
In order to define a realizability interpretation we must have a
notion of realizing functions on hand. A particularly  general and
elegant approach to realizability  builds on structures which have
been variably called {\em partial combinatory algebras}, {\em
applicative structures}, or {\em Sch\"onfinkel algebras}. These
structures are best described as the models of a theory $\appa$.
The following presents the main features of $\appa$; for full
details cf. \cite{f75,f79,beeson,troelstra}. The language of
$\appa$ is a first-order language with a ternary relation symbol
$\appo$, a unary relation symbol $N$ (for a copy of the natural
numbers) and equality, $=$, as primitives. The language has an
infinite collection of variables, denoted by $a$, $b$, $c$, $\ldots$, $g$, $h$ and $i$,
and nine distinguished constants:
${\mathbf 0}, {\mathbf s}_N,{\mathbf p}_N, {\mathbf k}, {\mathbf s},
{\mathbf d}, {\mathbf p},\bpn,\bpo$ for, respectively, zero, successor on $N$,
predecessor on $N$, the two basic combinators, definition by
cases, pairing and the corresponding two projections. There is no
arity associated with the various constants.
The {\it terms} of $\appa$ are just the variables and constants, which are denoted by $p$,
$q$, $r$, $s$
and $t$ possibly with sub- and superscripts.
We write $tt'\simeq s$ for $\appo(t,t',s)$.

Formulas are then generated  from atomic ones  using the
propositional connectives and the quantifiers.

In order to facilitate the formulation of the axioms, the language
of $\appa$  is expanded definitionally with the symbol $\simeq$
and the auxiliary notion of an {\it application term} is
introduced. We use $p$, $q$,$\ldots$ $t$ also for application terms.
The set of application terms is given by two clauses:
\begin{enumerate}
  \item {all terms of  $\appa$  are application terms; and}
  \item if  {$ s$} and  $ t$ are application terms,
then  $( st)$  is an application term.
\end{enumerate}
For  $s$ and  $t$ application terms, we have auxiliary, defined
formulae of the form:
\begin{eqnarray*} s \simeq  t& \dfi &\forall a ( s \simeq a
\leftrightarrow  t \simeq a),\end{eqnarray*} if $t$ is not a
variable. Here $ s\simeq a$   (for $a$ a free variable) is
inductively defined by:
$${ s} \simeq { a}\; {\;\mbox{ is }\;}
\left\{ \begin{array}{l@{}l}
{ s} = { a},& \text{if $ s$ is a term of $\appa$,}\\
\exists x,y [{s}_{ 1} \simeq x \,\wedge\,{ s}_{ 2} \simeq y
\,\wedge\, {\appo}(x,y,{ a})] & \text{if $ s$ is of the form
$(s_1s_2)$.}\end{array}\right.$$ Some abbreviations are ${ t}_{
1}t_2\ldots { t}_{ n}$ for ((...(${ t}_{1}{ t}_{ 2}$)...)${ t}_{
n}$); $t\downarrow$ for $\exists a({ t}\simeq a)$ and $\phi ({
t})$ for $\exists a({ t}\simeq a \wedge \phi (a))$.

Some further conventions are useful. Systematic notation for
$n$-{\it tuples\/} is introduced as follows: $(t)$ is $t$ and $(s,t)$
is $\bp st$, $(t_1,\ldots,t_n)$ is defined by
$((t_1,\ldots,t_{n-1}),t_n)$.
For projections, we write $(t)_{i}$ for $\bp_it$ and $(t)_{ij}$ for
$\bp_j(\bp_it)$ for $i,j\in\{ 0,1\}$.
 In this paper, the {\bf logic} of
$\appa$ is assumed to be that of intuitionistic predicate logic
with identity.  $\appa$'s {\bf non-logical axioms} are the
following:
\paragraph{ Applicative Axioms}
\begin{enumerate}
 \item $\appo(a,b,c_1)\lando\appo(a,b,c_2)\,\then\, c_1=c_2$.

 \item $(\bk ab)\downarrow\lando \bk ab\simeq a$.

  \item $(\bs
ab)\downarrow\lando \bs abc\simeq ac(bc)$.

 \item $(\bp
a_0a_1)\downarrow\lando(\bpn a)\downarrow\lando
(\bpo a)\downarrow\lando \mathbf p_{\mathbf i}
(\bp a_0a_1)\simeq a_i$ for $i=0,1$.

\item $ N(c_1) \lando N(c_2)\lando c_1= c_2 \then \bd
abc_1c_2\downarrow\lando \bd abc_1c_2\simeq a$.

\item $ N(c_1) \lando N(c_2)\lando c_1\ne c_2 \then \bd
abc_1c_2\downarrow\lando \bd abc_1c_2\simeq b$.

\item $\forall a\,\bigl(N(a)\then \bigl[{\mathbf s}_Na\downarrow
\lando  {\mathbf s}_Na \ne {\mathbf 0}\lando N({\mathbf
s}_Na)\bigr]\bigr)$.

\item $N({\mathbf 0})\;\wedge\; \forall a\,\bigl(N(a)\lando
x\ne{\mathbf 0}\then \bigl[{\mathbf p}_Na\downarrow \lando
{\mathbf s}_N({\mathbf p}_Na)=a \bigr]\bigr)$.

\item $\forall a\,\bigl[N(a)\then  {\mathbf p}_N({\mathbf
s}_Na)=a\bigr]$.
 \item $\varphi({\mathbf 0})\lando\forall
a\bigl[N(a)\lando\varphi(a)\then \varphi({\mathbf
s}_Na)\bigr]\,\then\,\forall a\bigl[N(a)\then\varphi(a)\bigr]$.
 \end{enumerate}
Let ${\mathbf 1}:={\mathbf s}_N\,{\mathbf 0}$. The applicative
axioms entail that ${\mathbf 1}$ is an application term that
evaluates to an object falling under $N$ but distinct from
$\mathbf 0$, i.e., ${\mathbf 1}\downarrow$, $N({\mathbf 1})$ and $
{\mathbf 0}\ne{\mathbf 1}$.

Employing the axioms for the combinators $\mathbf k$ and $\mathbf
s$ one can deduce an abstraction lemma yielding $\lambda$-terms of
one argument. This can be generalized using $n$--tuples and
projections.
\begin{lemma}\label{Abstraction}{\em (cf. \cite{f75})}
({\em \bf Abstraction Lemma}) For each application term $t$ there
is a new application term $t^*$ such that the parameters of $t^*$
are among the parameters of $t$ minus $a_1\ldots a_n$ and such that
$$\appa\proves t^*\!\downarrow\;\wedge\,
\;t^*a_1\ldots a_n\simeq t.$$

$\lambda(a_1\ldots a_n).t$ is written for $t^*$.
\end{lemma}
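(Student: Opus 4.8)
The plan is to prove the single-variable case first, i.e.\ to construct $\lambda x.t$ for one variable $x$, and then to obtain the general $\lambda(a_1\ldots a_n).t$ by iterating the construction. The single-variable abstraction I would define by recursion on the structure of the application term $t$, with three cases. If $t$ is the variable $x$ itself, set $\lambda x.t \dfi \bs\bk\bk$. If $t$ is a term of $\appa$ in which $x$ does not occur (a constant or a variable distinct from $x$), set $\lambda x.t\dfi \bk t$. Finally, if $t$ is an application $(pq)$, set $\lambda x.t\dfi \bs(\lambda x.p)(\lambda x.q)$, appealing to the recursion hypothesis for $\lambda x.p$ and $\lambda x.q$.

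Next I would verify the two required properties, $(\lambda x.t)\downarrow$ and $(\lambda x.t)x\simeq t$, simultaneously by induction on $t$ along the same case split. In the identity case, Applicative Axiom 3 gives $\bs\bk\bk\downarrow$, and $\bs\bk\bk x\simeq \bk x(\bk x)\simeq x$ using Axiom 2 together with $\bk x\downarrow$, the latter being a consequence of Axiom 2 once the definition of $\simeq$ on application terms is unwound. In the constant/free-variable case, Axiom 2 yields both $\bk t\downarrow$ and $\bk t x\simeq t$. In the application case, Axiom 3 gives $\bs(\lambda x.p)(\lambda x.q)\downarrow$ from the induction hypotheses $(\lambda x.p)\downarrow$ and $(\lambda x.q)\downarrow$, and then $\bs(\lambda x.p)(\lambda x.q)x\simeq(\lambda x.p)x((\lambda x.q)x)\simeq pq=t$, again invoking the induction hypothesis on $p$ and $q$.

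Having settled one variable, the general statement follows by setting $\lambda(a_1\ldots a_n).t\dfi \lambda a_1.(\lambda a_2.(\cdots(\lambda a_n.t)\cdots))$. Definedness $t^*\downarrow$ is immediate, since each outer abstraction step produces a defined term by the single-variable result, and $t^*a_1\ldots a_n\simeq t$ follows by applying the single-variable equation $(\lambda a_i.u)a_i\simeq u$ successively from the outside in. The parameter condition is clear from the construction, as each step removes exactly the abstracted variable and introduces no new parameters beyond $\bk$ and $\bs$.

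The main obstacle, and the point where care is genuinely required, is the bookkeeping of \emph{partiality}: since $\appo$ is partial, the equation $(\lambda x.t)x\simeq t$ must be read as Kleene equality, so that when $t=(pq)$ is undefined the abstraction term applied to $x$ must be undefined as well. What makes this go through is precisely that $\bk a$ and $\bs ab$ are always defined (by Axioms 2 and 3), so $\lambda x.t$ is total while the definedness behaviour of the body is transmitted faithfully through the $\bs$-reduction. Keeping the $\simeq$ equalities Kleene-valid at each reduction step, rather than silently assuming that all subterms denote, is the one thing that must not be glossed over.
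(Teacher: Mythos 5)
Your proof is correct. Note that the paper itself gives no proof of this lemma: it simply cites Feferman \cite{f75}, after remarking that the $\bk$- and $\bs$-axioms yield abstraction in one variable, which ``can be generalized using $n$-tuples and projections.'' Your one-variable construction (the cases $t\equiv x$, $t$ a term not containing $x$, $t\equiv(pq)$, handled by $\bs\bk\bk$, $\bk t$, $\bs(\lambda x.p)(\lambda x.q)$ respectively, with definedness and the Kleene equation proved by simultaneous induction) is exactly the standard argument the citation points to. Where you genuinely diverge is in the passage to several variables: the paper's sketch suggests abstracting a single variable standing for the tuple $(a_1,\ldots,a_n)$ and recovering the components via the projections $\bpn,\bpo$, whereas you curry, setting $\lambda(a_1\ldots a_n).t:=\lambda a_1.(\cdots(\lambda a_n.t)\cdots)$. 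For the lemma as stated, whose conclusion $t^*a_1\ldots a_n\simeq t$ is curried application, your route is the more direct one, and it is sound for the reason you implicitly use: each step needs only the instance $(\lambda a_i.u)a_i\simeq u$ of the one-variable lemma (the abstracted variable applied to itself), so no lemma about commuting abstraction with substitution is required, and since every abstract is defined outright, all partial applications $t^*a_1\ldots a_k$ with $k<n$ denote, which keeps the chain of Kleene equalities legitimate. The tuple route buys a uniform treatment of $n$-ary functions matching the paper's tuple notation, but it needs the pairing axioms and most naturally proves the variant $t^*(a_1,\ldots,a_n)\simeq t$, from which the curried form would have to be recovered by currying anyway; your version uses only the $\bk$- and $\bs$-axioms. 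Your closing paragraph on strictness identifies precisely the point that must not be glossed over, namely that totality of $\bk a$ and $\bs ab$ makes $\lambda x.t$ total while the possible undefinedness of the body is transmitted faithfully through the $\bs$-reduction.
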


The most important consequence of the Abstraction Lemma is the
Recursion Theorem. It can be derived in the same way as for the
$\lambda$--calculus (cf. \cite{f75}, \cite{f79}, \cite{beeson},
VI.2.7). Actually, one can prove a uniform version of the
following in $\appa$.

\begin{corollary}\label{Recursion}{\em\bf (Recursion Theorem)}
$$\forall f\exists g\forall a_1\ldots\forall a_n\,g(a_1,\ldots,a_n)\simeq
f(g,a_1,\ldots,a_n).$$ \end{corollary}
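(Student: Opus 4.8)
The plan is to follow the standard derivation of the fixed-point (recursion) combinator in a partial combinatory algebra, adapting the usual $\lambda$-calculus argument to the partial setting by exploiting that every $\lambda$-abstraction produced by Lemma~\ref{Abstraction} is a genuine term and hence totally defined. Write $\vec{a}$ for $a_1,\ldots,a_n$ and let $f$ be arbitrary. Using the $n$-ary Abstraction Lemma I first form, for a fresh variable $x$, the application term $\lambda(a_1\ldots a_n).\,xxa_1\ldots a_n$, and then set
$$d\;\dfi\;\lambda x.\,f\bigl(\lambda(a_1\ldots a_n).\,xxa_1\ldots a_n\bigr),\qquad g\;\dfi\;\lambda(a_1\ldots a_n).\,dda_1\ldots a_n .$$
Since $d$ and $g$ are $\lambda$-terms, Lemma~\ref{Abstraction} gives $\appa\proves d\downarrow$ and $\appa\proves g\downarrow$, so $g$ is a legitimate witness for the existential quantifier.

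It remains to verify $g(a_1,\ldots,a_n)\simeq f(g,a_1,\ldots,a_n)$, i.e. $ga_1\ldots a_n\simeq fga_1\ldots a_n$. By the defining property of $g$ from Lemma~\ref{Abstraction}, $ga_1\ldots a_n\simeq dda_1\ldots a_n$. Feeding $d$ to itself and using the property of the outer abstraction in $d$ yields $dd\simeq f\bigl(\lambda(a_1\ldots a_n).\,dda_1\ldots a_n\bigr)$, and the right-hand argument is exactly $g$; hence $dd\simeq fg$ as application terms, and applying both sides to $a_1,\ldots,a_n$ gives $dda_1\ldots a_n\simeq fga_1\ldots a_n$. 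Chaining the two equalities delivers the claim. The crucial device is the $\eta$-expansion: although the self-application $dd$ need not be defined, the term $\lambda(a_1\ldots a_n).\,dda_1\ldots a_n$ that we hand to $f$ is an abstraction, hence total, so $f$ is applied to a bona fide element and the reductions above are licensed regardless of whether $dd\downarrow$.

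The main obstacle is the definedness bookkeeping hidden in the symbol $\simeq$. Since for application terms $s\simeq t$ means Kleene equality (both sides undefined, or both defined and equal), each reduction step must be justified from the inductive clause defining $s\simeq a$ together with the single-valuedness axiom~(1) and the combinator axioms~(2),(3) for $\bk$ and $\bs$; in particular one must unwind $(\lambda\ldots)\,a_1\ldots a_n\simeq t$ into a chain of existential statements about intermediate values. None of this is deep, but it is where all the care resides. Finally, abstracting $f$ as well, i.e. replacing $g$ by $\mathbf{r}\dfi\lambda f.\,g$, produces a single term with $\appa\proves\mathbf{r}\downarrow$ and $\appa\proves (\mathbf{r}f)a_1\ldots a_n\simeq f(\mathbf{r}f)a_1\ldots a_n$, which is the uniform version alluded to above.
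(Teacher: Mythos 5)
Your strategy---deriving the fixed point from the Abstraction Lemma exactly as in the $\lambda$-calculus---is the approach the paper itself intends: the paper offers no proof of this corollary beyond the citation to Feferman and Beeson VI.2.7, and your $\eta$-guard is the standard device for the partial setting. However, one step of your argument, as written, is not justified, and it can genuinely fail in a non-extensional pca such as the ones this paper actually uses.

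The step is ``the right-hand argument is exactly $g$.'' You introduce $g$ as a $\lambda$-term, i.e.\ as the output of the abstraction algorithm applied to the application term $dda_1\ldots a_n$; that is how you obtain $\appa\proves g\downarrow$ and $ga_1\ldots a_n\simeq dda_1\ldots a_n$. But what ``feeding $d$ to itself'' actually yields is $dd\simeq f(u)$, where $u$ is the term obtained by substituting $d$ for the variable $x$ in the fixed term $\lambda(a_1\ldots a_n).xxa_1\ldots a_n$ (universal instantiation at the value of $d$). The term $u$ is not the term $g$: the abstraction algorithm must recurse through compound subterms (otherwise $t^*{\downarrow}$ would fail whenever a parameter makes a subterm undefined), so $g=\lambda(a_1\ldots a_n).dda_1\ldots a_n$ decomposes $d$ into its combinator structure, whereas $u$ keeps $d$ intact underneath occurrences of $\bk$. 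The two terms are extensionally equal---both behave as $\vec{b}\mapsto dd\vec{b}$---but nothing in $\appa$ forces them to denote the same element, and in $\RE(\omega)$ or Kleene's first model they generally do not; since $f$ is an arbitrary element, $f(u)$ and $f(g)$ can then differ, so $dd\simeq fg$, and with it the conclusion $g\vec{a}\simeq fg\vec{a}$, is not secured. The repair is one line: define $g$ to be $u$ itself, the substitution instance; then $g\downarrow$ and $ga_1\ldots a_n\simeq dda_1\ldots a_n$ still follow by instantiating the Abstraction-Lemma equations for $\lambda(a_1\ldots a_n).xxa_1\ldots a_n$ at the value of $d$, and the key identity $dd\simeq fg$ holds by construction. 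Equivalently, use the form in the cited sources: $D\dfi\lambda x.\lambda(a_1\ldots a_n).f(xx)a_1\ldots a_n$ and $g\dfi DD$, which is defined because $Dx$ is always an abstraction; there the argument handed to $f$ is literally $xx$, hence after self-application literally $g$, and the intensionality issue never arises.
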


\section{Graph models in constructive set theories}\label{graphmodel}

In this section, we present an easiest example of a model of total applicative structure
constructed in constructive set theories without the powerset axioms, such as $\CZF$.

Plotkin and Scott independently developed a PCA whose universe is
the power set of the integers, $\pow(\mathbb{N})$.
This construction
 exploits the fact that finite subsets of $\mathbb{N}$ can be coded as
 integers; the finite set $\{k_0,\ldots,k_r\}$ with $k_0<\cdots <k_r$ can be coded by
 \begin{eqnarray} [k_0,\ldots,k_r]:=\sum_{i=0}^r2^{k_i}, &&
 [\emptyset]:=0\\
 e_n :=\{k_0,\ldots,k_r\}&\mbox{ iff }& n=[k_0,\ldots,k_r].
\end{eqnarray}
We use $X,Y,Z$ for arbitrary subsets of $\mathbb{N}$. Since the coding of
finite sets is onto $\mathbb{N}$, we can use integer variables for finite
sets. We shall often neglect the distinction between finite sets
and their codes in our notation, and thus write, e.g.,
$$ n\subseteq Y\;:=\;e_n\subseteq Y, \phantom{AA} n\in m\;:=\;n\in e_m; \phantom{AA} n\subseteq m\::=\;e_n\subseteq e_m.$$
 We take $\pi:\mathbb{N}\times\mathbb{N}\to\mathbb{N}$ to be a standard primitive
 recursive pairing function with projections $\pi_0$ and $\pi_1$;
 i.e., $\pi_i(\pi(n_0,n_1))=n_i$ for $i=0,1$; for $\pi(n,m)$ we
 also use the abbreviation $(n,m)$.

 Then, the application is defined as follows
 $$X\cdot Y\;\simeq\;\{n\in\mathbb{N}|\,\exists k\subseteq Y\,(k,n)\in X\}.$$
The whole construction of graph models can be found in \cite[IV.7.5]{beeson}, for example,
or can be done as described in Proposition \ref{POAPP} below.
An important aspect of this model is the application defined {\it totally}, i.e., for each
$X$, $Y\subseteq \mathbb{N}$, there is $Z$ with $X\cdot Y\simeq Z$.
In such a model, we prefer to use $=$ instead of $\simeq$.

In $\CZF$, we have no powerset $\pow(\omega)$ and so we cannot simulate the above
construction.
As it was mentioned in \cite[IV.7.5]{beeson}, the set of semicomputable or recursively
enumerable subsets of $\mathbb{N}$ also forms a model of $\appa$,
which we can construct in $\CZF$ as follows:
Using the canonical interpretation of the language of the first order arithmetic $L_1$ into the 
language of set theory $L_s$, we can regard a formula in $L_1$ as a one in $L_s$.
Let $T$ be the Kleene's $T$-predicate.
Then, for each $n$,  Bounded Separation yields the set
$\{ x\in\omega:(\exists y\in\omega)Tnxy\}$ and Strong Collection yields the set
$\mathcal{RE}(\omega)$ of recursively
enumerable sets of natural numbers, i.e.,
\begin{align*}
\mathcal{RE}(\omega)=\{\{ x\in\omega:(\exists y\in\omega)Tnxy\}:n\in\omega\}. 
\end{align*}
Furthermore, we have the graph
\begin{align*}
 & \{ \langle \langle X,
Y\rangle,Z\rangle|X, Y, Z\in\mathcal{RE}(\omega)\land Z=\{ x\in\omega|\exists
y\in\omega(y\subseteq Y(y,x)\in X)\}\}
\end{align*}
of application defined above is a set again by Bounded Separation.
The next proposition ensures that $\mathcal{RE}(\omega)$ actually forms a model os $\appa$ in
$\CZF$. 
 \begin{proposition}\label{POAPP}
There is an interpretation $(\text{-})^{\dag}$ from formulae the language $L_{\appa}$ of
  $\appa$ into the ones in $L_s$ such that $(\bot)^{\dag}\equiv\bot$ and that
 $\appa\vdash \varphi(\vec{x})$ implies
  $\CZF\vdash\forall\vec{x}\in\mathcal{RE}(\omega)(\varphi(\vec{x}))^{\dag}$ for each
  formula $\varphi(\vec{x})$ with only displayed free variables.
 \end{proposition}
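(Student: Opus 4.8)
The plan is to exhibit $\RE(\omega)$ as a model of $\appa$ inside $\CZF$ by giving a concrete translation and then checking soundness by induction on derivations. First I would define the interpretation $(\text{-})^{\dag}$ compositionally: variables are translated to variables understood to range over $\RE(\omega)$; the connectives and quantifiers commute with $(\text{-})^{\dag}$ (with quantifiers relativised to $\RE(\omega)$), and $(\bot)^{\dag}\equiv\bot$. The atomic formulae are translated by setting $(\appo(p,q,r))^{\dag}$ to be $r^{\dag}=p^{\dag}\cdot q^{\dag}$, where $\cdot$ denotes the graph application $X\cdot Y=\{x\in\omega:\exists y\in\omega(y\subseteq Y\land (y,x)\in X)\}$; $(N(p))^{\dag}$ to be $\exists n\in\omega(p^{\dag}=\{n\})$, so that the copy of $\mathbb N$ inside the model is the set of singletons; and $(p=q)^{\dag}$ to be $p^{\dag}=q^{\dag}$. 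Each of the nine constants is translated to a fixed, explicitly defined recursively enumerable set: $\mathbf 0$ to $\{0\}$, and $\bk,\bs,\bd,\bp,\bpn,\bpo,\mathbf{s}_N,\mathbf{p}_N$ to the graphs of the corresponding combinators, each of which is recursive and hence lies in $\RE(\omega)$.

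The central lemma, and the step I expect to be the main obstacle, is closure of $\RE(\omega)$ under application: for all $X,Y\in\RE(\omega)$ there is a unique $Z\in\RE(\omega)$ with $Z=X\cdot Y$. Since $X=W_m$ and $Y=W_n$ for some indices $m,n\in\omega$, where $W_k=\{x\in\omega:(\exists y\in\omega)Tkxy\}$, the membership $x\in X\cdot Y$ unwinds to $\exists k(e_k\subseteq W_n\land (k,x)\in W_m)$, which is recursively enumerable relative to $m,n$: one searches for a finite $e_k$, a $T$-computation witnessing $(k,x)\in W_m$, and enumerations of $W_n$ covering $e_k$. The content of the lemma is therefore that $\CZF$ proves the existence of an index $p$ with $X\cdot Y=W_p$, uniformly in $m,n$. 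I would obtain this from the fact that the index function $(m,n)\mapsto p$ is primitive recursive (via the $s$-$m$-$n$ theorem), all of which is available because $\CZF$ proves the totality of every primitive recursive function and formalises elementary recursion theory through Kleene's $T$-predicate. Totality of the model's application then justifies treating $\cdot$ as a genuine binary operation on $\RE(\omega)$, so that writing $X\cdot Y$ for the unique value is legitimate.

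With closure in hand, the remaining work is to verify that each non-logical axiom of $\appa$ holds under $(\text{-})^{\dag}$ once relativised to $\RE(\omega)$. Axiom 1 is immediate from the single-valuedness of $\cdot$; the combinatory axioms (2)--(6) and the arithmetical axioms (7)--(9) reduce to direct computations of $X\cdot Y$ for the explicitly chosen constant sets, e.g.\ taking $\bk^{\dag}\cdot X$ to be the constant function $Y\mapsto X$ with graph $\{(0,x):x\in X\}$ so that $\bk^{\dag}\cdot X\cdot Y=X$, and similarly for $\bs,\bd,\bp,\bpn,\bpo$. These verifications are routine and I would not grind through them. The one axiom requiring care is the induction scheme (10): under the interpretation $N$ becomes the predicate ``is a singleton $\{n\}$ with $n\in\omega$'', and I would transfer the scheme to the Full Mathematical Induction Schema for $\omega$ established in Lemma \ref{mathematicalinduction}, noting that the interpreted $\varphi$ is an arbitrary $L_s$-formula, so the full (not merely bounded) induction of $\CZF$ is exactly what is needed.

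Finally I would conclude by induction on the length of an $\appa$-derivation of $\varphi(\vec x)$. Because $(\text{-})^{\dag}$ is compositional and commutes with the intuitionistic connectives and quantifiers, every rule of intuitionistic predicate logic with equality is preserved, and the base cases are precisely the verified axioms; hence $\CZF\vdash\forall\vec x\in\RE(\omega)\,(\varphi(\vec x))^{\dag}$. The stipulation $(\bot)^{\dag}\equiv\bot$ holds by construction, which is what guarantees that the interpretation is non-trivial.
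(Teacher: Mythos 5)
Your proposal is correct and takes essentially the same route as the paper: the same compositional translation with quantifiers relativised to $\mathcal{RE}(\omega)$, the same interpretation of atomic formulas (graph application, $N$ as the singletons $\{n\}$), explicit recursively enumerable graphs for the nine constants, verification of the $\appa$-axioms inside $\CZF$ with the induction scheme discharged by full induction on $\omega$, and a final induction on derivations. The only difference is one of emphasis: you isolate closure of $\mathcal{RE}(\omega)$ under graph application as a central lemma proved by formalised recursion theory (indices and s-m-n), a point the paper leaves implicit in its appeal to Beeson, whereas the paper instead spells out the combinator graphs (notably $\bs^{\dag}$ via the predicate $Q$) and grinds through the set equations $\bk^{\dag}XY=X$ and $\bs^{\dag}XYZ=XZ(YZ)$ that you declare routine.
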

  \begin{proof}
   Fix a bijection $^{\dag}$ from the variables in $L_{\appa}$ into the ones in $L_{s}$.
 We extend $^{\dag}$ to an interpretation from the formulae in $L_{\appa}$ into the ones
   in $L_{s}$.
 First, we set the interpretation of constants in $\appa$.
Define $\bk^{\dag}$, $\bs^{\dag}$, $\bp^{\dag}$, $\bp_0$, $\bp_1$, $\bd$,
   $\bs^{\dag}_{N}$, $\bp_{N}$ and $\mathbf{0}$ as follows:
   \begin{align*}
    & \bk^{\dag}=\{(x,(y,z))|z\in x\}; &
    & \bs^{\dag}\,=\,\{(x,(y,(z,w)))|\exists a\,Q(a,x,y,w,z)\}; & \\
    & \multispan3{$
    \bp^{\dag}\,=\,
    \{(2^n,(y,2n))|n\in\omega\}\,\cup\,\{(x,(2^m,2m+1))|m\in\omega\};$\hfill} & \\
    & \bp_0^{\dag}=\{(2^{2n},n)|n\in\omega\}; & 
    & \bp_1^{\dag}\{(2^{2n+1},n)|n\in\omega\}; & \\
    & \multispan3{$\bd^{\dag}\,=\,\{(y,(x,(2^m,(2^n,z))))|\,(m=n\,\wedge\;z\in
x)\;\vee\;(m\ne n\,\wedge\,z\in y\};$}& \\
    & \bs_{N}^{\dag}\,=\, \{ (2^{n}, 2^{n+1})|n\in\omega\}; &
    & \bp_{N}^{\dag}\,=\, \{ (2^{n+1}, 2^{n})|n\in\omega\}; & \\
    & \mathbf{0}^{\dag}\, =\, \{ \emptyset\},
   \end{align*}
   where \begin{eqnarray}\label{Q}Q(a,x,y,w,z)&\mbox{ iff }&\exists
z_1\subseteq
  z\;(z_1,(a,w))\in x\;\wedge\;\forall c\in a\,\exists z_2\subseteq
  z\,(z_2,c)\in y.\end{eqnarray}
   Then, for an application term $ts$, define $(ts)^{\dag}\equiv \{ x\in\omega:(\exists
   y\subseteq s^{\dag})(y,x)\in t^{\dag}\}$.

   For atomic formulae in $L_{\appa}$,
   set $(\bot)^{\dag}\equiv\bot$,
   $(N(x))^{\dag}\equiv \exists
   n\in\omega(x^{\dag}=\{ n\})$,
   $(\appo(a,b,c))^{\dag}\equiv (ab)^{\dag}=c^{\dag}$ and
   $(t=s)^{\dag}\equiv t^{\dag}=s^{\dag}$, respectively.
   For compound formulae in $L_{\appa}$, define $^{\dag}$ inductively by
   \begin{align*}
   & (\varphi\circ\psi)^{\dag}\equiv (\varphi)^{\dag}\circ(\psi)^{\dag} \;
    \text{for $\circ\in\{ \land, \lor, \to\}$}; & 
    & (\mathbf{Q} x\varphi_0(x))\equiv \exists
    x^{\dag}\in\mathcal{RE}(\omega)(\varphi(x))^{\dag};
    \text{for $\mathbf{Q}\in\{ \exists, \forall\}$}.
    & 
   \end{align*}

   Now it is enough to show that $\CZF\vdash\varphi^{\dag}$ for each universal closure
   $\varphi$ of the axioms of $\appa$. 
   Let $X$, $Y$ and $Z$ be subsets of $\omega$.
   For $\bk$, we have $\bk^{\dag}X=\{(y,z)|z\in X\}$, so
  $\bk^{\dag}XY=\{z|z\in X\}=X$; thus verifying the axioms for the
  combinator $\bk$.
   For $\bs$, we have
\begin{eqnarray*} \bs^{\dag}X &=&\{(y,(z,w))|\exists x\subseteq X\,\exists
a\,Q(a,x,y,w,z)\}\\
 \bs^{\dag}XY&=&\{(z,w)|\exists x\subseteq X\,\exists y\subseteq Y\,\exists
a\,Q(a,x,y,w,z)\}\\
 \bs^{\dag}XYZ&=&\{w|\exists x\subseteq X\,\exists y\subseteq Y\,\exists z\subseteq Z\,\exists
a\,Q(a,x,y,w,z)\}\\
XZ(YZ)&=&\{w|\exists a\subseteq YZ\,(a,w)\in XZ\}.\end{eqnarray*}
From the last two set equations one easily computes that
$\bs^{\dag}XYZ=XZ(YZ)$, so that $\bs^{\dag}$ satisfies the axiom for $\bs$.
For other constants, it is easy to show that each of them has the desired properties by 
noticing that $2^n$ codes the set $\{n\}$. 
  \end{proof}

\section{The general realizability structure}
Realizability semantics are ubiquitous in the study of intuitionistic
theories. In the case of set theory, they differ in important aspects
from Kleene's \cite{kl} realizability in their treatment of the quantifiers.
Its origin is Kreisel's and Troelstra's \cite{kt} definition of
realizability for second order Heyting arithmetic.
The latter was applied to
systems of higher order arithmetic and (intensional) set theory by
Friedman \cite{frieda} and Beeson \cite{beeson}. McCarty
\cite{mc84} and \cite{mc86} adapted Kreisel-Troelstra realizability directly to the
extensional intuitionistic set theories such as $\IZF$.  This type of
realizability can also be formalized in $\CZF$ (see \cite{R04}) to yield a
self-validating semantics for $\CZF$.
\cite{R05} introduced the general realizability structure with truth over an
arbitrary (set) model $\mathcal{A}$ of  $\appa$.

In \cite{R05}, the general realizability structure over $\omega$ as a model of $\appa$ is
defined.
In this paper, we define it over arbitrary models $\mathcal{A}$ of $\appa$ such that
both $|\mathcal{A}|$ and the graph $\{ (x, y, z)\in|\mathcal{A}|^3: \appo(x,y,z)\}$ are
sets, such as $\mathcal{RE}(\omega)$ defined in the previous section.
If $z$ is an ordered pair, i.e., $z=\paar{x,y}$ for some sets
$x,y$, then we use $\first(z)$ and $\second(z)$ to denote the
first and second projection of $z$, respectively; that is,
$\first(z)=x$ and $\second(z)=y$.
\begin{definition}\label{grs}
Ordinals are transitive sets whose elements are transitive also.
As per usual, we use lower case Greek letters $\alpha$ and $\beta$ to range over
ordinals.
 Let $\mathcal{A}$ be a model of $\appa$ such that $|\mathcal{A}|$ is a set.
 Besides $\vvv_{\alpha}$ and $\vvv$,
 we define $\vns_{\mathcal{A},\alpha}$ and $\vns_{\mathcal{A}}$ as follows:
\begin{align}
 \label{eg} \vns_{\mathcal{A}, \alpha} &
 = \bigcup_{\beta\in\alpha}\bigl
 \{\paar{x,y}:\;x\in\vvv_{\beta};\;y\subseteq|\mathcal{A}|\times\vns_{\mathcal{A},
 \beta};\;
(\forall z\in y)\,\first(\second(z))\in x\bigr\}
\\ \nonumber\vvv_{\alpha}&= \bigcup_{\beta\in
\alpha}\po(\vvv_{\beta}) \\
 \nonumber\vns_{\mathcal{A}} &=
\bigcup_{\alpha}\vns_{\mathcal{A},\alpha}\\
\nonumber\vvv &= \bigcup_{\alpha}\vvv_{\alpha}. 
\end{align}
\end{definition}
As the power set operation  is not available in $\CZF$ it is not
clear whether the classes $\vvv$ and $\vns_{\mathcal{A}}$ can be formalized in
$\CZF$. However, employing the fact that $\CZF$ accommodates
inductively defined classes this can be demonstrated in the same
vein as in \cite{R04}, Lemma 3.4.

The definition of $\vns_{\mathcal{A},\alpha}$ in (\ref{eg}) is perhaps a bit
involved. Note first that all the elements of $\vns_{\mathcal{A}}$ are ordered
pairs $\paar{x,y}$ such that $y\subseteq |\mathcal{A}|\times \vns_{\mathcal{A}}$. For
an ordered pair $\paar{x,y}$ to enter $\vns_{\mathcal{A},\alpha}$ the first
conditions to be met are that $x\in\vvv_{\beta}$ and
$y\subseteq|\mathcal{A}|\times\vns_{\mathcal{A}, \beta}$ for some $\beta\in\alpha$.
Furthermore, it is required that $x$ contains enough elements from
the transitive closure of $x$ in that whenever $\paar{u,v}\in y$
then $\first(u)\in x$.

\begin{lemma}\label{stets} $(\CZF)$. \begin{itemize}
\item[(i)] {\em $\vvv$ and $\vns_{\mathcal{A}}$} are cumulative: for
$\beta\in\alpha$, {\em $\vvv_{\beta}\subseteq\vvv_{\alpha}$ and
$\vns_{\mathcal{A}, \beta}\subseteq \vns_{\mathcal{A}, \alpha}$.}
 \item[(ii)] For all sets $x$,  {\em $x\in \vvv$}.
 \item[(iii)]  If $x,y$ are sets,
 {\em $y\subseteq |\mathcal{A}|\times\vns_{\mathcal{A}}$} and
	       $(\forall z\in y)\,\first(\second(z))\in x$,
	       then {\em $\paar{x,y}\in\vns_{\mathcal{A}}$}.
 \end{itemize}
\end{lemma}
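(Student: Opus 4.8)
The plan is to treat the three parts in the order stated, since (ii) will use (i) and (iii) will use both (i) and (ii). Throughout I work with $\vvv_\alpha$ and $\vns_{\mathcal{A},\alpha}$ as the inductively defined classes whose membership conditions are read off directly from (\ref{eg}); the only external tools needed are the transitivity of ordinals and Strong Collection.

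For (i) I would argue directly, without any induction. If $\beta\in\alpha$ then, since $\alpha$ is an ordinal and hence transitive, $\beta\subseteq\alpha$. Now both $\vvv_\beta=\bigcup_{\gamma\in\beta}\po(\vvv_\gamma)$ and $\vvv_\alpha=\bigcup_{\gamma\in\alpha}\po(\vvv_\gamma)$ are unions of the \emph{same} globally defined family $\gamma\mapsto\po(\vvv_\gamma)$, the first indexed by $\beta$ and the second by the larger index set $\alpha$; hence $\vvv_\beta\subseteq\vvv_\alpha$. The identical observation settles $\vns_{\mathcal{A},\beta}\subseteq\vns_{\mathcal{A},\alpha}$, since by (\ref{eg}) these are unions over $\gamma\in\beta$ respectively $\gamma\in\alpha$ of the very same clause, so restricting the index set to the subset $\beta$ can only shrink the union.

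For (ii) the natural route is Set Induction. Assuming $y\in\vvv$ for every $y\in x$, I obtain for each $y\in x$ an ordinal with $y\in\vvv_{\alpha_y}$. The essential step is to bound all these ranks uniformly: by Strong Collection there is a set $A$ of ordinals such that every $y\in x$ lies in $\vvv_\alpha$ for some $\alpha\in A$. Putting $\lambda=\bigcup\{\alpha+1:\alpha\in A\}$, which is again an ordinal and satisfies $\alpha\in\lambda$ for each $\alpha\in A$, part (i) gives $\vvv_\alpha\subseteq\vvv_\lambda$, whence $x\subseteq\vvv_\lambda$ and therefore $x\in\po(\vvv_\lambda)\subseteq\vvv_{\lambda+1}\subseteq\vvv$.

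Part (iii) is the crux and applies the same bounding idea twice. From (ii) I get an ordinal $\beta_1$ with $x\in\vvv_{\beta_1}$. For the second coordinate, each $z\in y$ has $\second(z)\in\vns_{\mathcal{A}}$, so $\second(z)\in\vns_{\mathcal{A},\gamma}$ for some ordinal $\gamma$; Strong Collection produces a set of ordinals whose successors I union to a single ordinal $\beta_2$, and by (i) then $\second(z)\in\vns_{\mathcal{A},\beta_2}$ for every $z\in y$, so that $y\subseteq|\mathcal{A}|\times\vns_{\mathcal{A},\beta_2}$. Taking $\beta=\beta_1\cup\beta_2$ and using (i) once more to lift $x$ and $y$ to level $\beta$, the hypothesis $(\forall z\in y)\,\first(\second(z))\in x$ is precisely the remaining clause in (\ref{eg}), so $\paar{x,y}$ meets the defining condition at stage $\beta$ and hence $\paar{x,y}\in\vns_{\mathcal{A},\beta+1}\subseteq\vns_{\mathcal{A}}$. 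The one genuine obstacle is the passage from a pointwise-existing rank to a single uniform bound; in the absence of Replacement this is exactly what Strong Collection (together with the cumulativity established in (i)) supplies, and it is the only place where the set-theoretic strength of $\CZF$ is really used.
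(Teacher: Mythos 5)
Your proof is correct and takes essentially the same approach as the paper, which simply defers to the standard argument of \cite[Lemma 4.2]{R05}: cumulativity read off directly from the definition, and Set Induction combined with Strong Collection to produce uniform ordinal rank bounds for (ii) and (iii). One cosmetic point: in (iii) you invoke (i) for $\beta_1\subseteq\beta_1\cup\beta_2$, although (i) is stated for membership $\beta\in\alpha$ (and membership can fail here, constructively one cannot even decide which of $\beta_1,\beta_2$ is larger); this is harmless, since your argument for (i) in fact establishes monotonicity of $\vvv_{(\cdot)}$ and $\vns_{\mathcal{A},(\cdot)}$ under inclusion of the index ordinals, which is exactly what you use, or alternatively one can take $\beta=(\beta_1+1)\cup(\beta_2+1)$ so that $\beta_1,\beta_2\in\beta$ and (i) applies literally.
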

\begin{proof}
 This is proved in the same way as \cite[Lemma 4.2]{R05}.
\end{proof}

\section{Defining realizability}

We now proceed to define a notion of realizability over $\vns_{\mathcal{A}}$. We
use lower case gothic letters
$\goa$, $\gob$, $\ldots$, $\gok$, possibly with superscripts
as variables to range over elements of $\vns_{\mathcal{A}}$ while variables
$a$, $b$, $\ldots$, $j$ will be reserved for elements of $|\mathcal{A}|$. Each
element $\goa$ of $\vns_{\mathcal{A}}$ is an ordered pair $\paar{x,y}$, where
$x\in\vvv$ and $y\subseteq\mathcal{A}\times\vns_{\mathcal{A}}$; and we  define the
components of $\goa$ by
\begin{eqnarray*}\goao &:=& \first(\goa)=x\\
\goas &:=& \second(\goa)=y.
\end{eqnarray*}

\begin{lemma}\label{fakt} For every $\goa\in\vns_{\mathcal{A}}$, if
$\paar{e,\goc}\in\goas$ then $\goco\in\goao$.\end{lemma}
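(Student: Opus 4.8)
The plan is to simply unfold the definition of membership in $\vns_{\mathcal{A}}$ and read off the closure condition that is deliberately built into it. Since $\goa\in\vns_{\mathcal{A}}$, the clause $\vns_{\mathcal{A}}=\bigcup_{\alpha}\vns_{\mathcal{A},\alpha}$ yields an ordinal $\alpha$ with $\goa\in\vns_{\mathcal{A},\alpha}$. Inspecting the set-builder in (\ref{eg}), every element of $\vns_{\mathcal{A},\alpha}$ is an ordered pair $\paar{x,y}$, which by the definition of the components must be $\paar{\goao,\goas}$, subject to three conditions. The crucial one for us is the third:
\[ (\forall z\in\goas)\;\first(\second(z))\in\goao. \]
This is exactly the property I want to exploit.

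First I would fix such an $\alpha$ and record that, because $\goa\in\vns_{\mathcal{A},\alpha}$, the displayed closure condition holds for $\goa$, namely $(\forall z\in\goas)\,\first(\second(z))\in\goao$. Next, given the hypothesis $\paar{e,\goc}\in\goas$, I would instantiate the bound variable $z$ with the pair $\paar{e,\goc}$. It then remains only to evaluate the nested projection: $\second(\paar{e,\goc})=\goc$, whence $\first(\second(\paar{e,\goc}))=\first(\goc)=\goco$. Substituting this back into the instantiated condition gives $\goco\in\goao$, which is the desired conclusion.

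There is essentially no hard step here: the lemma is an immediate consequence of the side condition imposed in Definition \ref{grs}, and the proof is a matter of tracking the notation rather than of any real argument. The only thing to watch is the bookkeeping of the composite projection $\first\circ\second$ against the component abbreviations $\goao=\first(\goa)$ and $\goco=\first(\goc)$; once $z$ is taken to be the ordered pair $\paar{e,\goc}$, these align directly. Notably, no appeal to cumulativity (Lemma \ref{stets}) or to a particular witness $\beta\in\alpha$ is required beyond merely certifying that $\goa$ lands in some $\vns_{\mathcal{A},\alpha}$.
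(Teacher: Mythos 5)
Your proof is correct and matches the paper's argument, which simply states the lemma is immediate from the definition of $\vns_{\mathcal{A}}$: the closure condition $(\forall z\in\goas)\,\first(\second(z))\in\goao$ is built into Definition \ref{grs}, and instantiating $z:=\paar{e,\goc}$ gives $\goco\in\goao$ exactly as you compute. Your write-up just makes explicit the unfolding that the paper leaves implicit.
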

 \begin{proof}
  This is immediate by the definition of $\vns_{\mathcal{A}}$.
 \end{proof} 
 If $\varphi$ is a sentence with parameters in
$\vns_{\mathcal{A}}$, then $\varphi^{\circ}$ denotes the formula obtained from
$\varphi$ by replacing each parameter $\goa$ in $\varphi$ with
$\goao$.

\begin{definition}\label{real}
We define $e\trel \phi$ for elements $e\in|\mathcal{A}|$ and sentences $\phi$ with
 parameters in 
$\vns_{\mathcal{A}}$. 
 Bounded quantifiers will be treated as quantifiers in their own right, i.e., bounded
and unbounded quantifiers are treated as syntactically different kinds of quantifiers.
 (The subscript $_{rt}$ is supposed to serve as a reminder
of ``realizability with truth''.)
 \begin{eqnarray*}
  e\trel \goa\in \gob &\mbox{iff} &
\goao\in\gobo\;\,\wedge\;\,\exists\,
\goc\;\bigl[\paar{(e)_0,\goc}\in \gobs\;\wedge\; (e)_1\trel
\goa=\goc\bigr]
\\
e\trel \goa=\gob &\mbox{iff}& \goao=\gobo\;\, \wedge\;\, 
\forall
f\forall\goc\,\bigl[\paar{f,\goc}\in \goas 
\;\rightarrow\; (e)_0f\trel \goc\in \gob\bigr] 
 \\
&& \qquad\qquad\wedge\;\,\forall
f\forall\goc\,\bigl[\paar{f,\goc}\in \gobs
\;\rightarrow\; (e)_1 f\trel \goc\in \goa\bigr] \\
e\trel \phi\wedge\psi &\mbox{iff}& (e)_0\trel \phi\;\wedge\;(e)_1\trel \psi\\
e\trel \phi\vee\psi &\mbox{iff}& \bigl[(e)_0=\mathbf{0}\,\wedge\,(e)_1\trel
\phi\bigr]\;\vee
\;\bigl[(e)_0=\mathbf{1}\;\wedge\;(e)_1\trel \psi\bigr]\\
e\trel\neg\phi \phantom{AA} &\mbox{iff}& \neg\phi^{\circ}\;\,\wedge\;\forall f\;\neg f\trel \phi\\
e\trel \phi\rightarrow\psi &\mbox{iff}&
(\phi^{\circ}\rightarrow\psi^{\circ})\;\,\wedge\;\,\forall f\,
\bigl[f\trel \phi\;\rightarrow\;
ef\trel \psi \bigr]\\   
e\trel (\forall x\in \goa)\; \phi  &\mbox{iff}& (\forall
x\in\goao)\phi^{\circ} \;\,\wedge\\
&& \forall f\,\forall \gob\bigl(\paar{f,\gob}\in\goas
 \;\rightarrow\; ef\trel \phi[x/\gob]\bigr)\\
e\trel (\exists x\in \goa)\phi  &\mbox{iff}& \exists \gob\,\bigl(
\paar{(e)_0,\gob}\in \goas\;\wedge\; 
(e)_1\trel \phi[x/\gob]
\bigr) \\
e\trel \forall x \phi \phantom{Ai} &\mbox{iff}& \forall \goa \;e\trel \phi[x/\goa]\\
e\trel \exists x\phi\phantom{Ai}  &\mbox{iff}& \exists
\goa\;e\trel \phi[x/\goa]
 \end{eqnarray*}
\end{definition}

\begin{lemma}\label{Q1.3} If $e\trel\phi$ then
$\phi^{\circ}$.\end{lemma}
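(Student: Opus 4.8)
The plan is to prove Lemma~\ref{Q1.3}, the ``soundness of realizability with respect to truth'' statement, by induction on the structure of the sentence $\phi$. This is a standard but essential bookkeeping lemma: the clauses of Definition~\ref{real} were deliberately designed so that each realizability condition $e\trel\phi$ carries, as an explicit conjunct or consequence, the truth of $\phi^{\circ}$. So the induction will mostly consist of reading off the relevant conjunct from each clause and feeding the inductive hypothesis into the quantifier and implication cases. I would set up the induction with the two atomic cases $\goa\in\gob$ and $\goa=\gob$ treated together (since their definitions are mutually referential), and then handle the logical connectives and the four kinds of quantifiers.

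First I would dispose of the atomic base cases. For $e\trel\goa\in\gob$, the definition has $\goao\in\gobo$ as its first conjunct, and $(\goa\in\gob)^{\circ}$ is by definition $\goao\in\gobo$, so there is nothing to prove beyond unwinding notation; similarly $e\trel\goa=\gob$ has $\goao=\gobo$ as its first conjunct, which is exactly $(\goa=\gob)^{\circ}$. No appeal to the inductive hypothesis is even needed here, because the $\circ$-operation only looks at the first components. Next, for the propositional connectives: for $\wedge$ and $\vee$ I apply the inductive hypothesis to the subformulae realized by $(e)_0$ and/or $(e)_1$ and reassemble $(\phi\wedge\psi)^{\circ}=\phi^{\circ}\wedge\psi^{\circ}$ and likewise for $\vee$ (in the disjunction case the two branches of the definition directly give the two disjuncts). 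For $\neg\phi$ the clause explicitly carries $\neg\phi^{\circ}=(\neg\phi)^{\circ}$ as its first conjunct, so that case is immediate; and for $\phi\to\psi$ the clause explicitly carries $\phi^{\circ}\to\psi^{\circ}=(\phi\to\psi)^{\circ}$ as its first conjunct, again immediate.

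For the quantifier cases I would again exploit that each clause was engineered to contain the $\circ$-translation as a conjunct or to reduce to it. The bounded cases $(\forall x\in\goa)\phi$ and $(\exists x\in\goa)\phi$ are the cleanest: the universal bounded clause carries $(\forall x\in\goao)\phi^{\circ}$ explicitly, which is $((\forall x\in\goa)\phi)^{\circ}$; for the bounded existential clause I take the witness $\gob$ with $\paar{(e)_0,\gob}\in\goas$ and $(e)_1\trel\phi[x/\gob]$, apply the inductive hypothesis to get $\phi^{\circ}[x/\gob]$, and then use Lemma~\ref{fakt} to conclude $\gobo\in\goao$, so that $\gobo$ witnesses $(\exists x\in\goao)\phi^{\circ}$. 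For the unbounded quantifiers $\forall x\,\phi$ and $\exists x\,\phi$ I argue that $e\trel\forall x\,\phi$ gives $e\trel\phi[x/\goa]$ for every $\goa\in\vns_{\mathcal{A}}$, hence $\phi^{\circ}[x/\goa]$ for every such $\goa$ by the inductive hypothesis; since by Lemma~\ref{stets}(ii) every set $x$ appears as some $\goao$, this yields $\forall x\,\phi^{\circ}$, and the existential case is symmetric and easier.

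The main obstacle, and the only place demanding genuine care rather than routine unwinding, will be the two mutually-recursive atomic cases read together with the unbounded universal case. Two subtleties arise. First, the atomic clauses are not defined by a simple subformula recursion but by a simultaneous recursion between $\in$ and $=$ (each refers to the other on ``smaller'' realizer-arguments), so the induction must be organized on a well-founded measure — for instance the rank of the parameters in $\vns_{\mathcal{A}}$ together with formula complexity — rather than on naive subformula depth; I would make this measure explicit to justify the appeals to the inductive hypothesis inside $e\trel\goa=\gob$. Second, in the unbounded universal case the crucial point is precisely that the $\circ$-translation quantifies over \emph{all sets} $x$ while the realizability clause quantifies over all \emph{names} $\goa\in\vns_{\mathcal{A}}$; bridging this gap is exactly what Lemma~\ref{stets}(ii) supplies, and I would flag that this is where the surjectivity of the name assignment $\goa\mapsto\goao$ onto the universe of sets is used. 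Everything else is mechanical verification that the designers of Definition~\ref{real} front-loaded the truth conjunct into every clause.
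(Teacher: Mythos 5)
Your proof is correct and is essentially the argument that the paper outsources to \cite[Lemma 5.7]{R05}: a straightforward induction on the build-up of $\phi$, reading off the truth conjunct that each clause of Definition~\ref{real} was designed to carry, with Lemma~\ref{fakt} supplying the bounded-existential case. Two minor corrections. First, the bridging fact you need in the unbounded quantifier cases --- that every set $x$ occurs as $\goao$ for some $\goa\in\vns_{\mathcal{A}}$ --- follows from Lemma~\ref{stets}(iii) (take $\goa=\paar{x,\emptyset}$, whose second component vacuously satisfies the hypotheses), not from Lemma~\ref{stets}(ii), which only asserts $x\in\vvv$. Second, your worry about needing a well-founded measure for the mutually recursive atomic clauses is moot for this particular lemma: as you yourself note, the atomic cases read off their truth conjunct with no appeal to any inductive hypothesis, so a plain subformula induction suffices --- the transfinite recursion on $\vns_{\mathcal{A}}$ is needed only to see that Definition~\ref{real} is well defined, not to prove Lemma~\ref{Q1.3}.
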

 \begin{proof}
 See Lemma 5.7 in \cite{R05}.
\end{proof}

\begin{lemma}\label{negself} Negated formulas are self-realizing, that is
to  say, if $\psi$ is a statement with parameters in $\vns_{\mathcal{A}}$, then
$$\neg \psi^{\circ}\,\to\, \mathbf{0}\trel \neg\psi.$$ \end{lemma}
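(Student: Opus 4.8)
The plan is to unwind the realizability clause for negation from Definition \ref{real} and reduce the claim to the soundness fact recorded in Lemma \ref{Q1.3}. Recall that
$$\mathbf{0}\trel\neg\psi\quad\text{iff}\quad \neg\psi^{\circ}\;\wedge\;\forall f\,\neg(f\trel\psi).$$
Observe that the $\circ$-translation is a pure parameter substitution that leaves the logical skeleton of a formula untouched, so $(\neg\psi)^{\circ}$ is literally $\neg\psi^{\circ}$. Hence, under the hypothesis $\neg\psi^{\circ}$, the first conjunct of the right-hand side is already in hand, and the only real work is to verify the second conjunct $\forall f\,\neg(f\trel\psi)$.

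To establish the second conjunct I would fix an arbitrary $f$ and argue by contradiction: if $f\trel\psi$, then Lemma \ref{Q1.3} (realizability implies truth) yields $\psi^{\circ}$, contradicting the standing assumption $\neg\psi^{\circ}$. Thus $\neg(f\trel\psi)$, and since $f$ was arbitrary we obtain $\forall f\,\neg(f\trel\psi)$. Combining the two conjuncts gives $\mathbf{0}\trel\neg\psi$, which is exactly the desired conclusion.

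There is no substantial obstacle here; the lemma is essentially a formal corollary of Lemma \ref{Q1.3}. It is worth noting that the realizability condition for a negation does not mention the realizer $e$ on its right-hand side at all, so in fact \emph{any} element of $|\mathcal{A}|$ realizes $\neg\psi$ once $\neg\psi$ is realized; the choice of the canonical constant $\mathbf{0}$ is merely for definiteness. Finally, all inferences used---in particular the passage from ``an assumption entails $\bot$'' to its negation---are intuitionistically valid, so the argument is carried out entirely within $\CZF$.
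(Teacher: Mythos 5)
Your proof is correct and is essentially identical to the paper's own argument: both reduce the claim to the negation clause of Definition \ref{real} and then rule out any realizer $f$ of $\psi$ by appealing to Lemma \ref{Q1.3} against the hypothesis $\neg\psi^{\circ}$. Your added observation that any element of $|\mathcal{A}|$ would serve as the realizer, since the negation clause never mentions $e$, is accurate but not needed.
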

\begin{proof}
 Assume $\neg \psi^{\circ}$. From $f\trel \psi$ we would get
$\psi^{\circ}$ by Lemma \ref{Q1.3}. But this is absurd. Hence
$\forall f\,\neg f\trel \psi$, and therefore $\mathbf{0}\trel \neg\psi$.
\end{proof}

\begin{lemma}\label{id} 
There are  closed application terms
${\mathbf{i_r,i_s,i_t,i_0,i_1}}$ such that for all {\em
$\goa,\gob,\goc\in \vns$,}
\begin{enumerate}
\item $ \mathbf{i_r}\trel \goa=\goa$.
 \item $ \mathbf{i_s}\trel\goa=\gob\rightarrow \gob=\goa$.
 \item $ \mathbf{i_t}\trel(\goa=\gob\wedge \gob=\goc)\,\rightarrow\, \goa=\goc$.
 \item $\mathbf{i_0}\trel (\goa=\gob\wedge \gob\in \goc)\,\rightarrow\,
\goa\in \goc$.
 \item $\mathbf{i_1}\trel (\goa=\gob\wedge \goc\in\goa)\,\rightarrow\, \goc\in \gob$.
 \item Moreover, for each
formula $\varphi(v,u_1,\ldots,u_k)$ of $\CZF$ all of whose free
variables are among $v, u_1,\ldots,u_k$ there exists a closed
application term ${\mathbf{i}}_{\varphi}$ such that for all
$\goa,\gob,\goc_1,\ldots,\goc_k\in \vns_{\mathcal{A}}$,
 $${\mathbf{i}}_{\varphi}\trel \varphi(\goa,\vec{\goc})\,\wedge\, \goa=\gob \;\rightarrow\;
\varphi(\gob,\vec{\goc}),$$ where
$\vec{\goc}=\goc_1,\ldots,\goc_k$.
\end{enumerate}
\end{lemma}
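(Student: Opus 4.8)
The plan is to construct the required terms by the Recursion Theorem (Corollary \ref{Recursion}) and to verify each realizability claim by transfinite induction on the $\vns_{\mathcal{A}}$-rank of the parameters, i.e. by set induction. I would treat items (1)--(5) first, in the order reflexivity, symmetry, transitivity, and then (4)--(5); item (6) then follows by induction on the structure of $\varphi$, using (1)--(5) for the atomic cases.

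For (1), unfolding $e\trel\goa=\goa$ I must produce, for every $\paar{f,\goc}\in\goas$, a realizer of $\goc\in\goa$. Such a realizer is a pair whose first component is a code $g$ with $\paar{g,\god}\in\goas$ for some $\god$ and whose second realizes $\goc=\god$; taking $g=f$ and $\god=\goc$ reduces the task to realizing $\goc=\goc$, which is the same problem one rank lower. So by the Recursion Theorem I fix $\mathbf{i_r}$ with $(\mathbf{i_r})_0f\simeq(\mathbf{i_r})_1f\simeq\paar{f,\mathbf{i_r}}$ and check $\mathbf{i_r}\trel\goa=\goa$ by induction on the rank of $\goa$: the truth clause $\goao=\goao$ is trivial, $\goco\in\goao$ holds by Lemma \ref{fakt}, and $\mathbf{i_r}\trel\goc=\goc$ is the induction hypothesis since $\goc$ has smaller rank. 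Symmetry (2) needs no recursion: the two outer conjuncts in the definition of $\goa=\gob$ become, for $\gob=\goa$, exactly the same two conjuncts with their roles interchanged, so $\mathbf{i_s}:=\lambda e.\paar{(e)_1,(e)_0}$ works, the truth part being the symmetry of $=$ on the $\circ$-components.

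Transitivity (3) is the genuinely recursive and most delicate case. Given $e$ realizing $\goa=\gob\wedge\gob=\goc$, so $(e)_0\trel\goa=\gob$ and $(e)_1\trel\gob=\goc$, I must realize $\goa=\goc$; for $\paar{f,\god}\in\goas$ this means realizing $\god\in\goc$. Applying the first part of $(e)_0$ to $f$ yields a realizer of $\god\in\gob$, which exposes some $\paar{g,\goe}\in\gobs$ together with a realizer of $\god=\goe$; applying the first part of $(e)_1$ to $g$ yields a realizer of $\goe\in\goc$, exposing $\paar{h,\gof}\in\goc^*$ and a realizer of $\goe=\gof$. Composing $\god=\goe$ and $\goe=\gof$ is again a transitivity problem, one rank down, so I define $\mathbf{i_t}$ by the Recursion Theorem so that it calls itself on these lower-rank components, and verify $\mathbf{i_t}\trel(\goa=\gob\wedge\gob=\goc)\to\goa=\goc$ by induction on the ranks of $\goa,\gob,\goc$; the second outer conjunct (handling $\goc^*$) is symmetric. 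Once (1)--(3) are available, (4) and (5) are obtained by unfolding the membership clause: a realizer of $\gob\in\goc$ exposes $\paar{g,\god}\in\goc^*$ with a realizer of $\gob=\god$, and combining this with the realizer of $\goa=\gob$ via $\mathbf{i_t}$ gives $\goa=\god$ and hence $\goa\in\goc$, so $\mathbf{i_0}$ is built from $\mathbf{i_t}$, while $\mathbf{i_1}$ is built analogously from $\mathbf{i_0}$ and $\mathbf{i_s}$ by using the first conjunct of the realizer of $\goa=\gob$.

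For (6) I argue by induction on $\varphi$. The atomic cases $v\in\goc_i$, $\goc_i\in v$, $v=\goc_i$ and $\goc_i=v$ are exactly (4), (5) and (3)/(2); the propositional connectives are handled by combining the $\mathbf{i}_\psi$ for the immediate subformulas with $\bp$, $\bpn$, $\bpo$ and $\bd$; and the quantifier cases, including the bounded quantifiers which are treated as primitive, are handled by applying the term for the matrix uniformly under the relevant application, the $\circ$-translation clause being dispatched by Lemma \ref{Q1.3}, and negation being immediate from Lemma \ref{negself}. The whole construction can in fact be carried out by a single application of the (uniform) Recursion Theorem producing all of $\mathbf{i_r},\mathbf{i_s},\mathbf{i_t},\mathbf{i_0},\mathbf{i_1}$ at once. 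The main obstacle is the transitivity step (3): the realizer must thread two nested membership witnesses through a self-reference while the set-theoretic induction descends in rank, and one has to check that the recursion-theoretic self-application and the $\in$-induction are aligned so that every self-call is made only on parameters of strictly smaller rank.
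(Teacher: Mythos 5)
Your proposal is correct and follows essentially the same route as the paper, whose ``proof'' is just a citation of \cite[Lemma 5.12]{R05}: there, as in your argument, the realizers $\mathbf{i_r}$ and $\mathbf{i_t}$ are obtained from the Recursion Theorem and verified by $\in$-induction over $\vns_{\mathcal{A}}$, $\mathbf{i_0}$ and $\mathbf{i_1}$ are then assembled combinatorially from $\mathbf{i_t}$ and the components of the equality realizer, and item (6) is an induction on the structure of $\varphi$. Two cosmetic points: the truth conjuncts of the implication clauses are discharged by literal substitution using the set equality $\goao=\gobo$ (not really by Lemma \ref{Q1.3}), and your use of Lemma \ref{negself} to realize the negation case of (6) outright is a legitimate shortcut that is special to realizability \emph{with truth}.
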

\begin{proof}
See \cite[Lemma 5.12]{R05}. 
\end{proof}
\begin{definition}{\em The {\em extended bounded formulas} are the smallest class
of formulas containing the formulas of the form $x\in y$, $x=y$,
$e\trel x\in y$, $e\trel x=y$ (where $x,y$ are variables or
elements of $\vns_{\mathcal{A}}$) which is closed under
$\wedge,\vee,\neg,\rightarrow$ and bounded
quantification.}\end{definition}
\begin{lemma}\label{sepex} $(\CZF)$ Separation holds for extended bounded formulas, i.e., for
every extended bounded formula $\varphi(v)$ and set $x$, $\{v\in
x\,:\;\varphi(v)\}$ is a set.
\end{lemma}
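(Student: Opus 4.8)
The plan is to reduce the claim to $\CZF$'s Bounded Separation. Since bounded quantification and the propositional connectives preserve $\CZF$-boundedness, and the atoms $x\in y$, $x=y$ are already bounded, the only real work is to show that the two \emph{realizability atoms} $e\trel \goa\in\gob$ and $e\trel\goa=\gob$ are, provably in $\CZF$, equivalent to bounded formulas with suitable set parameters. Granting this, every extended bounded formula $\varphi(v)$ is equivalent to a bounded one, and $\{v\in x:\varphi(v)\}$ exists by Bounded Separation. So the whole lemma is essentially the statement that the atomic realizability relation is bounded-definable.

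The difficulty is that Definition \ref{real} introduces $\trel$ on atoms by a \emph{recursion}, so one cannot exhibit $e\trel\goa\in\gob$ as a single bounded formula of fixed quantifier depth. To handle this I would first note that the relation $\goc\prec\gob:\equiv\exists e\,\paar{e,\goc}\in\gobs$ is set-like (the $\prec$-predecessors of $\gob$ are the second projections of the set $\gobs$) and rank-decreasing, hence well-founded on the whole universe; indeed $\goc\prec\gob$ forces $\mathrm{rank}(\goc)<\mathrm{rank}(\gob)$. I would then read the clauses of Definition \ref{real} for $\in$ and $=$ as a single simultaneous recursion: computing $e\trel\goa\in\gob$ calls $=$ only at $(\goa,\goc)$ with $\goc\prec\gob$, whereas computing $e\trel\goa=\gob$ calls $\in$ only at $(\goc,\gob)$ with $\goc\prec\goa$ and at $(\goc,\goa)$ with $\goc\prec\gob$; in each case the ordinal $\mathrm{rank}(\goa)\oplus\mathrm{rank}(\gob)$ (natural sum) strictly decreases. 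Since $\CZF$ supports recursion along well-founded set-like relations, this defines class functions $G_\in,G_=$ with $G_\in(\goa,\gob)=\{e\in|\mathcal{A}|:e\trel\goa\in\gob\}$ and $G_=(\goa,\gob)=\{e\in|\mathcal{A}|:e\trel\goa=\gob\}$. The crucial point is that \emph{each stage is bounded}: given the values of $G$ at $\prec$-smaller pairs, the sets $G_\in(\goa,\gob)$ and $G_=(\goa,\gob)$ are carved out of the \emph{set} $|\mathcal{A}|$ by a bounded formula, because $\goao\in\gobo$ and $\goao=\gobo$ are bounded $\CZF$-formulas about ordinary sets, the quantifiers $\exists\goc\,(\paar{(e)_0,\goc}\in\gobs\wedge\dots)$ and $\forall\paar{f,\goc}\in\goas(\dots)$ are bounded by the sets $\gobs,\goas$, and the recursive calls reduce to membership in the already-available sets $G_=(\goa,\goc)$, $G_\in(\goc,\gob)$, $G_\in(\goc,\goa)$. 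Hence each stage is a set by Bounded Separation and the recursion goes through in $\CZF$.

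With $G_\in,G_=$ in hand I would localize. Given the instance $\{v\in x:\varphi(v)\}$, with parameters $\goe_1,\dots,\goe_k\in\vns_{\mathcal{A}}$, let $D$ be the closure of $x\cup\{\goe_1,\dots,\goe_k\}$ under both $\in$ and $\prec$; this is a set because $\CZF$ proves transitive closures exist and $\prec$ is set-like. By Strong Collection the set $r:=G\restriction(D\times D)$ (i.e.\ the graphs of $G_\in$ and $G_=$ on $D\times D$) exists. As $v$ ranges over $x$ and the bounded quantifiers of $\varphi$ range within $x$ and the $\goe_i$, every realizability atom occurring in $\varphi(v)$ has both arguments in $D$, so each such atom $e\trel s\in t$ (resp.\ $e\trel s=t$) is equivalent to $e\in G_\in(s,t)$ (resp.\ $e\in G_=(s,t)$), and since $(s,t)\in D\times D$ this value is recorded in $r$, making the condition bounded with parameter $r$. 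Substituting these bounded equivalents turns $\varphi(v)$ into a genuine bounded formula $\tilde\varphi(v)$ with parameters $|\mathcal{A}|,r,D$, whence $\{v\in x:\tilde\varphi(v)\}$ is a set by Bounded Separation.

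The main obstacle is precisely the second paragraph: checking that the recursive definition of the atomic realizability relation can be run in $\CZF$ with \emph{every} stage bounded, so that the restriction of $G$ to any set is again a set. Everything else — the well-foundedness of $\prec$, closure of bounded-definability under connectives and bounded quantifiers, the existence of the $\in$-and-$\prec$-closure $D$, and the final appeal to Bounded Separation — is routine.
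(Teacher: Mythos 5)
Your proof is correct and follows essentially the same route as the proof the paper defers to (its proof is just ``See \cite[Lemma 5.15]{R05}''): there as well, the crux is that the atomic relations $e\trel\goa\in\gob$ and $e\trel\goa=\gob$ arise from a well-founded, set-like recursion whose individual stages are subsets of $|\mathcal{A}|$ carved out by bounded formulas (with $|\mathcal{A}|$ and the application graph as parameters), so that their restrictions to any set are sets, and Separation for extended bounded formulas then reduces to Bounded Separation with these sets as parameters, exactly as in your final paragraph. The two facts you invoke as black boxes are genuine theorems of $\CZF$: recursion along well-founded set-like class relations is available via $\CZF$'s apparatus of inductively defined classes (the same machinery the paper invokes to build $\vns_{\mathcal{A}}$ in the first place), and the natural sum of ranks must be defined constructively by the recursion $\alpha\oplus\beta=\bigcup\{(\alpha'\oplus\beta)+1:\alpha'\in\alpha\}\cup\bigcup\{(\alpha\oplus\beta')+1:\beta'\in\beta\}$ rather than via Cantor normal forms, after which the monotonicity you need is immediate by induction.
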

 \begin{proof}
  See \cite[Lemma 5.15]{R05}.
 \end{proof}
\begin{proposition}[Soundness theorem]\label{soundness}
 Let $S$ any combination of
 the axioms and schemes
$\mathbf{Full\;Separation}$, $\mathbf{Powerset}$, $\mathbf{REA}$, 
 $\MP$, $\mathbf{AC}_{\omega}$, $\mathbf{DC}$, $\mathbf{RDC}$, and
 $\mathbf{PAx}$.
 Then, for every theorem $\theta$ of $\CZF+S$, there exists an application term $t$ such that
 $\CZF+S\vdash (t\trel \theta)$. In particular, $\CZF$, $\CZF+\REA$, $\IZF$, $\IZF+\REA$
 satisfy this property.
 Moreover, the proof of this soundness theorem is effective in that the application term
 $t$ can be constructed from the $\CZF+S$ proof of $\theta$.
\end{proposition}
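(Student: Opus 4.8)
The plan is to argue by induction on the length of a derivation of $\theta$ in $\CZF+S$, taking the underlying logic to be a fixed Hilbert- (or natural-deduction-) style calculus for intuitionistic predicate logic with equality. Because we are working with \emph{realizability with truth}, every clause of Definition \ref{real} carries a truth rider (the $\varphi^{\circ}$ conjuncts), so a single realizer must simultaneously witness realizability \emph{and} the classical reading $\theta^{\circ}$; Lemma \ref{Q1.3} guarantees the latter comes for free once realizability is established, and Lemma \ref{negself} will let us dispatch negated and decidable premises cheaply. The induction reduces to two tasks: (a) show that the logical axiom schemes are realized by closed application terms and that the inference rules preserve realizability, and (b) show that each nonlogical axiom of $\CZF$, and each scheme actually present in $S$, is realized by a term whose existence is provable in $\CZF+S$. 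The point of the last qualification is that the realizer for, say, $\mathbf{Powerset}$ will itself be manufactured using $\mathbf{Powerset}$ in the background theory, so the soundness statement is genuinely relativised to $\CZF+S$.

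For the logical part I would first observe that realizability of an implication is, up to the truth conjunct, function application, so \emph{modus ponens} is realized by the application of the two given realizers, and universal generalization is transparent because $e\trel\forall x\,\phi$ quantifies over all $\goa\in\vns_{\mathcal{A}}$ with a uniform $e$. The remaining propositional and quantifier axioms are realized by explicit combinators supplied by the Abstraction Lemma \ref{Abstraction}; I would construct $\lambda$-terms that shuffle the pairing/projection structure of $\mathbf{p},\bpn,\bpo$ in accordance with the clauses for $\wedge,\vee,\to,\exists,\forall$. The equality axioms (reflexivity, symmetry, transitivity, and replacement of equals in atomic and then arbitrary formulas) are handled wholesale by Lemma \ref{id}, in particular clause (6) which yields the term $\mathbf{i}_{\varphi}$ realizing congruence for every $\CZF$-formula $\varphi$; this is exactly the ingredient needed to promote replacement to compound and quantified contexts.

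For the set-theoretic axioms I would follow \cite{R05} and \cite{DR2} closely, verifying Extensionality, Pairing, Union, and Infinity by direct construction of the witnessing members of $\vns_{\mathcal{A}}$ using Lemma \ref{stets}(iii) and Lemma \ref{fakt}; Bounded Separation and the truth-carrying versions of $\mathbf{Full\;Separation}$ and $\mathbf{Powerset}$ rely on Lemma \ref{sepex}, which is precisely what makes the relevant subclasses of $\vns_{\mathcal{A}}$ into sets. Set Induction is realized by building the realizer as a fixed point via the Recursion Theorem (Corollary \ref{Recursion}) and then appealing to Set Induction in the background to see the fixed point is total on $\vns_{\mathcal{A}}$. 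The genuinely laborious cases are Strong Collection and Subset Collection: there one uses the corresponding background collection scheme applied to the (extended bounded) realizability predicate to gather witnesses, then assembles an element of $\vns_{\mathcal{A}}$ satisfying both the "every $y$ has a witness" and the "no junk" halves of the clause. The choice principles $\mathbf{AC}_{\omega}$, $\mathbf{DC}$, $\mathbf{RDC}$, $\mathbf{PAx}$, and $\REA$ are realized by invoking the same principle in $\CZF+S$ to select witnesses, uniformly extracting realizers along the chosen function; $\MP$ is realized using the decidability hypothesis together with the self-realizing character of negated statements (Lemma \ref{negself}) and unbounded search performed inside $\mathcal{A}$ via its $N$-structure.

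The main obstacle, and the place where this proof departs from \cite{R05}, is that the underlying $pca$ is now an \emph{arbitrary} total set model $\mathcal{A}$ rather than $\omega$ itself. Consequently every appeal to the arithmetic of $\omega$ in the original soundness proof -- most visibly in Infinity, in the numerical realizers, and in the Markov search for $\MP$ -- must be rerouted through the copy of the naturals given by $N$ and the constants $\mathbf{0},\mathbf{s}_N,\mathbf{p}_N$, using the internal induction axiom of $\appa$ to run recursions and bounded searches; I would isolate a small lemma asserting that $\{a\in|\mathcal{A}|:N(a)\}$ behaves as a standard model of arithmetic inside $\mathcal{A}$ and use it throughout. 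Finally, effectiveness is immediate from the shape of the argument: each inductive step produces its realizer by an explicit, primitive-recursive recipe from the realizers of the premises (or, at the leaves, a fixed closed term), so the map from a $\CZF+S$-proof of $\theta$ to the term $t$ is computable, as claimed.
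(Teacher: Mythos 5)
Your proposal is correct and takes essentially the same route as the paper, whose ``proof'' is simply a pointer to the soundness arguments of \cite[Theorems 6.1, 7.2, 9.1]{R05} and \cite[Theorem 7.4]{R08}: your induction on derivations---combinators for the logical axioms and rules, Lemma \ref{id} for equality, each $\CZF$ axiom and each scheme in $S$ realized by invoking that same principle in the background theory, and the arithmetic of $\omega$ rerouted through the $N$-structure of the set pca $\mathcal{A}$---is exactly what those cited proofs do, adapted as the paper intends. Your closing observation that the realizer is extracted primitive-recursively from the derivation likewise matches the paper's effectiveness claim.
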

\begin{proof}
This is proved in the same way as \cite[Theorem 6.1, Theorem 7.2 and Theorem 9.1]{R05} and
 \cite[Theorem 7.4]{R08}.
\end{proof}
 \section{Realizing $\IPR$ for finite types}

In this section, we prove $\CZF$ and several constructive set theories are closed under
$\IPR$ for finite types. We fix an applicative structure $\mathcal{A}$
  such that
$|\mathcal{A}|$ and its graph $\{ \langle a, b, c\rangle\in|\mathcal{A}|^3:ab=c\}$
of its application are sets, as $\RE(\omega)$ in Section \ref{graphmodel}.
In what follows, we use $a, b,\ldots ,i$ for elements of $\mathcal{A}$.
By $\SIA$, $\N=\{ x\in|\mathcal{A}|:N(x)\}$ is a set.

We need several properties of ordered pairs in $\vns_{\mathcal{A}}$.
For for any $\goa$ and $\gob$, define 
$\underline{\{ \goa, \gob\}}$ by
\begin{align*}
 \langle\{ \goa^{\circ},
\gob^{\circ}\}, \{ \langle \bp\zero g, \goa\rangle:g\trel
\goa=\goa\}\cup\{ \langle \bp \one g, \gob\rangle:g\trel \gob=\gob\}\rangle.
\end{align*}
By Lemma \ref{sepex},
we can prove that $\second(\underline{\{\goa,\gob \}})$ is a
set such that for each $x\in \second\underline{\{ \goa, \gob\}}$,
$\first(\second(x))\in\{ \goa^{\circ}, \gob^{\circ}\}$.
By Lemma \ref{stets} (iii), we have $\underline{\{
\goa,\gob\}}\in\vns_{\mathcal{A}}$.  
The following lemma shows that $\underline{\{ \goa, \gob\}}$ acts as the pair of $\goa$
and $\gob$ in $\vns_{\mathcal{A}}$.
\begin{lemma}[$\CZF$]\label{Pair}
There is a closed term $t$ such that
$t\trel \forall x(x\in\underline{\{ \goa,\gob\}}\leftrightarrow
x=\goa\lor x=\gob)$. 
\end{lemma}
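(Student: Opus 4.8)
The plan is to realize the biconditional as the conjunction of its two implications by a single closed term $t = \bp t_0 t_1$, where $t_0$ realizes the forward implication and $t_1$ the backward one, both uniformly in the bound variable $x$. Since $e\trel \forall x\,\psi$ unfolds to $\forall\goc\; e\trel \psi[x/\goc]$ and neither $t_0$ nor $t_1$ will depend on the instance $\goc$, it suffices to fix an arbitrary $\goc\in\vns_{\mathcal{A}}$ and verify that $t$ realizes $\goc\in\underline{\{\goa,\gob\}}\leftrightarrow \goc=\goa\lor\goc=\gob$. Throughout one must keep the applicative pairing $\bp$ with projections $\bpn,\bpo$ (which organize realizers) separate from the set-theoretic pairing $\paar{\cdot,\cdot}$ used inside $\second(\underline{\{\goa,\gob\}})$.

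For the forward implication, suppose $f\trel \goc\in\underline{\{\goa,\gob\}}$. Unfolding the clause for $\in$, there is $\god$ with $\paar{(f)_0,\god}\in\second(\underline{\{\goa,\gob\}})$ and $(f)_1\trel \goc=\god$. By the definition of $\second(\underline{\{\goa,\gob\}})$, this pair is either of the form $\paar{\bp\zero g,\goa}$ or of the form $\paar{\bp\one g,\gob}$; hence $(f)_0$ is $\bp\zero g$ with $\god=\goa$, or $\bp\one g$ with $\god=\gob$, and in either case the tag $\bpn((f)_0)=(f)_{00}$ equals $\zero$, respectively $\one$. I therefore take $t_0$ so that $t_0 f\simeq \bp\,(f)_{00}\,(f)_1$: its first component $(f)_{00}$ selects the correct disjunct and its second component $(f)_1$ realizes $\goc=\goa$ (resp. $\goc=\gob$), which is exactly what the clause for $\vee$ demands. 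The truth part of the implication is immediate, since membership of $\goc^{\circ}$ in $\{\goa^{\circ},\gob^{\circ}\}$ forces $\goc^{\circ}=\goa^{\circ}$ or $\goc^{\circ}=\gob^{\circ}$.

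For the backward implication, suppose $f\trel \goc=\goa\lor\goc=\gob$, so that either $(f)_0=\zero$ and $(f)_1\trel \goc=\goa$, or $(f)_0=\one$ and $(f)_1\trel \goc=\gob$. To produce a realizer of $\goc\in\underline{\{\goa,\gob\}}$ I need a set-theoretic pair of the prescribed shape in $\second(\underline{\{\goa,\gob\}})$, and the reflexivity realizer $\mathbf{i_r}$ from Lemma \ref{id}(1) supplies the required witness $g\trel \goa=\goa$ (resp. $g\trel \gob=\gob$). Accordingly I take $t_1$ with $t_1 f\simeq \bp\,(\bp\,(f)_0\,\mathbf{i_r})\,(f)_1$: its first component $\bp\,(f)_0\,\mathbf{i_r}$ is $\bp\zero\mathbf{i_r}$ or $\bp\one\mathbf{i_r}$ according to the chosen disjunct, so $\paar{(t_1 f)_0,\god}$ lands in $\second(\underline{\{\goa,\gob\}})$ with $\god=\goa$ or $\god=\gob$, while $(t_1 f)_1=(f)_1$ realizes $\goc=\god$. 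The truth part follows from Lemma \ref{Q1.3} applied to $(f)_1$.

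Both $t_0$ and $t_1$ are obtained from the Abstraction Lemma and mention only the closed term $\mathbf{i_r}$ and the combinators, so $t=\bp t_0 t_1$ is closed and independent of $\goc$, completing the argument. I expect the only real difficulty to be bookkeeping rather than mathematics: one must track which layer of pairing is applicative and which is set-theoretic, and confirm that the tags $\bp\zero,\bp\one$ built into $\second(\underline{\{\goa,\gob\}})$ line up correctly with the disjunction tags $\zero,\one$ under the projections $\bpn,\bpo$.
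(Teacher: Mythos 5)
Your proof is correct, and it takes essentially the route the paper intends: the paper's ``proof'' is only a citation to the (\textbf{Pair}) and (\textbf{Bounded Separation}) cases of \cite[Theorem 6.1]{R05}, and what you have written out is precisely the direct verification hiding behind that citation --- splitting the biconditional as $\bp t_0 t_1$, reading off the disjunction tag from the first projection of the membership witness in one direction, and re-injecting the tag together with the reflexivity realizer $\mathbf{i_r}$ in the other. Your handling of the truth components (via the pairing structure of $\{\goa^{\circ},\gob^{\circ}\}$ and Lemma \ref{Q1.3}) is also exactly what is needed, so the argument stands as a self-contained replacement for the reference.
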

  \begin{proof}
   See ({\bf Pair}) and ({\bf Bounded Separation}) in the proof of \cite[Theorem 6.1]{R05}.
  \end{proof}

  We often write $\underline{\{ \goa\}}$ for
  $\underline{\{\goa,\goa\}}$. 
For ordered pair, we write $\underline{\langle \goa,\gob\rangle}$ for
$\underline{\{\underline{\{ \goa\}},
\underline{\{\goa,\gob\}}\}}$.


\begin{lemma}[$\CZF$]\label{op}
 There are closed terms $\t_{\rm op}$ and $\t_{\rm op'}$ such that
  \begin{align*}
   & \forall \god,
   \god', \goe, \goe'(\t_{\rm op}\trel
   \god=\mathfrak{d'}\land \goe=\goe'\to
   \underline{\langle\god,\goe\rangle}=
   \underline{\langle\god',\goe'\rangle}), \\
   & \forall \god, \god', \goe, \goe'
   (\t_{\rm op'}\trel
   \underline{\langle\god,\goe\rangle}=
   \underline{\langle\god',\goe'\rangle}\to
   \god=\mathfrak{d'}\land \goe=\goe'
   ).
  \end{align*}
\end{lemma}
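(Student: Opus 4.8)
The plan is to reduce both claims to the Soundness Theorem (Proposition~\ref{soundness}) together with the characterisation of $\underline{\{\goa,\gob\}}$ supplied by Lemma~\ref{Pair}, rather than to unwind the realizability clauses by hand. Write $\mathrm{P}(p,u,v)$ for the formula $\forall x\,(x\in p\leftrightarrow x=u\lor x=v)$, so that Lemma~\ref{Pair} furnishes one closed term $t_{\mathrm P}$ with $t_{\mathrm P}\trel \mathrm{P}(\underline{\{\goa,\gob\}},\goa,\gob)$ for all $\goa,\gob\in\vns_{\mathcal{A}}$ uniformly, and likewise $t_{\mathrm P}\trel\mathrm{P}(\underline{\{\goa\}},\goa,\goa)$. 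The key observation is that the Kuratowski ordered–pair laws, once stated with these membership predicates taken as hypotheses, are pure theorems of $\CZF$ about arbitrary sets satisfying them, so they may be fed to the soundness machine and the external witnesses then plugged in through $t_{\mathrm P}$.

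For (op) I would let $\theta_{\mathrm{op}}$ be the $\CZF$–sentence
\[
\forall u\,v\,u'\,v'\,s_1\,s_2\,s_1'\,s_2'\,p\,p'\,\bigl[\Phi \wedge u=u'\wedge v=v'\to p=p'\bigr],
\]
where $\Phi$ is the conjunction of the six atoms $\mathrm{P}(s_1,u,u)$, $\mathrm{P}(s_2,u,v)$, $\mathrm{P}(p,s_1,s_2)$ and their primed analogues. This is provable in $\CZF$ by Extensionality alone (the predicates pin the witnesses down up to equality, and equal sets have equal pairs), so by Proposition~\ref{soundness} there is a closed term $s$ with $s\trel\theta_{\mathrm{op}}$. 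Since realizing a universal statement is uniform in the instances, $s$ realizes every instantiation; specialising the bound variables to $\god,\goe,\god',\goe'$ and to $\underline{\{\god\}},\underline{\{\god,\goe\}},\underline{\langle\god,\goe\rangle}$ (and their primes), and feeding $s$ the conjunction realizer assembled by pairing the six copies of $t_{\mathrm P}$ with a hypothesis realizer of $\god=\god'\wedge\goe=\goe'$, yields a realizer of $\underline{\langle\god,\goe\rangle}=\underline{\langle\god',\goe'\rangle}$. Abstracting on the hypothesis realizer then defines $\t_{\rm op}$, and the truth component of the implication holds because $\underline{\langle\god,\goe\rangle}^{\circ}$ is literally the genuine Kuratowski pair of $\god^{\circ}$ and $\goe^{\circ}$.

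Part (op$'$) is handled by the same template with $\theta_{\mathrm{op}'}$ obtained by moving $p=p'$ into the hypotheses and $u=u'\wedge v=v'$ into the conclusion. The only extra point to check is that the ordered–pair property is intuitionistically derivable: the disjunctions one needs (e.g.\ $\{d\}=\{d'\}$ or $\{d\}=\{d',e'\}$) are exactly the instances of the biconditionals in $\Phi$, so each case split is constructively available, and carrying out the standard two–stage argument (first $d=d'$, then $e=e'$, the latter with an inner split on $e=d$ versus $e=e'$) never appeals to excluded middle. Hence $\theta_{\mathrm{op}'}$ is a $\CZF$–theorem, soundness gives its realizer $s'$, and instantiating the witnesses through $t_{\mathrm P}$ as before produces $\t_{\rm op'}$, its truth component again coming from the genuine pair property applied to the $(\cdot)^{\circ}$ components.

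The main obstacle I anticipate is bookkeeping rather than conceptual: organising the sixfold and sevenfold conjunction realizers and the nested instantiations so that the single closed term supplied by Proposition~\ref{soundness} can be uniformly supplied with copies of $t_{\mathrm P}$ and the hypothesis realizer. One must also discharge the $(\cdot)^{\circ}$–truth clause of each realized implication separately, which is where Lemma~\ref{Q1.3} and the explicit shape $\underline{\{\goa,\gob\}}^{\circ}=\{\goa^{\circ},\gob^{\circ}\}$ enter; but no genuinely new realizability computation is needed beyond Lemma~\ref{Pair}. A direct unfolding of the equality clause of Definition~\ref{real}, branching on the tags $\bp\zero g$ and $\bp\one g$ via $\bd$, is also possible, but it duplicates work already packaged in the Soundness Theorem and is messier.
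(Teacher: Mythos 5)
Your proposal is correct and takes essentially the same route as the paper: the paper's own proof consists of the single line ``This is implied by Proposition \ref{soundness},'' and your argument --- formulating the Kuratowski pair laws as closed $\CZF$ theorems, realizing them via the Soundness Theorem, and instantiating through the uniform pairing realizer of Lemma \ref{Pair}, with the truth components discharged by the explicit shape of $\underline{\langle\cdot,\cdot\rangle}^{\circ}$ --- is precisely the elaboration that the one-line citation leaves implicit. Your observation that the injectivity direction is intuitionistically derivable (the case splits coming from the pairing biconditionals, not excluded middle) is exactly the point needed for that citation to be legitimate.
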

\begin{proof}
This is implied by Proposition \ref{soundness}. 
\end{proof}

 Let $\Psi(\goa)$ be as follows: 
\begin{multline*}
 \forall a \forall \goe\forall \god(
 \langle a, \god\rangle\in \goa^*\land 
 \langle a, \goe\rangle\in \goa^*\to
 \god=\goe)\land \\
\exists b\forall \langle h, \goh\rangle, \langle h',
 \goh'\rangle\in\goa^*(\exists c(c\trel
 \goh=\goh')\to bhh'\trel\goh=\goh').
\end{multline*}
An intuitive idea for $\Psi(\goa)$ is that each element of $\goa^*$ is
injectively indexed by some element of $|\mathcal{A}|$ and $\goa$ has a canonical
realizer for the equality between its elements.
   \begin{lemma}[$\CZF$]\label{mainlemma}
  For each $\goa$ and $\gob$, there is $\goc$ such that
 \begin{align*}
  \forall \goa\forall\gob[
  \Psi(\goa)\land
  \Psi(\gob)\to 
  \exists\goc(\Psi(\goc)\land
  \exists a(a\trel \forall x(x\in \goc\leftrightarrow x\in
  \Func(\goa,\gob))))]. 
 \end{align*}
\end{lemma}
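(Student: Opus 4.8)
The plan is to construct the witness $\goc$ explicitly as the ``realizability function space'' from $\goa$ to $\gob$, and then verify the three requirements separately: $\goc\in\vns_{\mathcal{A}}$, the realized equivalence $\forall x(x\in\goc\leftrightarrow x\in\Func(\goa,\gob))$, and $\Psi(\goc)$. For the first component I set $\goc^{\circ}$ to be the function space $\{x:x\in\Func(\goa^{\circ},\gob^{\circ})\}$, which is a set in $\CZF$ (cf.\ the use of function spaces in the proof of Lemma \ref{type}). For the second component, the key point is that, since $\Psi(\goa)$ holds, every domain element $\god$ carries an index $a$ with $\langle a,\god\rangle\in\goa^{*}$, and from such an $a$ one builds \emph{uniformly} a realizer of $\god\in\goa$, namely $\bp a\,\mathbf{i_r}$: here $\goco\in\goao$ comes from Lemma \ref{fakt} and $\mathbf{i_r}\trel\god=\god$ from Lemma \ref{id}. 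Hence an element $f\in|\mathcal{A}|$ can be made to act as a function $\goa\to\gob$: feeding $f$ the membership realizer built from $a$ returns a value in $\gob$ together with its $\gob^{*}$-index. For each such $f$ I define a graph element $\goe_{f}\in\vns_{\mathcal{A}}$ whose realizability graph pairs each indexed $\god$ with the $\gob$-element so obtained, using the ordered-pair term $\underline{\langle\cdot,\cdot\rangle}$ and Lemmas \ref{Pair}, \ref{op}. I then let $D$ be the set of $f$ for which $\goe_{f}^{\circ}\in\Func(\goa^{\circ},\gob^{\circ})$ (an extended bounded condition, so Lemma \ref{sepex} applies) and put $\goc^{*}=\{\langle f,\goe_{f}\rangle:f\in D\}$, a set by Lemma \ref{sepex} together with Strong Collection.

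With $\goc$ so defined, $\goc\in\vns_{\mathcal{A}}$ follows from Lemma \ref{stets}(iii): each $z=\langle f,\goe_{f}\rangle\in\goc^{*}$ satisfies $\first(\second(z))=\goe_{f}^{\circ}\in\goc^{\circ}$ by the defining condition on $D$. The easy half of the realized biconditional is the direction $x\in\goc\to x\in\Func(\goa,\gob)$: by construction each $\goe_{f}$ (for $f\in D$) is, verifiably under $\trel$, single-valued and total from $\goa$ to $\gob$, so a single closed term obtained from the pairing and abstraction apparatus (Lemmas \ref{Abstraction}, \ref{Pair}, \ref{op}) realizes this implication uniformly in $f$.

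The harder half is $x\in\Func(\goa,\gob)\to x\in\goc$. Given $g\trel\goe\in\Func(\goa,\gob)$, I would read off from $g$ a code $f$: on an index $a$ of a domain element $\god$, let $f$ first build the membership realizer $\bp a\,\mathbf{i_r}$ for $\god\in\goa$ as above, apply the totality part of $g$ to it, and return the resulting $\gob^{*}$-index; the Recursion Theorem (Corollary \ref{Recursion}) and the Abstraction Lemma (Lemma \ref{Abstraction}) produce such an $f$ uniformly. One then checks $f\in D$ and that $\goe_{f}$ is realizably equal to $\goe$, the equality realizer being assembled from $g$ together with the canonical equality realizer $b$ supplied by $\Psi(\gob)$; finally $\langle f,\goe_{f}\rangle\in\goc^{*}$ witnesses $\goe\in\goc$.

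It remains to verify $\Psi(\goc)$. Functionality of the indexing of $\goc^{*}$ is immediate, since $\goe_{f}$ is uniquely determined by $f$. The main obstacle, and the technical heart of the proof, is producing the canonical equality realizer required by the second conjunct of $\Psi(\goc)$: a single $b'$ with $b'hh'\trel\goe_{h}=\goe_{h'}$ whenever $\goe_{h}=\goe_{h'}$ is realizable at all. Since equality of the function elements $\goe_{h},\goe_{h'}$ is extensional, $b'$ must run over the domain via the $\goa^{*}$-indexing guaranteed by $\Psi(\goa)$ and, at each argument, certify equality of the two values in $\gob$ using the canonical $\gob$-realizer $b$ from $\Psi(\gob)$; the delicate point is that this be done \emph{uniformly}, i.e.\ independently of the particular realizer $c$ witnessing $\goe_{h}=\goe_{h'}$. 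I expect this uniform assembly --- realizing the two ``subset'' directions of the equality simultaneously for all arguments --- to be where the bulk of the combinatory bookkeeping (again via Lemmas \ref{Abstraction}, \ref{id} and Corollary \ref{Recursion}) is concentrated.
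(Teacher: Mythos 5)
Your construction has the same skeleton as the paper's proof: take $\goc^{\circ}$ to be the function space $\Func(\goa^{\circ},\gob^{\circ})$, index $\goc^{*}$ by codes $f\in|\mathcal{A}|$ acting through the $\goa^{*}$- and $\gob^{*}$-indexings, and verify $\vns_{\mathcal{A}}$-membership, the realized biconditional, and $\Psi(\goc)$. But the membership condition you impose on codes is a genuine gap, not deferred bookkeeping. You admit $f$ into $D$ when the \emph{truth-level} graph $\goe_{f}^{\circ}$ is a function from $\goa^{\circ}$ to $\gob^{\circ}$. The paper instead requires a \emph{realizability-level} condition $f\in(\goa\Rightarrow\gob)$: whenever $\langle d,\god\rangle,\langle d',\god'\rangle\in\goa^{*}$ and $\i_{\goa}dd'\trel\god=\god'$, the values $fd,fd'$ must be $\gob^{*}$-indices whose elements are identified by the canonical realizer $\i_{\gob}(fd)(fd')$. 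Your condition does not imply this: realizable equality is strictly stronger than equality of $\circ$-parts, so $D$ may contain codes that send canonically equal arguments to values of $\gob$ agreeing at the truth level but not realizably equal, and even codes for which $fd$ fails to be a $\gob^{*}$-index at some index $d$.

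This breaks exactly the step you flag as the technical heart, the canonical realizer for $\Psi(\goc)$. Take $\goa^{*}=\{\langle d_{1},\god_{1}\rangle,\langle d_{2},\god_{2}\rangle\}$ with $\god_{1},\god_{2}$ realizably equal, and $\gob^{*}=\{\langle e_{1},\goi_{1}\rangle,\langle e_{2},\goi_{2}\rangle\}$ with $\goi_{1}^{\circ}=\goi_{2}^{\circ}$ but $\goi_{1},\goi_{2}$ not realizably equal; $\Psi(\goa)$ and $\Psi(\gob)$ permit this, the second clause of $\Psi(\gob)$ being vacuous for such a pair. Taking the indices to be distinct numerals, definition by cases ($\bd$) provides codes $h$ with $hd_{1}=e_{1}$, $hd_{2}=e_{2}$ and $h'$ with $h'd_{1}=e_{2}$, $h'd_{2}=e_{1}$; both lie in your $D$. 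Now $\goe_{h}=\goe_{h'}$ \emph{is} realizable, because the witnessing realizer matches the pair of $\goe_{h}^{*}$ at $d_{1}$ with the pair of $\goe_{h'}^{*}$ at $d_{2}$ (using a realizer of $\god_{1}=\god_{2}$) and vice versa. But your proposed canonical realizer, which at a common index $d$ certifies equality of the two values via $\i_{\gob}(hd)(h'd)$, is being asked to realize $\goi_{1}=\goi_{2}$, which nothing realizes; and, being uniform, it has no access to the cross-index matching hidden in the hypothesized realizer $c$. The same defect infects your ``easy half'': realizing that each $\goe_{f}$, $f\in D$, is a total function on $\goa$ requires $fd$ to be a $\gob^{*}$-index for \emph{every} $\goa^{*}$-index $d$, which $D$ does not guarantee. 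The repair is precisely the paper's move: build totality and extensionality with respect to the canonical realizers $\i_{\goa},\i_{\gob}$ into the defining condition of $\goc^{*}$ (this condition is still equivalent to an extended bounded formula, so Lemma \ref{sepex} and Strong Collection still yield a set); with that change the rest of your outline goes through essentially as in the paper.
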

   \begin{proof}
    Assume $\Psi(\goa)\land \Psi(\gob)$.
    Take $\i_{\goa}$ and $\i_{\gob}$ such that
\begin{align*}
 & \forall \langle h, \goh\rangle, \langle h', \goh'\rangle\in \goa^*
 (\exists a(a\trel \goh=\goh')\to\i_{\goa}
 hh'\trel\goh=\goh'), \\
 & \forall \langle h, \goh\rangle, \langle h', \goh'\rangle\in \gob^*
 (\exists a(a\trel \goh=\goh')\to\i_{\gob}
 hh'\trel\goh=\goh'). 
\end{align*}
    For each $f\in|\mathcal{A}|$, define $\hat{f}$ and
    $f\in (\goa\Rightarrow\gob)$ as follows:
 \begin{align*}
 & \hat{f}=\{ \langle d, \underline{\langle \god,
   \goe\rangle}\rangle:\langle d,\god\rangle\in \goa^*\land 
  \exists e(fd=e\land \langle e, \goe\rangle\in \gob^*)\}, \\
  & f{\in}(\goa\Rightarrow\gob)\equiv
    \forall d, d'\forall \god,\god'(
  \langle d,\god\rangle\in\goa^*\land
  \langle d', \god'\rangle\in\goa^*\land
  \i_{\goa}dd'\trel \god=\god'\to \\
  & \hspace{30mm}\exists e,e'\exists \goe,\goe'(fd=e\land fd'=e'\land
  \langle e, \goe\rangle\in \gob^*\land
  \langle e',\goe'\rangle\in\gob^*\land
  \i_{\gob}ee'\trel \goe=\mathfrak{e'}), \\
  & f^{\it real}=\{ \langle
  \god^{\circ},\goe^{\circ}\rangle:
  \exists d(\langle d,\underline{\langle\god,\goe\rangle}\rangle\in \hat{f})\}.
 \end{align*}
    Then $\hat{f}$ is a set by Bounded Separation and $f\in(\goa\Rightarrow
    \gob)$ is equivalent to an extended bounded formula.
    Define $\goc$ by
\begin{align*}
 & \goc=
  \langle (\gob^{\circ})^{\goa^{\circ}}, \{\langle f,\langle f^{\it real},
  \hat{f}\rangle\rangle: f\in(\goa\Rightarrow\gob)\}\rangle.
 \end{align*}
    Then $\second(\goc)$ is a set by Strong Collection.
    For each $x\in\second(\goc)$, it has the form $\first(\second(x))=f^{\it real}$
    for some $f$ such that $f\in(\goa\Rightarrow\gob)$.
    Since $\gob$ satisfies
    $\forall a\forall \goe\forall\goe'(\langle a,\goe\rangle\in \gob^*\land\langle
    a,\goe'\rangle\in\gob^*\to \goe=\goe')$,
    we have $\first(\second(x))\in(\gob^{\circ})^{\goa^{\circ}}$. 
    Hence $\goc\in\vns_{\mathcal{A}}$.
    Then, it is easy to see that
\begin{align*}
 & \forall x(x\in\goc^{\circ}\leftrightarrow x\in x\in\Func(\goa^{\circ},\gob^{\circ})) &
 &\text{and} &
 & \forall a \forall \god\forall \god'(
 \langle a, \god\rangle\in \goc^*\land
 \langle a, \god'\rangle\in \goc^*\to
 \god=\god').
\end{align*}
    Define ${\bf i}_{\goc}$ as follows:
    \begin{align*}
       & \i_{\goc}=\lambda fg.\bp
  (\lambda h. \bp h(\t_{\rm op}(\bp(\i_{\goa}hh)(\i_{\gob}(fh)(gh)))))
  (\lambda h. \bp h(\t_{\rm op}(\bp(\i_{\goa}hh)(\i_{\gob}(gh)(fh))))).
 \end{align*}
   We show that 
    $\forall f, g\forall \gof,\gog
    (\langle f, \gof\rangle\in \goc^*\land
    \langle g,\gog\rangle\in\goc^*\land
    \exists b(b\trel\gof=\gog)\to
    \i_{\goc}fg\trel \gof=\gog)$.
    Assume $\langle f,\gof\rangle\in \goc^*$,
    $\langle g,\gog\rangle\in\goc^*$ and
    $b\trel \gof=\gog$.
    Then
    \begin{align*}
     & \gof^{\circ}=\gog^{\circ}\land
     \forall \langle
     h,\underline{\langle \goh,\goi\rangle}\rangle\in\gof^*
     \exists \underline{\langle \mathfrak{h'},
     \goi'\rangle}(\langle ((b)_0h)_0, \underline{\langle \mathfrak{h'},
     \goi'\rangle}\rangle\in\gog^*\land
     ((b)_0h)_1\trel
     \underline{\langle \goh,\goi\rangle}=
     \underline{\langle \goh',\goi'\rangle}).
    \end{align*}
    Fix $\langle h, \underline{\langle \goh,\goi\rangle}\rangle\in \gof^*$
    and take $\underline{\langle \goh',\goi'\rangle}\in \gog^*$
    such that $\langle ((b)_0h)_0, \underline{\langle \mathfrak{h'},
     \goi'\rangle}\rangle\in\gog^*\land
     ((b)_0h)_1\trel
     \underline{\langle \goh,\goi\rangle}=
     \underline{\langle \goh',\goi'\rangle}$.
    Then there are application terms $p$ and $q$ such that
    $p\trel\goh=\goh'$ and
    $q\trel\goi=\goi'$ by Lemma \ref{op}.
    Since $\langle h,\underline{\langle \goh,\goi\rangle}\rangle\in
    \gof^*$ implies $\langle h, \goh\rangle\in \goa^*$ and
    $\langle fh, \goi\rangle\in \gob^*$ and since
    $\langle ((b)_0h)_0,\underline{\langle \goh',\goi'\rangle}\rangle\in
    \gog^*$ implies
    $\langle ((b)_0h)_0, \goh'\rangle\in \goa^*$ and
    $\langle g((b)_0h)_0, \goi'\rangle\in\gob^*$,
    we have $\i_{\goa}h((b)_0h)_1\trel \goh=\goh'$ and
    $\i_{\gob}(fh)(g((b)_0h)_0)\trel \goi=\goi'$.
    By $g\in(\goa\Rightarrow\gob)$ and
    $\langle h, \goh\rangle\in\goa^*$,
    there is $\goi''$
    such that $\langle gh, \goi''\rangle\in \gob^*$,
    ${\bf i}_{\gob}(gh)(g((b)_0h)_0)\trel \goi''=\goi'$.
    Then we can construct $r$ such that $r\trel \goi=\goi''$
    by using ${\bf i}_s$ and ${\bf i}_t$ and so
    ${\bf i}_{\gob}(fh)(gh)\trel \goi=\goi''$.
    Then we have
    \begin{align*}
     \exists \goi''(\langle h,\underline{\langle
    \goh,\goi''\rangle}\rangle\in \gog^*\land
    \t_{\rm op}(\bp(\i_{\goa}hh)(\i_{\gob}(fh)(gh)))\trel
    \underline{\langle\goh,\goi\rangle}=\underline{\langle
    \goh,\goi'' \rangle}).
    \end{align*}
    In a similar way, we can show that, for each $\langle h,\underline{\langle
    \goh,\goi\rangle}\rangle\in \gog^*$,
    \begin{align*}
     \exists \goi''(\langle h,\underline{\langle
    \goh,\goi''\rangle}\rangle\in \gof^*\land
    \t_{\rm op}(\bp(\i_{\goa}hh)(\i_{\gob}(gh)(fh)))\trel
    \underline{\langle\goh,\goi\rangle}=\underline{\langle
    \goh,\goi'' \rangle}).
    \end{align*}
    Therefore, $\goi_{\goc}$ defined as above gives
    $\goi_{\goc}fg\trel \gof=\gog$.

    \medskip
    
    To show $\exists a(a\trel \forall x(x\in\goc\leftrightarrow
    x\in \Func(\goa,\gob)))$, 
    we have to construct $s$ and $t$ such that, for any $\gof$
\begin{align*}
& s\trel \gof\in\goc\to
 \forall y\in \goa\exists !z\in \gob(\langle
 y,z\rangle\in \gof \land
  \forall w\in \gof\exists y\in\goa\exists z\in\gob(w=\langle y,
 z\rangle)), \text{and} \\ 
 & t\trel
  \forall y\in \goa\exists !z\in \gob(\langle
 y,z\rangle\in \gof \land
  \forall w\in \gof\exists y\in\goa\exists z\in\gob(w=\langle y,
 z\rangle)) \to \gof\in\goc.
\end{align*}

First we construct $s$ with the above property.
 Assume that $a\trel \gof\in \goc$.
 Then,
 \begin{align}
  & a\trel \gof\in \goc \nonumber\\
  \leftrightarrow&
  \gof^{\circ}\in \Func(\goa,\gob)\land
  \exists\gog
  (\langle (a)_0, \gog\rangle\in\goc^*\land
  (a)_1\trel \gof=\gog) \nonumber \\ 
  \leftrightarrow &
  \gof^{\circ}\in \Func(\goa,\gob)\land \exists\gog
  (\langle (a)_0, \gog\rangle\in\goc^*\land
  \gof^{\circ}=\gog^{\circ} \nonumber \\
  & \qquad \land
  \forall h\forall \goh(\langle h,\goh\rangle\in \gof^*\to
  (a)_{10}h \trel \goh\in \gof) \nonumber \\
  & \qquad \land
  \forall h\forall \goh(\langle h,\goh\rangle\in
  \gog^*\to 
  (a)_{11}h \trel \goh\in \gog)) \nonumber \\
  \leftrightarrow &
  \gof^{\circ}\in \Func(\goa,\gob)\land \exists\gog
  (\langle (a)_0, \gog\rangle\in\goc^*\land
  \gof^{\circ}=\gog^{\circ}\nonumber \\
  &\qquad \land\forall h\forall \goh(\langle h,\goh\rangle\in \gof^*\to
  \goh^{\circ}\in\gog^{\circ}\land
  \exists\goi(\langle ((a)_{10}h)_0,\goi\rangle\in \gog^*\land
  ((a)_{10}h)_1\trel \goh= \goi)) \label{**}\\ 
  &\qquad \land\forall h\forall \goh(\langle h,\goh\rangle\in
  \gog^*\to \goh^{\circ}\in \gof^{\circ}\land 
  \exists \goi(\langle((a)_{11}h)_0,\goi\rangle\in \gof^*\land
  ((a)_{11}h)_1 \trel \goh=\goi)))\label{****},
 \end{align}
and so take $\gog$ such that
\begin{align}
 \langle (a)_0,\gog\rangle\in \goc^*\land (a)_1\trel
    \gof=\gog. \label{***}
\end{align}

 Since $\forall d\forall \god(
  \langle d,\god\rangle\in \goa^* \to 
  \exists e\exists \goe (ad=e\land
    \langle e,\goe\rangle\in\gob^*\land
    \langle d, \underline{\langle
    \god,\goe\rangle}\rangle\in\gog^*))$, we have, by
    (\ref{****}),
    \begin{align*}
  \forall d\forall \god(
  \langle d,\god\rangle\in \goa^* \to 
  \exists e\exists \goe (ad=e\land
  \langle e,\goe\rangle\in\gob^*\land
  \exists \goi(\langle ((a)_{11}d)_0,\goi\rangle\in \gof^*\land
  ((a)_{11}d)_1\trel \underline{\langle \god,\goe\rangle}=\goi))).
    \end{align*} 
Therefore $s'\equiv \lambda d.(a)_{11}d$ satisfies, for $a$ such that
    $a\trel\gof\in\goc$, 
   \begin{align*}
   s'a\trel \forall y\in\goa\exists z\in\gob(\langle y,z\rangle\in\gof).
  \end{align*}
    
    Assume that $\langle d,\god\rangle\in\goa^*$,
    $\langle e,\goe\rangle,\langle e',\goe'\rangle\in\gob^*$ and
    $b\trel \underline{\langle \god,\goe\rangle}\in \gof\land
    \underline{\langle \god,\goe'\rangle}\in\gof$ and 
    take $\goh$ and $\goi$ such that
 \begin{align*}
  & \langle (b)_{00}, \goh\rangle\in \gof^*\land
  (b)_{01}\trel \underline{\langle \god,\goe\rangle}=\goh, &
  & \langle (b)_{10}, \goi\rangle\in \gof^*\land
  (b)_{11}\trel \underline{\langle \god,\goe'\rangle}=\goi. 
 \end{align*}
    Then, by (\ref{**}) and (\ref{***}),
    we have $\goj$, $\goj'$, $\mathfrak{k}$ and
    $\mathfrak{k}'$ such that  
    \begin{align*}
  &\goh^{\circ}\in\gog^{\circ}\land
  \langle ((a)_{10}(b)_{00})_0,\underline{\langle \goj,\mathfrak{k}\rangle }\rangle\in \gog^*\land
     ((a)_{10}(b)_{00})_1\trel \goh= \underline{\langle \goj,\mathfrak{k}\rangle},\\
    &   \goi^{\circ}\in\gog^{\circ}\land
  \langle ((a)_{10}(b)_{10})_0,\underline{\langle \goj', \mathfrak{k}'\rangle} \rangle\in \gog^*\land
  ((a)_{10}(b)_{10})_1\trel \goi= \underline{\langle \goj', \mathfrak{k}'\rangle}.
    \end{align*}
    Define $p_0$, $p_1$, $p_2$, $p_3$, $p_4$, $p_5$, $p_6$, $p_7$ and $p_8$ be as follows:
    \begin{align*}
     & p_0\equiv \lambda ab.(a)_{10}(b)_{00}, &
     & p_1\equiv \lambda ab.(a)_{10}(b)_{10}, & \\
     & p_2\equiv \lambda ab. \mathbf{i_t}(\bp(b)_{01}(p_0ab)_1), &
     & p_3\equiv \lambda ab. \mathbf{i_t}(\bp(b)_{11}(p_1ab)_1),&\\
     &
     p_4\equiv \lambda ab.
     \mathbf{i_t}(\bp(\i_s(\bp_0(\t_{\rm op'}(p_2ab))))(\bpn(\t_{\rm op'}(p_3ab)))) &
     &
     p_5\equiv
     \lambda ab.\i_{\gob}(g(p_0ab)_0)(g(p_1ab)_0)))
     \\
     & p_6\equiv \lambda ab.\bpo(\t_{\rm op'}(p_2ab)) &
     & p_7\equiv \lambda ab. \bpo(\t_{\rm op'}(p_3ab)) \\
     &\multispan3{$p_8\equiv
     \lambda ab.\mathbf{i_t}(\bp(p_7ab)(\i_t(\bp(p_6ab)(p_5ab))).$\hfill}
    \end{align*}
Then we have
     \begin{align*}
      & p_2ab\trel \underline{\langle\god,\goe\rangle}=
      \underline{\langle\goj,\mathfrak{k}\rangle}, &
      & p_3ab\trel \underline{\langle\god,\goe'\rangle}=
      \underline{\langle\goj',\mathfrak{k}'\rangle}, & \\
     & \bpn(\t_{\rm op'}(p_2ab))\trel \god=\goj, &
     & \bpn(\t_{\rm op'}(p_3ab))\trel \god=\goj', &
      & \i_s(\bpn(\t_{\rm op'}(p_2ab)))\trel \goj=\god, & \\
      & p_4ab\trel
      \goj=\goj', &
      & \i_{\goa}(p_0ab)_0(p_1ab)_0\trel \goj=\goj', & 
      & p_5ab\trel\mathfrak{k}=\mathfrak{k}', & \\ 
      & p_6ab\trel \goe=\mathfrak{k}, &
      & p_7ab\trel \goe'=\mathfrak{k}'&
      & p_8ab\trel \goe=\mathfrak{k}'. &
     \end{align*}
    Therefore $s''\equiv \lambda ab.
    \mathbf{i_t}(\bp(p_8ab)(\mathbf{i_s}(p_7ab)))$ satisfies
    $s''ab\trel \goe=\mathfrak{e'}$, when $a\trel
    \gof\in\goc$ and
    $b\trel \underline{\langle \god,\goe\rangle}\in \gof\land
    \underline{\langle \god,\goe'\rangle}\in\gof$.

    \smallskip

    Again by (\ref{**}), we have
    \begin{align*}
     \forall h\forall \goh(\langle h,\goh\rangle\in \gof^*\to
  \goh^{\circ}\in\gog^{\circ}\land
  \exists\goi(\langle ((a)_{10}h)_0,\goi\rangle\in \gog^*\land
  ((a)_{10}h)_1\trel \goh= \goi)),
    \end{align*}
    which implies
    \begin{multline*}
     \forall h\forall \goh(\langle h,\goh\rangle\in \gof^*\to
  \goh^{\circ}\in\gog^{\circ}\land
     \exists\god\exists \goe(
     \langle ((a)_{10}h)_0,\god\rangle\in \goa^*\land
     \langle g((a)_{10}h)_0,\goe\rangle\in \gob^*\land \\
     \langle ((a)_{10}h)_0,
     \underline{\langle \god,\goe\rangle}\rangle\in
     \gog^*\land 
     ((a)_{10}h)_{11}\trel \goh=
     \underline{\langle \god,\goe\rangle})).
    \end{multline*}
    Hence $s'''\equiv\lambda ah.\bp((a)_{10}h)_0(\bp(g((a)_{10}h)_0)((a)_{10}h)_{11})$ satisfies
    $s'''a\trel\forall w\in\gof\exists
    y\in \goa\exists z\in\gob
    (w=\langle y,z\rangle)$,
    when $a\trel \gof\in\goc$.
 
 \smallskip
 
 Therefore $s\equiv \lambda a.\bp(\bp(s'a)(\lambda b.s''ab))(s'''a)$ satisfies
 \begin{align*}
s\trel\gof\in\goc\to
 \forall y\in \goa\exists !z\in \gob(\langle
 y,z\rangle\in \gof) \land
  \forall w\in \gof\exists y\in\goa\exists z\in\gob
  (w=\langle y, z\rangle).
 \end{align*}

 \bigskip

 To construct $t$ such that
 \begin{align*}
  t\trel   \forall y\in \goa\exists !z\in \gob(\underline{\langle
 y,z\rangle}\in \gof \land
  \forall w\in \gof\exists y\in\goa\exists z\in\gob(w=\underline{\langle y,
 z\rangle})) \to \gof\in\goc,
 \end{align*}
    assume that $a\trel
    \forall y\in \goa\exists !z\in \gob(\underline{\langle
 y,z\rangle}\in \gof \land
  \forall w\in \gof\exists y\in\goa\exists z\in\gob(w=\underline{\langle y,
 z\rangle})).$
 Then, we have
 \begin{align}
  & (a)_{00}\trel\forall y\in\goa\exists
  z\in\gob(\underline{\langle y,z\rangle} \in\gof), \label{4}\\
  & (a)_{01}\trel\forall y\in\goa,\forall z,w\in\gob
  (\underline{\langle y,z\rangle}\in \gof\land
  \underline{\langle y,w\rangle}\in\gof\to
  y=w), \label{5}\\
  & (a)_1\trel \forall w\in \gof\exists y\in\goa\exists z\in\gob
  (w=\underline{\langle y,z\rangle}). \label{6}
 \end{align}
 (\ref{4}) implies
 \begin{align*}
  \forall d\forall \god(
  \langle d,\god\rangle\in \goa^*\to\exists
  \goe(\langle ((a)_{00}d)_0,\goe\rangle\in \gob^*
  \land ((a)_{00}d)_1\trel \underline{\langle
  \god,\goe\rangle}\in\gof)). \tag{\ref{4}$'$}\label{42}
 \end{align*}
 Set $g\equiv\lambda d. ((a)_{00}d)_0$.
    If $\langle d, \god\rangle\in \goa^*$,
    $\langle d', \goa^*\rangle\in \goa^*$ and
    ${\bf i}_{\goa}dd'\trel \god=\god'$,
    then there are $\goe$ and $\goe'$ such that
\begin{align*}
&     \langle gd, \goe\rangle\in\gob^* \land
 ((a)_{00}d)_1\trel \underline{\langle \god,\goe\rangle}
 \in\gof &
 & \text{and} &
&     \langle gd', \goe'\rangle\in\gob^* \land
 ((a)_{00}d')_1\trel \underline{\langle \god',\goe'\rangle}
 \in\gof.
\end{align*}
    By (\ref{5}) and ${\bf i}_{\goa}dd'\trel \god=\god'$, we have
    $b$ such that $b\trel \goe=\goe'$ and so
    ${\bf i}_{\gob}(gd)(gd')\trel\goe=\goe'$.
    Hence $g\in(\goa\Rightarrow\gob)$. 

    First we show 
    $\langle g,\langle \gof^{\circ},\hat{g}\rangle\rangle\in\goc^*$.
    It is enough to show $g^{\it real}=\{ \langle \god^{\circ},\goe^{\circ}\rangle:\exists d(\langle
    d,\underline{\langle\god,\goe\rangle}\rangle\in \hat{g})\}=\gof^{\circ}$.
    Take any $x\in \hat{g}$.
    Then $x=\langle d, \langle\god,\goe\rangle\rangle$ 
    for some $\langle d,\god\rangle\in \goa^*$ and
    $\langle e,\goe\rangle\in \gob^*$ such that $e=gd=((a)_{00}d)_{0}$.
    By (\ref{4}), we have $\goe'$ such that $\langle
    e,\goe'\rangle\in\gob^*\land
    ((a)_{00}d)_1\trel \underline{\langle \god,\goe'\rangle}\in
    \gof$.
    Since $\Psi(\gob)$ yields
    $\langle e, \goe\rangle\in \gob^* \land\langle
    e,\goe'\rangle\in\gob^*\to \goe=\goe'$,
    we have $\goe=\goe'$ and so
    $\first(\second(x))=\langle \god^{\circ},\goe^{\circ}\rangle$,
    which is in $\gof^{\circ}$.
    Conversly, for each $w\in\gof^{\circ}$, 
    there are $d$, $\god$ and $\goe$ such that $\langle d,\god\rangle\in \goa^*$,
    $\langle gd,\goe\rangle\in\gob^*$ and $w=\langle \god^{\circ},\goe^{\circ}\rangle$
    by (\ref{42}) and (\ref{6}) and so
  $\langle d,\underline{\langle\god,\goe\rangle}\rangle\in \hat{g}$.
 
    Set $\gog=\langle \gof^{\circ},\hat{g}\rangle$.
We show that
 there is an application term $t'$ such that $t'\trel
 \gof=\gog$.
 Assume $\langle h, \goh\rangle\in \gof^*$.
 By (\ref{6}), there are $\god$ and
    $\goe$ such that
    $\langle ((a)_1h)_0, \god\rangle\in\goa^*$, $\langle ((a)_{1}h)_{10},
    \goe\rangle\in\gob^*$ and 
    $((a)_1h)_{11}\trel \goh=\underline{\langle
    \god,\goe\rangle}$, which implies 
    $\bp h(\mathbf{i_s}((a)_1h)_{11})\trel\underline{\langle \god,\goe\rangle}\in \gof$.
By (\ref{4}), there is $\goe'$ such that
    $\langle g((a)_1h)_0,\goe'\rangle\in\gob^*$ and
    $((a)_{00}((a)_1h)_0)_1\trel
    \underline{\langle\god,\goe'\rangle}\in\gof$.
    Therefore we have $q_0agh\trel \goe=\goe'$, where
    \begin{align*}
     q_0\equiv\lambda a g h.(a)_{01}((a)_1h)_0((a)_1h)_{10}(g((a)_1h)_0)(\bp (\bp
    h(\mathbf{i_s}((a)_1h)_{11}))((a)_{00}((a)_1h)_0)_1),
    \end{align*}
    which implies
    \begin{align*}
     & q_1agh\trel
    \underline{\langle\god,\goe\rangle}=\underline{\langle \god,\goe'\rangle}&
     & q_2agh\trel
     \goh=\underline{\langle \god,\goe'\rangle},\;\text{where} \\
     & q_1\equiv\lambda agh.\t_{\rm op}(\bp \mathbf{i_r}(q_0agh)) &
     & q_2\equiv\lambda agh.
     \mathbf{i_t}(\bp ((a)_1h)_{11}(\t_{\rm op}(\bp
     \mathbf{i_r}(q_0agh)))).
    \end{align*}
    By the construction of $\hat{g}$, $\langle ((a)_1h)_0,
    \underline{\langle \god,\goe'\rangle}\rangle\in\gog^*$, 
    and so we have
    \begin{align*}
     \forall h\forall \goh(\langle h,\goh\rangle\in
\gof^{*}\to \bp ((a)_1h)_0(q_2agh)\trel \goh\in\gog).
    \end{align*}
Hence we have $\forall h\forall \goh(\langle h,\goh\rangle\in
\gof^{*}\to \exists \goi(\langle ((a)_1h)_0, \goi\rangle\in
 \gog\land ((a)_1h)_1\trel \goh=\goi))$.
 Next, assume that $\langle h, \goh\rangle\in \gog^*$.
 Since $\gog^*=\{ \langle d,\underline{\langle \god,
    \goe\rangle}\rangle:\langle d,\god\rangle\in\goa^*\land 
    \exists e(gd=e\land\langle e,\goe\rangle\in \gob^*\}$, we have
  $\goh=\underline{\langle \god,\goe\rangle}$ for some $\langle
    h,\god\rangle\in \goa^*$ and
 $\langle e,\goe\rangle\in\gob^*$ such that $gh=e$.
    Recall that $g=\lambda d.((a)_{00}d)_0$ and (\ref{42}).
    Take $\goe'$ such that
    $\langle ((a)_{00}h)_0,\goe'\rangle\in \gob^*
  \land ((a)_{00}h)_1\trel \underline{\langle
  \god,\goe'\rangle}\in\gof)$.
    Again by $\Psi(\gob)$, we have $\goe=\goe'$. 
 Then $\exists \goi(\langle ((a)_{00}h)_{10}, \goi\rangle\in
 \gof^* \land ((a)_{00}h)_{11}\trel \underline{\langle
 \god,\goe\rangle}= \goi)$.
 Hence $\forall h\forall \goh(\langle  h,\goh\rangle\in
 \gog^*\to \exists \goi(\langle ((a)_{00}h)_{10},
 \goi\rangle\in\gof^*\land
 ((a)_{00}h)_{11}\trel \goh= \goi)))$.
 Therefore $\bp(\lambda d.(a)_1d)(\lambda d.((a)_{00}d)_1)\trel
 \gof=\gog$.
    
 Set $t\equiv \lambda a.\bp (\lambda d.((a)_{00}d)_0)(\bp(\lambda d.(a)_1d)(\lambda d.((a)_{00}d)_1))$. Then
 \begin{align*}
  t \trel\forall y\in \goa\exists !z\in \gob(\underline{\langle
 y,z\rangle}\in \gof \land
  \forall w\in \gof\exists y\in\goa\exists z\in\gob(w=\underline{\langle y,
 z\rangle})) \to \gof\in\goc.
 \end{align*}
   \end{proof}

 \begin{lemma}\label{membership}
  For any $\goa$, $\gob$ in $\vns_{\mathcal{A}}$ and $a$ in $\mathcal{A}$,
  $\langle a,\goa\rangle\in\gob^*$ implies $\bp a{\bf i_r}\trel
  \goa\in\gob$.
 \end{lemma}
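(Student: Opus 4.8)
The plan is to unwind the realizability clause for membership from Definition \ref{real} and then discharge its two conjuncts by direct appeal to the lemmas already established, with no genuine computation required. Recall that $e\trel\goa\in\gob$ holds precisely when $\goao\in\gobo$ together with the existence of some $\goc$ satisfying $\paar{(e)_0,\goc}\in\gobs$ and $(e)_1\trel\goa=\goc$. Here I will instantiate $e$ as the given term $\bp a\mathbf{i_r}$ and, for the existential witness, take $\goc$ to be $\goa$ itself.

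The first step is to evaluate the projections of $\bp a\mathbf{i_r}$. By the pairing applicative axiom (Axiom 4 of $\appa$) one has $(\bp a\mathbf{i_r})_0\simeq a$ and $(\bp a\mathbf{i_r})_1\simeq\mathbf{i_r}$, since $(t)_i$ abbreviates $\mathbf p_i t$. For the first conjunct, note that the hypothesis $\paar{a,\goa}\in\gob^*$ is exactly of the shape required by Lemma \ref{fakt}, which yields $\goao\in\gobo$. For the existential conjunct, with $\goc\dfi\goa$ the condition $\paar{(\bp a\mathbf{i_r})_0,\goa}\in\gobs$ becomes $\paar{a,\goa}\in\gob^*$, which holds by assumption; and the condition $(\bp a\mathbf{i_r})_1\trel\goa=\goa$ reduces, via the projection computation above, to $\mathbf{i_r}\trel\goa=\goa$, which is precisely Lemma \ref{id}(1). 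Both conjuncts of the clause defining $\bp a\mathbf{i_r}\trel\goa\in\gob$ are therefore met.

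There is essentially no obstacle here: the statement is a direct bookkeeping verification that the canonical reflexivity realizer $\mathbf{i_r}$, paired with the index $a$ witnessing $\paar{a,\goa}\in\gob^*$, assembles into a realizer of $\goa\in\gob$. The only point demanding any care is making the projection identifications $(\bp a\mathbf{i_r})_0\simeq a$ and $(\bp a\mathbf{i_r})_1\simeq\mathbf{i_r}$ explicit before matching them against the membership clause, but these follow immediately from the combinator axioms. This completes the proof.
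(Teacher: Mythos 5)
Your proof is correct and follows the same route as the paper, which simply observes that the claim is immediate from the membership clause of Definition \ref{real}; you have merely made explicit the details the paper leaves implicit (the projection computation $(\bp a\mathbf{i_r})_0\simeq a$, $(\bp a\mathbf{i_r})_1\simeq\mathbf{i_r}$, the appeal to Lemma \ref{fakt} for $\goa^{\circ}\in\gob^{\circ}$, and Lemma \ref{id}(1) for $\mathbf{i_r}\trel\goa=\goa$).
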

 \begin{proof}
  It follows from the definition of $e\trel \goa\in\gob$.
 \end{proof}

For each $n\in \omega$, define $n_{\mathcal{A}}$ by
\begin{align*}
& 0_{\mathcal{A}}\equiv \mathbf{0} &
 & (n+1)_{\mathcal{A}}\equiv\mathbf{s}_{N}n_{\mathcal{A}}, &
\end{align*}
and let
\begin{align*}
 & \underline{n}=\langle n, \{\langle m_{\mathcal{A}}, \underline{m}\rangle:m<n\}\rangle &
 & \underline{\omega}=\langle \omega,\{ \langle n_{\mathcal{A}},\underline{n}\rangle:n\in\omega\}\rangle.
\end{align*}
\begin{lemma}[$\CZF$]\label{omega}
  There is $a$ such that
 \begin{align*}
a\trel\forall y[y\in \underline{\omega}\leftrightarrow \forall z(z\in
	    y\leftrightarrow \bot)\lor (\exists w\in \underline{\omega})\forall v(v\in
  y\leftrightarrow v\in w\lor v=w)].  
 \end{align*}
\end{lemma}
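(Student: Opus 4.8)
The statement asserts precisely that $\underline{\omega}$ realizes the Infinity axiom, so the plan is to verify it directly by unfolding Definition \ref{real}, following the verification of Infinity in the soundness proof (Proposition \ref{soundness}, cf. \cite{R05}) but keeping track of the explicit numeral-indexed structure of $\underline{\omega}$. Writing $\Phi(y)$ for the matrix $\forall z(z\in y\leftrightarrow\bot)\lor(\exists w\in\underline{\omega})\forall v(v\in y\leftrightarrow v\in w\lor v=w)$, by the clause for $\forall$ and by reading $\leftrightarrow$ as the conjunction of two implications, it suffices to produce two closed terms realizing, uniformly in $\goa\in\vns_{\mathcal{A}}$, the implications $\goa\in\underline{\omega}\to\Phi(\goa)$ and $\Phi(\goa)\to\goa\in\underline{\omega}$, and then pair them with $\bp$ and $\lambda$-abstract over the (realizer-invisible) variable. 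Throughout I will exploit the combinatorial facts that $\underline{n}^{\circ}=n$, $\underline{\omega}^{\circ}=\omega$, $\underline{n}^{*}=\{\langle m_{\mathcal{A}},\underline{m}\rangle:m<n\}$, $\underline{\omega}^{*}=\{\langle n_{\mathcal{A}},\underline{n}\rangle:n\in\omega\}$, and in particular the monotonicity $\underline{m}^{*}\subseteq\underline{m+1}^{*}$ together with $\underline{m+1}^{*}=\underline{m}^{*}\cup\{\langle m_{\mathcal{A}},\underline{m}\rangle\}$.

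For the forward implication, assume $e\trel\goa\in\underline{\omega}$. By the membership clause there is $\goc$ with $\langle(e)_0,\goc\rangle\in\underline{\omega}^{*}$ and $(e)_1\trel\goa=\goc$, so $(e)_0=n_{\mathcal{A}}$ and $\goc=\underline{n}$ for some $n\in\omega$. Using definition by cases $\bd$ I branch on whether $(e)_0=\zero$. If so, $n=0$ and $\goa$ equals the empty object $\underline{0}$ realizably, and transferring emptiness along $(e)_1$ (via the identity realizers of Lemma \ref{id}) yields a realizer of the left disjunct $\forall z(z\in\goa\leftrightarrow\bot)$. Otherwise $m_{\mathcal{A}}=\mathbf{p}_N(e)_0$ codes the predecessor and $\goa$ equals $\underline{m+1}$ realizably; I output the right disjunct with bounded witness $\underline{m}$ indexed by $m_{\mathcal{A}}$ (legitimate since $\langle m_{\mathcal{A}},\underline{m}\rangle\in\underline{\omega}^{*}$), the needed realizer of $\forall v(v\in\goa\leftrightarrow v\in\underline{m}\lor v=\underline{m})$ coming from the successor characterization below transported along $(e)_1$.

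For the backward implication, assume $e\trel\Phi(\goa)$ and branch on the disjunction tag $(e)_0\in\{\zero,\one\}$. If $(e)_0=\zero$ then $(e)_1$ witnesses emptiness of $\goa$, whence $\goao=\emptyset=\underline{0}^{\circ}$ by Lemma \ref{Q1.3} and Extensionality, $\goa=\underline{0}$ is realizable, and pairing the index $0_{\mathcal{A}}=\zero$ with that equality realizer gives $\goa\in\underline{\omega}$. If $(e)_0=\one$, the bounded existential clause yields $\gob$ with $\langle((e)_1)_0,\gob\rangle\in\underline{\omega}^{*}$, hence $((e)_1)_0=m_{\mathcal{A}}$ and $\gob=\underline{m}$, while $((e)_1)_1$ realizes the successor property of $\goa$ relative to $\underline{m}$; the successor characterization then produces a realizer of $\goa=\underline{m+1}$, which paired with the index $(m+1)_{\mathcal{A}}=\mathbf{s}_N m_{\mathcal{A}}\in\underline{\omega}^{*}$ realizes $\goa\in\underline{\omega}$.

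The crux of both directions is a \emph{successor characterization}: a single closed term converting, uniformly in the numeral $m_{\mathcal{A}}$, between a realizer $c$ of $\forall v(v\in\goa\leftrightarrow v\in\underline{m}\lor v=\underline{m})$ and a realizer of $\goa=\underline{m+1}$. Here the monotonicity $\underline{m}^{*}\subseteq\underline{m+1}^{*}$ lets membership realizers for $\underline{m}$ be reused verbatim for $\underline{m+1}$, while the single extra pair $\langle m_{\mathcal{A}},\underline{m}\rangle$ is dispatched by testing an index against $m_{\mathcal{A}}$ with $\bd$ and invoking reflexivity $\mathbf{i_r}$ together with Lemma \ref{membership} (so that $\bp j_{\mathcal{A}}\mathbf{i_r}\trel\underline{j}\in\underline{m}$ for $j<m$); the two membership directions of the equality clause are assembled by $\lambda$-abstraction over the index, feeding the appropriate disjunct-realizer into $(c)_1$ and reading off $(c)_0$, with $\mathbf{i_s},\mathbf{i_t}$ gluing the resulting equalities. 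I expect this uniform-in-$m_{\mathcal{A}}$ bookkeeping, rather than any conceptual difficulty, to be the main obstacle, since all the set-theoretic ``truth'' side conditions (that $\goao\in\omega$, that $\goao=m+1$, and the like) follow routinely from Lemma \ref{Q1.3} and the elementary theory of $\omega$ in $\CZF$.
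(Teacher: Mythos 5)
Your proposal is correct and takes essentially the same route as the paper: the paper's own proof is simply the citation ``See \cite[Theorem 6.1 (Infinity)]{R05}'', and your argument is an explicit in-line reconstruction of exactly that verification---unfolding the clauses of Definition \ref{real} for the numeral-indexed $\underline{\omega}$, branching with $\bd$ on numerals and disjunction tags, and converting uniformly in $m_{\mathcal{A}}$ between realizers of $\goa=\underline{m+1}$ and realizers of the successor formula. The one step you leave implicit---that in the empty case the first conjunct of the equality clause for $\goa=\underline{0}$ holds vacuously, since $\goa^{\circ}=\emptyset$ forces $\goa^{*}$ to contain no pairs by Lemma \ref{fakt}---is precisely the routine bookkeeping you anticipate.
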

\begin{proof}
 See \cite[Theorem 6.1 (Infinity)]{R05}.
\end{proof}
Recall that each finite type is coded by a natural number and we do not distinguish a type
$\sigma$ and its code.
By $\underline{\sigma}$, we mean $\underline{n}$ for the code $n$ of a type $\sigma$.
\begin{lemma}[$\CZF$]\label{type1}
 For each finite type $\sigma$, there is $\underline{\bo{\sigma}}$ such that
$\Psi(\underline{\bo{\sigma}})$ and
$\exists a(a\trel \phi(\underline{\sigma}, \underline{\bo{\sigma}}))$.
 \begin{proof}
  This is proved by induction on the type structure.
  For $\sigma=0$, we can prove $\Psi(\underline{\omega})$ as follows.
  It is clear that $\underline{\omega}^{\circ}=\omega$.
  For each $a$, $\goc$ and $\god$, if $\langle a, \goc\rangle\in
  (\underline{\omega})^*\land\langle a,\god\rangle\in(\underline{\omega})^*$, then there
  is $n\in\omega$ such that $a=n_{\mathcal{A}}$ and so
  $\goc=\god=\underline{n}$
  by the construction of $\underline{\omega}$, which implies $\goc=\god$.
  Assume that $b\trel\underline{n}=\underline{m}$ for some $n$, $m\in\omega$ and
  $b\in|\mathcal{A}|$.
  Then $(\underline{n})^{\circ}=(\underline{m})^{\circ}$ and so
  $\underline{n}=\underline{m}$.
  Therefore $c=\bp(\lambda x.x)(\lambda x.x)$ satisfies
  $c\trel \underline{n}=\underline{m}$.
  By Lemma \ref{type} and Lemma \ref{omega}, we have $a$ such that
  $a\trel\phi({\underline{0}}, \underline{\omega})$.
  
  Next we prove the induction step.
  Assume that $\sigma\equiv\arrowtype{\tau}{\rho}$.
 By the induction hypothesis, we have $\underline{\bo{\tau}}$, $\underline{\bo{\rho}}$,
  $p$ and $q$ such that
  \begin{align*}
   & \Psi(\underline{\bo{\tau}}, \bo{\tau})  &
   & p\trel \phi({\tau}, \underline{\bo{\tau}}); \\
   & \Psi(\underline{\bo{\rho}}, \bo{\rho})  &
   & q\trel \phi({\rho}, \underline{\bo{\rho}}).
  \end{align*}
  By Lemma \ref{mainlemma}, we can take $\underline{\arrowtype{\bo{\tau}}{\bo{\rho}}}$
  satisfying 
  $\Psi(\underline{\bo{\tau}^{\bo{\rho}}})$.
  Set $\underline{\bo{\sigma}}=\underline{{\bo{\tau}}^{\bo{\rho}}}$.
 Then there is $r$ such that $r\trel \forall
 w(w\in\underline{\bo{\sigma}}\leftrightarrow
 w\in\Func(\underline{\bo{\tau}},\underline{\bo{\rho}}))$.
 Since $\CZF$ proves that $\forall x\forall y
 (\phi(\tau, x)\land\phi(\tau, y)\to x=y)$ and
 $\forall x\forall y
 (\phi(\rho,x)\land\phi(\rho,y)\to x=y)$ by Lemma \ref{type} and
 that $\forall x\forall y\forall x'\forall y'(x=x'\land y=y'\to \forall
 w(w\in\Func(x,y)\leftrightarrow w\in\Func(x', y')))$,
 we can construct a realizer for   
 $\phi(\underline{\sigma}, \underline{\bo{\sigma}})$.
 \end{proof}

\end{lemma}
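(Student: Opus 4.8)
The plan is to prove the lemma by induction on the structure of the finite type $\sigma$, at each stage constructing the name $\underline{\bo{\sigma}}\in\vns_{\mathcal{A}}$ explicitly and verifying separately the two assertions: that $\Psi(\underline{\bo{\sigma}})$ holds, and that some $a\trel\phi(\underline{\sigma},\underline{\bo{\sigma}})$. The induction is driven entirely by Lemma \ref{mainlemma}, which supplies the function-space name in the successor case, so the residual work is to treat the base case by hand and to convert the internal function-space realizer of Lemma \ref{mainlemma} into a realizer of the external defining formula $\phi$.

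For the base case $\sigma=0$ I take $\underline{\bo{0}}:=\underline{\omega}$. To check $\Psi(\underline{\omega})$, I verify the injectivity clause first: by construction every element of $(\underline{\omega})^{*}$ has the shape $\langle n_{\mathcal{A}},\underline{n}\rangle$, so the index $n_{\mathcal{A}}$ pins down the entry uniquely. For the canonical-equality clause, I note that if $b\trel\underline{n}=\underline{m}$ then $(\underline{n})^{\circ}=(\underline{m})^{\circ}$ by Lemma \ref{Q1.3}, hence $n=m$; thus a single fixed term such as $\bp(\lambda x.x)(\lambda x.x)$ realizes every true equality between numerals, providing the required uniform realizer $b$. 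A realizer for $\phi(\underline{0},\underline{\omega})$ is then assembled from Lemma \ref{omega}, which realizes the defining clauses of $\underline{\omega}$, together with Lemma \ref{type}, which identifies $\bo{0}$ with $\omega$; the self-equality component is handled by $\mathbf{i_r}$ from Lemma \ref{id}.

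For the inductive step $\sigma\equiv\arrowtype{\tau}{\rho}$, the induction hypothesis yields names $\underline{\bo{\tau}},\underline{\bo{\rho}}$ with $\Psi(\underline{\bo{\tau}})$, $\Psi(\underline{\bo{\rho}})$ and realizers $p,q$ for $\phi(\tau,\underline{\bo{\tau}})$ and $\phi(\rho,\underline{\bo{\rho}})$. Applying Lemma \ref{mainlemma} to $\goa=\underline{\bo{\tau}}$ and $\gob=\underline{\bo{\rho}}$ produces a name $\goc$ with $\Psi(\goc)$ and a realizer $r$ for $\forall w(w\in\goc\leftrightarrow w\in\Func(\underline{\bo{\tau}},\underline{\bo{\rho}}))$; I set $\underline{\bo{\sigma}}:=\goc$, which already settles the first assertion. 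For the second, I exploit that $\CZF$ proves, through Lemma \ref{type}, both the uniqueness of the finite-type sets and the extensionality of the function-space construction in its arguments, namely $\forall x\forall y\forall x'\forall y'(x=x'\land y=y'\to\forall w(w\in\Func(x,y)\leftrightarrow w\in\Func(x',y')))$. Since these are theorems of $\CZF$, the Soundness Theorem (Proposition \ref{soundness}) furnishes realizers for them, and I combine those with $p$, $q$ and $r$ to realize $\phi(\underline{\sigma},\underline{\bo{\sigma}})$.

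The main obstacle I anticipate is this last conversion in the inductive step. The formula $\phi(\sigma,x)$ does not merely assert that $x$ is the function space; it asserts the existence of a staging function $y=\mathrm{func}(\sigma+1,V)$ that correctly enumerates $\bo{\rho}$ for every subtype $\rho\le\sigma$ and places $\langle\sigma,x\rangle$ at the top. Thus realizing $\phi(\underline{\sigma},\underline{\bo{\sigma}})$ requires exhibiting an internal witness for this staging function and threading the realizers $p,q$ through the closure clause, rather than simply invoking $r$. The delicate bookkeeping is to pin the staging witness down using the $\CZF$-provable uniqueness (so that the internally existential $y$ is forced to agree with the one built from $\underline{\bo{\tau}}$ and $\underline{\bo{\rho}}$); once uniqueness and extensionality are realized via Proposition \ref{soundness}, the remaining assembly is routine combinator plumbing with $\mathbf{i_t}$, $\mathbf{i_s}$ and pairing.
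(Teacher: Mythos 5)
Your proposal is correct and follows essentially the same route as the paper: induction on types, with the base case $\underline{\bo{0}}:=\underline{\omega}$ verified clause-by-clause for $\Psi$ (injective indexing by numerals $n_{\mathcal{A}}$, the fixed term $\bp(\lambda x.x)(\lambda x.x)$ as canonical equality realizer), and the successor case handled entirely by Lemma \ref{mainlemma} together with the $\CZF$-provable uniqueness from Lemma \ref{type}, extensionality of $\Func$, and Proposition \ref{soundness} to convert the function-space realizer $r$ into one for $\phi(\underline{\sigma},\underline{\bo{\sigma}})$. Your closing paragraph on the staging function $y=\mathrm{func}(\sigma+1,V)$ in fact spells out a detail the paper's own proof leaves implicit, so it is a faithful (if anything, slightly more careful) rendering of the same argument.
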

\begin{lemma}
   Let $\varphi(x)$ be a formula whose unique free variable is $x$.
   Then, for any finite type $\sigma$,
 $\CZF\vdash \varphi(\bo{\sigma})$ 
   implies $\CZF\vdash \exists a(a\trel \varphi(\underline{\bo{\sigma}}))$.
\end{lemma}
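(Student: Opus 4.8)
The plan is to reduce the claim to an application of the Soundness Theorem (Proposition \ref{soundness}) combined with the realizer for the defining property of $\underline{\bo{\sigma}}$ supplied by Lemma \ref{type1}. First I would rewrite the hypothesis so that it speaks about the characterizing formula $\phi$ rather than the defined term $\bo{\sigma}$. By Lemma \ref{type} the term $\bo{\sigma}$ is a genuine definite description: $\CZF$ proves $\phi(\sigma,\bo{\sigma})$ together with $\forall x\forall y(\phi(\sigma,x)\land\phi(\sigma,y)\to x=y)$. Hence from $\CZF\vdash\varphi(\bo{\sigma})$ I obtain $\CZF\vdash\forall x(\phi(\sigma,x)\to\varphi(x))$, since any $x$ with $\phi(\sigma,x)$ is provably equal to $\bo{\sigma}$ and therefore satisfies $\varphi$.

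Next I would feed this theorem into the Soundness Theorem. Since $\sigma$ is a concrete finite type, its code is a fixed numeral and the soundness construction is \emph{effective}, so there is a closed application term $t$ with $\CZF\vdash t\trel\forall x(\phi(\sigma,x)\to\varphi(x))$, where in the realized statement the numeral $\sigma$ is represented by the object $\underline{\sigma}\in\vns_{\mathcal{A}}$ (note $\underline{\sigma}^{\circ}=\sigma$). Unwinding the realizability clause for $\forall$ and instantiating the realizability variable at $\underline{\bo{\sigma}}$ gives $t\trel(\phi(\underline{\sigma},\underline{\bo{\sigma}})\to\varphi(\underline{\bo{\sigma}}))$; unwinding the clause for $\to$ this yields in particular $\forall f(f\trel\phi(\underline{\sigma},\underline{\bo{\sigma}})\to tf\trel\varphi(\underline{\bo{\sigma}}))$.

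Finally, Lemma \ref{type1} provides $\CZF\vdash\exists a_0(a_0\trel\phi(\underline{\sigma},\underline{\bo{\sigma}}))$. Fixing such an $a_0$ and using totality of $\mathcal{A}$ (so that $ta_0\downarrow$), the implication just obtained gives $ta_0\trel\varphi(\underline{\bo{\sigma}})$, whence $\exists a(a\trel\varphi(\underline{\bo{\sigma}}))$ with witness $a=ta_0$. The step that requires the most care is the matching in the second paragraph: I must be sure that the first argument occurring in the antecedent realized by the soundness term $t$ is exactly the object $\underline{\sigma}$ furnished by Lemma \ref{type1}, rather than the bare numeral $\sigma$. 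This is where I would invoke $\underline{\sigma}^{\circ}=\sigma$ together with the congruence terms $\i_{\varphi}$ of Lemma \ref{id}, which transport a realizer of $\phi$ along a realized equality and so let me convert freely between the numeral and its canonical representative before applying $t$.
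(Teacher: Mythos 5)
Your proposal is correct and takes essentially the same route as the paper's own proof: reduce $\CZF\vdash\varphi(\bo{\sigma})$ to $\CZF\vdash\forall x(\phi(\sigma,x)\to\varphi(x))$ via the uniqueness statement of Lemma \ref{type}, obtain a realizer $t$ of this universal implication from the Soundness Theorem (Proposition \ref{soundness}), and apply it to the realizer of $\phi(\underline{\sigma},\underline{\bo{\sigma}})$ supplied by Lemma \ref{type1}, so that $ta_0\trel\varphi(\underline{\bo{\sigma}})$. The additional care you devote to matching the numeral $\sigma$ with its canonical name $\underline{\sigma}$ is detail the paper leaves implicit, but it does not alter the argument.
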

  \begin{proof}
   Assume $\varphi(x)$ is a formula whose unique free variable is $x$.
   If $\CZF\vdash\varphi(\bo{\sigma})$, then we have
   $\CZF\vdash\forall x(\phi(\sigma,x)\to \varphi(x))$.
   By Proposition \ref{soundness} and Lemma \ref{type1}, we have $t$ and $s$ such that
   $t\trel\forall x(\phi(\underline{\sigma},x)\to \varphi(x))$ and $s\trel
   \phi(\underline{\sigma}, \underline{\bo{\sigma}})$.
   Hence we have $ts\trel\varphi(\underline{\bo{\sigma}})$. 
  \end{proof}

  \begin{lemma}\label{0}
   For any finite type $\sigma$, $\CZF\vdash\exists x(x\in\bo{\sigma})$.
  \end{lemma}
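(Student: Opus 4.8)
The plan is to argue by induction on the structure of the finite type $\sigma$, exactly in the spirit of the proofs of Lemma \ref{type} and Lemma \ref{type1}. Since $\Type$ is generated inductively \emph{outside} the language of $\CZF$, this is a metatheoretic induction that, for each concrete $\sigma$, produces a single $\CZF$-proof of $\exists x\,(x\in\bo{\sigma})$. So there is no need to internalize the induction over types; I only need a base case and an inductive step, each carried out inside $\CZF$ for the relevant sets, which exist by Lemma \ref{type}.

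For the base case $\sigma=0$ we have $\bo{0}=\{y:\langle 0,y\rangle\in\FT\}=\omega$. The defining axiom for $\omega$ gives $\emptyset\in\omega$ (indeed $\emptyset=\emptyset\lor\ldots$ makes $\emptyset$ the first element), so $\bo{0}$ is inhabited and $\exists x\,(x\in\bo{0})$ holds.

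For the inductive step I would write $\sigma=\arrowtype{\tau}{\rho}$, so that by Lemma \ref{type} (and the clause for $v^u$ in the definition of $\phi$) the set $\bo{\sigma}$ is $\Func(\bo{\tau},\bo{\rho})$, the collection of set functions with domain $\bo{\tau}$ and codomain $\bo{\rho}$. The key observation is that, by the induction hypothesis applied to $\rho$, the \emph{codomain} $\bo{\rho}$ is inhabited; fix $b$ with $b\in\bo{\rho}$. The plan is then to exhibit the constant function with value $b$, namely $\bo{\tau}\times\{b\}=\{\langle u,b\rangle:u\in\bo{\tau}\}$, and to verify that it lies in $\Func(\bo{\tau},\bo{\rho})$. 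This object is a set in $\CZF$ because Cartesian products of sets are available (via Pairing, Union and Bounded Separation), it has domain exactly $\bo{\tau}$, it is single-valued, and every second coordinate lies in $\bo{\rho}$; hence it is a total function from $\bo{\tau}$ to $\bo{\rho}$ and therefore witnesses $\exists x\,(x\in\bo{\sigma})$.

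I do not expect a genuine obstacle here. The argument uses only that the codomain is inhabited, so whether or not $\bo{\tau}$ is inhabited is irrelevant (were $\bo{\tau}$ empty, the empty function would already serve). The only points deserving care are identifying which component of $\arrowtype{\tau}{\rho}$ plays the role of the codomain, so that the induction hypothesis is invoked on the correct subtype, and checking that the constant function is genuinely a set and a function in $\CZF$; both are routine once products of sets are in hand.
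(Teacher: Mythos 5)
Your proposal is correct and follows essentially the same route as the paper: a metatheoretic induction on the type structure, with base case $\emptyset\in\omega=\bo{0}$ and inductive step given by the constant function with value an inhabitant of the codomain (the paper writes this as $\lambda x^{\sigma}.0_{\tau}\in\bo{\sigma}\to\bo{\tau}$, which is exactly your $\bo{\tau}\times\{b\}\in\Func(\bo{\tau},\bo{\rho})$). Your additional care in checking that the constant function is a set in $\CZF$ and that the induction hypothesis is applied to the codomain only makes explicit what the paper leaves implicit.
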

  \begin{proof}
   This is proved by induction on types.
   For $\sigma=0$, $\emptyset\in\omega=\bo{\sigma}$.
   Assume that $0_{\tau}\in\bo{\tau}$.
   Then $\lambda x^{\sigma}.0_{\tau}\in\bo{\sigma}\to\bo{\tau}$.
  \end{proof}
  
  In the proof of 1 of the following theorem, the totality of the applicative structure is crucial.
  
  \begin{theorem}\label{theorem}
   Let $\T$ be a set theory such that Proposition \ref{soundness} holds and let
   $\varphi(\vec{y})$, $\psi(x,\vec{y})$ and $\theta(\vec{y})$ be formulae whose free variables are all
   displayed. 
   \begin{enumerate}
    \item\label{theorem1}   If $\T\vdash \forall \vec{y}(\neg \varphi(\vec{y})\to(\exists
	 x\in\bo{\sigma})\psi(x, \vec{y}))$, 
   then $\T\vdash \exists x\forall\vec{y}(\neg\varphi(\vec{y})\to
   x\in\bo{\sigma}\land\psi(x,\vec{y}))$.
    \item\label{theorem2} If $\T\vdash\forall \vec{y}[\forall
	 z\theta(\vec{y},z)\to \exists x\in\bo{\sigma}\psi(x,\vec{y})]$ and
	 $\T\vdash\forall \vec{y}\forall z(\theta(\vec{y},z)\lor\neg\theta(\vec{y},z))$,
	 then $\T\vdash \exists x\in\bo{\sigma}\forall \vec{y}[\forall
	 z\theta(\vec{y},z)\to\psi(x,\vec{y})]$. 
    \item\label{theorem3}
   If $\T\vdash \forall \vec{y}(\neg\varphi(\vec{y})\to\exists
   x\in\bo{\sigma}\psi(x,\vec{y}))$ and 
   $\T\vdash \exists \vec{y}\neg \varphi(\vec{y})$,
   then $\T\vdash \exists x\in\bo{\sigma}\forall
	     \vec{y}(\neg\varphi(\vec{y})\to\psi(x,\vec{y}))$.
   \end{enumerate}
  \end{theorem}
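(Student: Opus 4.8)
The plan is to reduce all three parts to one extraction principle: from a realizer over $\vns_{\mathcal{A}}$ one reads off a \emph{single} realizing index, and the canonical indexing of finite types (the predicate $\Psi$) turns that index into an honest set in $\bo{\sigma}$. I begin with part~\ref{theorem1}. Writing $\Xi(w)\equiv\forall\vec y(\neg\varphi(\vec y)\to(\exists x\in w)\psi(x,\vec y))$, the hypothesis is $\T\vdash\Xi(\bo{\sigma})$. Arguing as in the (unnumbered) lemma preceding Lemma~\ref{0}, i.e. via the Soundness Theorem (Proposition~\ref{soundness}) together with Lemma~\ref{type1}, I obtain a closed term $t$ with $\T\vdash t\trel\Xi(\underline{\bo{\sigma}})$. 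Unfolding the clauses for $\forall$, $\to$ and the bounded $\exists$, and using that negations are self-realized by $\zero$ (Lemma~\ref{negself}), I get the following, working inside $\T$: for every $\goc\in\vns_{\mathcal{A}}$, if $\neg\varphi(\goc^{\circ})$ then $\zero\trel\neg\varphi(\goc)$, whence $t\zero\trel(\exists x\in\underline{\bo{\sigma}})\psi(x,\goc)$, so there is $\gob$ with $\langle (t\zero)_0,\gob\rangle\in(\underline{\bo{\sigma}})^{*}$ and, by Lemma~\ref{Q1.3}, $\psi(\gob^{\circ},\goc^{\circ})$. Here totality of $\mathcal{A}$ is essential: it guarantees $t\zero$ is defined, so $b_0:=(t\zero)_0$ is a fixed element of $|\mathcal{A}|$ \emph{independent of} $\goc$. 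Since every set $y$ is named by $\langle y,\emptyset\rangle\in\vns_{\mathcal{A}}$ with $\langle y,\emptyset\rangle^{\circ}=y$ (Lemma~\ref{stets}(iii)), specializing $\goc$ to such names gives $\forall\vec y\,(\neg\varphi(\vec y)\to\exists\gob(\langle b_0,\gob\rangle\in(\underline{\bo{\sigma}})^{*}\wedge\psi(\gob^{\circ},\vec y)))$.

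The crucial point is that $b_0$ is a \emph{single} index, and $\Psi(\underline{\bo{\sigma}})$ (Lemma~\ref{type1}) forces the element it names to be unique, so the set $W:=\{\gob^{\circ}:\langle b_0,\gob\rangle\in(\underline{\bo{\sigma}})^{*}\}$ has at most one element. For part~\ref{theorem1} I set $x:=\bigcup W$; this is a total definition requiring no case distinction. To verify it, take $\vec y$ with $\neg\varphi(\vec y)$: the displayed fact makes $W$ inhabited, hence $W=\{\gob^{\circ}\}$ and $x=\gob^{\circ}\in\bo{\sigma}$ (using $\gob^{\circ}\in(\underline{\bo{\sigma}})^{\circ}=\bo{\sigma}$ by Lemma~\ref{fakt}) with $\psi(x,\vec y)$; and if $\neg\varphi(\vec y)$ never holds the implication is vacuous, so $x$ need not even lie in $\bo{\sigma}$. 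This yields $\exists x\,\forall\vec y(\neg\varphi(\vec y)\to x\in\bo{\sigma}\wedge\psi(x,\vec y))$.

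Parts~\ref{theorem2} and~\ref{theorem3} reuse $b_0$ and $W$ but must place $x\in\bo{\sigma}$ \emph{outside} the quantifier, so they need $W$ provably inhabited. In part~\ref{theorem3} the extra hypothesis $\T\vdash\exists\vec y\,\neg\varphi(\vec y)$ supplies a set witnessing inhabitedness of $W$, so $x:=\bigcup W\in\bo{\sigma}$ and, by uniqueness from $\Psi(\underline{\bo{\sigma}})$, every $\vec y$ with $\neg\varphi(\vec y)$ yields the same $\gob^{\circ}=x$ with $\psi(x,\vec y)$. For part~\ref{theorem2} I first convert the universal premise into a negation: the decidability hypothesis gives $\T\vdash\forall\vec y(\forall z\,\theta(\vec y,z)\leftrightarrow\neg\exists z\,\neg\theta(\vec y,z))$, so $\forall z\,\theta$ is self-realized by a fixed term $r=c\zero$, where $c$ realizes the $\leftarrow$ direction (via Soundness) and $\zero$ realizes the negation (Lemma~\ref{negself}); with $b_0:=(tr)_0$ this puts part~\ref{theorem2} into the shape of part~\ref{theorem3}. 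To secure the \emph{unconditional} $x\in\bo{\sigma}$ without deciding satisfiability of the premise, I realize the decidability itself: a realizer $d$ of $\forall\vec y\forall z(\theta\vee\neg\theta)$ has $(d)_0$ fixed, so the $\vee$-clause together with $\zero\neq\one$ forces $\T\vdash\forall\vec y\forall z\,\theta\vee\forall\vec y\forall z\,\neg\theta$. In the first case the premise always holds, $W$ is inhabited, and $x:=\bigcup W\in\bo{\sigma}$ works; in the second case the premise is refutable and any $d_0\in\bo{\sigma}$ (Lemma~\ref{0}) works vacuously.

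The main obstacle is precisely the passage from a realizing \emph{index} to a set witness that is provably in $\bo{\sigma}$: realizability returns only $b_0$ and the \emph{conditional} existence of the element it names. Two points carry the weight. First, extracting one premise-independent element rests on the uniqueness built into $\Psi(\underline{\bo{\sigma}})$ and on totality keeping $t\zero$ (respectively $tr$) defined. Second, discharging the vacuous case intuitionistically: this is handled for part~\ref{theorem1} by the harmless $x=\bigcup W$, for part~\ref{theorem3} by the extra premise, and for part~\ref{theorem2} by the genuinely \emph{provable} disjunction obtained from realizing decidability, which supplies a legitimate case split. The remaining work — unfolding the realizability clauses and checking that $W$ is a set and a singleton — is routine bookkeeping, while the conceptual content lies entirely in these two points.
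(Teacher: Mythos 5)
Your proposal is correct and follows essentially the same route as the paper: soundness plus Lemma \ref{type1} give the realizer $t$, totality of $\mathcal{A}$ makes $(t\zero)_0$ (resp.\ $(tr)_0$) a fixed index, the uniqueness clause in $\Psi(\underline{\bo{\sigma}})$ extracts from it a single witness (your $\bigcup W$ is precisely the first component $\goc^{\circ}$ of the paper's $\goc$), and part \ref{theorem2} rests on the same case distinction on the fixed flag $(s)_0$ of the decidability realizer, with Lemma \ref{0} covering the vacuous branch. Your only deviations are cosmetic: you pass to truth pointwise via Lemma \ref{Q1.3} and the canonical names $\langle y,\emptyset\rangle$ where the paper constructs a realizer of the whole conclusion and applies Lemma \ref{Q1.3} once, and your part-\ref{theorem2} detour through $\forall z\,\theta\leftrightarrow\neg\exists z\,\neg\theta$ is sound but unnecessary, since on the branch $(s)_0=\zero$ the term $(s)_1$ already realizes the premise at every name.
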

  \begin{proof}
   Assume that $\T$ is a set theory and $\varphi(\vec{y})$, $\psi(x,\vec{y})$ and
   $\theta(\vec{y})$ are formulas as they are stated.
Fix a total applicative structure $\mathcal{A}$ such that both $|\mathcal{A}|$ and the
   graph $\{\langle \langle a,b\rangle,c\rangle\in|\mathcal{A}|^{3}:\appo(a,b,c)\}$ are sets in $\T$, such as
   $\mathcal{RE}(\omega)$. 

   \ref{theorem1}.
   Assume $\T\vdash\forall \vec{y}(\neg\varphi(\vec{y})\to(\exists
   x\in\bo{\sigma})\psi(x,\vec{y}))$. 

   We reason in $\T$.
   By Lemma \ref{type1},
   we have an application term $t$ such that $t\trel\forall\vec{\goa}(
   \neg\varphi[\vec{y}/\vec{\goa}]\to
   (\exists x\in\underline{\bo{\sigma}}\psi[x,\vec{y}/\vec{\goa}]))$.
     Note that $(t0)_0{\downarrow}$ and $(t0)_1{\downarrow}$ because of the totality of
   $\mathcal{A}$.
   Take the following $c$.
\begin{align*}
 c=\langle \bigcup\{ \gob^{\circ}:
   \langle (t\mathbf{0})_0, \gob\rangle\in (\underline{\bo{\sigma}})^*\},
   \bigcup\{ \gob^*:\langle (t\mathbf{0})_0,
   \gob\rangle\in(\underline{\bo{\sigma}})^*\}\rangle.
\end{align*}
   
   Then, for each $x\in \second(c)$,
   there is $\gob$ such that
   $x\in \gob^*$ and $\langle (t\mathbf{0})_0, \gob\rangle\in
   (\underline{\bo{\sigma}})^*$, which implies 
   $\first(\second(x))\in \first(c)$.
   Hence $c\in\vns_{\mathcal{A}}$. Let $\goc=c$.
   By Lemma \ref{negself}, if $a\trel \neg\varphi[\vec{y}/\vec{\goa}]$,
   then $\mathbf{0}\trel\neg\varphi[\vec{y}/\goa]$ and so
   $t\mathbf{0}\trel (\exists x\in\underline{\bo{\sigma}})\psi[x,\vec{y}/\goa]$,
   which implies $\exists\gob(\langle
   (t\mathbf{0})_0,\gob\rangle\in(\underline{\bo{\sigma}})^*\land 
   (t\mathbf{0})_1\trel \psi[x/\gob,\vec{y}/\vec{\goa}])$.
   By the definition of $\underline{\bo{\sigma}}$,
   $\gob$ such that $\langle
   (t\mathbf{0})_0,\gob\rangle\in(\underline{\bo{\sigma}})^*$ is unique.
   Hence $\goc=\langle \bigcup \{ \gob^{\circ}\},
   \bigcup\{ \gob^*\}\rangle=\gob$ for such $\gob$.
   Therefore
   $\bp (t\mathbf{0})_0\mathbf{i_r}\trel\goc\in\underline{\bo{\sigma}}$
   by Lemma \ref{membership} and
   $(t\mathbf{0})_1\trel \psi[x/\goc,\vec{y}/\vec{\goa}]$.
   Let $s\equiv\lambda a.\bp(\bp (t\mathbf{0})_0\mathbf{i_r})(t\mathbf{0})_1$,
   where $a$ is a fresh variable.
   Then $\forall\vec{\goa}(s\trel \neg\varphi[\vec{y}/\vec{\goa}]\to
   \goc\in\underline{\bo{\sigma}}\land
   \psi[x/\goc, \vec{y}/\vec{\goa}])$ and so we
   have $s\trel\exists x\forall \vec{y}(\neg\varphi(\vec{y})\to
   x\in\underline{\bo{\sigma}}\land\psi(x,\vec{y}))$.
   By Lemma \ref{Q1.3}, we have $\exists x\forall \vec{y}(\neg\varphi(\vec{y})\to
   x\in\underline{\bo{\sigma}}\land \psi(x,\vec{y}))$.

   \ref{theorem2}.
   Assume that $\T\vdash\forall \vec{y}[\forall
	 z\theta(\vec{y},z)\to \exists x\in\bo{\sigma}\psi(x,\vec{y})]$ and
	 $\T\vdash\forall \vec{y}\forall z(\theta(\vec{y},z)\lor\neg\theta(\vec{y},z))$.

   We reason in $\T$.
   By Proposition \ref{soundness}, there are application terms $t$ and $s$ such that
   $t\trel \forall \vec{y}[\forall
	 z\theta(\vec{y},z)\to \exists x\in\underline{\bo{\sigma}}\psi(x,\vec{y})]$ and
   $s\trel\forall \vec{y}\forall z(\theta(\vec{y},z)\lor\neg\theta(\vec{y},z))$.
   Then $(s)_0=\mathbf{0}$ or $(s)_1=\mathbf{1}$.

   If $(s)_0=\mathbf{0}$, then we have $(s)_1\trel\forall z\theta(\vec{y}/\vec{\goa},z)$
   and so there is $\goc$ such that $\langle (t(s)_1)_0, \goc\rangle\in(\underline{\bo{\sigma}})^*\land
   (t(s)_1)_1\trel \psi(\goc,\vec{\goa})$
    for any $\vec{\goa}$.
   Therefore $\lambda a.(t(s)_1)_1\trel \forall \vec{y}(\forall
   z\varphi(\vec{y},z)\to\psi(\goc,\vec{y}))$ and so
   \begin{align*}
    \bp(t(s)_1)_0(\lambda a.(t(s)_1)_1)\trel
    \exists x\in\underline{\bo{\sigma}}
    (\forall z\varphi(\vec{y},z)\to\psi(\goc,\vec{y}))
   \end{align*}
   which implies $\exists x \in
    \bo{\sigma}(\forall \vec{y}(\forall z\varphi(\vec{y},z)\to \psi(\goc,\vec{y})))$.

   If $(s)_0=\mathbf{1}$,  then we have $(s)_1\trel\neg\theta(\vec{y}/\vec{\goa},z/\god)$
    for any $\vec{\goa}$ and $\god$.
   Hence, for any $\vec{\goa}$, there is no $a$ such that $a\trel \forall
   z\theta(\vec{y}/\vec{\goa}, z)$.
   By Lemma \ref{0}, there is $\langle b, \gob\rangle\in\underline{\bo{\sigma}}$.
   Then we have
   $\lambda a.0\trel \forall \vec{y}(\forall z\varphi(\vec{y},z)\to \psi(\gob,\vec{y}))$
   and so
   \begin{align*}
    \bp b(\lambda a.0)\trel\exists x\in\underline{\bo{\sigma}}
    \forall \vec{y}(\forall z\varphi(\vec{y},z)\to \psi(\gob,\vec{y})),
   \end{align*}
   which implies
   $\exists x\in\bo{\sigma}
   \forall \vec{y}(\forall z\varphi(\vec{y},z)\to \land\psi(\goc,\vec{y}))$.
   
   \ref{theorem3}.
    Assume that $\T\vdash \forall \vec{y}(\neg\varphi(\vec{y})\to\exists
   x\in\bo{\sigma}\psi(x,\vec{y}))$ and 
   $\T\vdash \exists \vec{y}\neg \varphi(\vec{y})$.

 We reason in $\T$.
By Proposition \ref{soundness}
   there are application terms $t$ and $s$ such that $t\trel\neg\varphi[\vec{y}/\vec{\goa}]\to 
   \exists x\in\underline{\bo{\sigma}}\psi[x, \vec{y}/\vec{\goa}]$
   for any $\vec{\goa}$ and
   that $s\trel\neg\varphi[\vec{y}/\vec{\god}]$.
  Hence we have
   $\mathbf{0}\trel \neg\varphi[\vec{y}/\vec{\god}]$ by Lemma \ref{negself}.
   This ensures that there is $\goc$ such that $\langle
   (t0)_0,\goc\rangle\in(\underline{\bo{\sigma}})^*$ and so
   $\exists \goc(\langle (t0)_0,\goc\rangle\in \underline{\bo{\sigma}}^*\land
   \lambda a.(t0)_1\trel\forall \vec{y}(\neg\varphi[\vec{y}/\vec{\goa}]\to \psi[x/\goc,
   \vec{y}/\vec{\goa}]))$.
   The last implies $\T\vdash \exists x\forall \vec{y}(\varphi(\vec{y})\to\psi(x,\vec{y}))$.
  \end{proof}

\section{Open problems}
We conclude the paper with some open problems.
\begin{problem}[Independence of Premise Rule in general]
Is the following independence of premise rule an admissible rule of $\CZF$ or any other familiar constructive/intuitionistic set
 theory $\T$?
 \begin{quote}
  If $\T\vdash \neg\varphi\to \exists x\psi(x)$,
  then $\T\vdash\exists x(\neg\varphi\to\psi(x))$,
  where $\varphi$ and $\psi$ have no free variables other than displayed, and 
  where $x$ is not free in $\varphi$.
 \end{quote}
 In Theorem \ref{theorem}.\ref{theorem1}, we proved it in the case in which $\exists x$ is
bounded by some finite type.
We do not know yet how whether this generalizes to other bounded for $\exists x$ or whether we can
 even remove it.
 The key to generalize this bound seems to be to construct a (total) PCA which injectively
 represents each element of the bound, like in Lemma \ref{mainlemma}.
\end{problem}

\begin{problem}[Choice Rule, $\ACR$]
 In \cite[3.7.5]{troelstra73}, it was shown that $\mathbf{HA}^{\omega}$ is closed under
 the choice rule for finite type (dubbed ACR there), i.e., 
 \begin{quote}
  If $\mathbf{HA}^{\omega}\vdash \forall x^{\sigma}\exists y^{\tau}\varphi(x,y)$, then
  $\mathbf{HA}^{\omega}\vdash\exists 
  z^{\arrowtype{\sigma}{\tau}}\forall x^{\sigma}\varphi(x,zx)$.
 \end{quote}
 We expect that the set theories we treat in this paper are also closed under $\ACR$, but
 we do not know yet if we can prove it with 
the model of PCA in this paper but problems with extensionality one faces seem to render
 it highly unlikely. 
\end{problem}

\paragraph{Acknowledgement.}
The first author thanks the Japan Society for the Promotion of Science (JSPS), Core-to-Core
Program (A. Advanced Research Networks) for supporting the research.
The second author's research was supported by a grant from the John Templeton Foundation
(``A new dawn of intuitionism: mathematical and philosophical advances," ID 60842).\footnote{The opinions expressed in this publication are those of the authors and do not necessarily reflect the views of the John Templeton Foundation.}


\end{document}